\documentclass[a4paper,reqno,10pt]{amsart}

%\usepackage{pb-diagram}
%\tolerance=1000
%\hbadness=10000
\raggedbottom
\hfuzz3pt
\usepackage{epsf,graphicx,epsfig}
\usepackage{amscd}
\usepackage{amsmath,latexsym,amssymb,amsthm}
\usepackage[nospace,noadjust]{cite}
\usepackage{textcomp}
\usepackage{setspace,cite}
\usepackage{lscape,fancyhdr,fancybox}
\usepackage{stmaryrd}
\usepackage[all,cmtip]{xy}
\usepackage{tikz}
\usepackage{cancel}
\usetikzlibrary{shapes,arrows,decorations.markings}
%\usepackage[hmarginratio=1:1, vmarginratio =5:6,
%textheight=22cm,bindingoffset=1.6cm, textwidth=14.6cm]{geometry}
\setlength{\unitlength}{0.4in}

\usepackage{graphicx}

\usepackage{color}
\usepackage{url}
\usepackage{enumerate}
\usepackage[mathscr]{euscript}
%\usepackage{showkeys}
%\input xy
%\xyoption{all}

\setlength{\topmargin}{-0.5in}
\setlength{\textheight}{9.8in}
\setlength{\oddsidemargin}{-0.1in}
\setlength{\evensidemargin}{-.1in}
\setlength{\textwidth}{6.4in}

  \theoremstyle{plain}

\swapnumbers
    \newtheorem{thm}{Theorem}[section]
    \newtheorem{prop}[thm]{Proposition}
     
   \newtheorem{lemma}[thm]{Lemma}
    \newtheorem{corollary}[thm]{Corollary}
    
    \newtheorem{subsec}[thm]{}
\theoremstyle{definition}
    \newtheorem{defn}[thm]{Definition}
        \newtheorem{remark}[thm]{Remark}
    \newtheorem{exam}[thm]{Example}

\theoremstyle{remark}

\title{}
\author{}
\date{}
\usepackage{amssymb}

\usepackage{hyperref}
\hypersetup{
	colorlinks,
	citecolor=blue,
	filecolor=black,
	linkcolor=blue,
	urlcolor=black
}

\begin{document}

\title{On $\mathcal{O}$-operators on modules over Lie algebras}

\author{Apurba Das}
%\address{Department of Mathematics and Statistics,
%Indian Institute of Technology, Kanpur 208016, Uttar Pradesh, India.}
%\email{apurbadas348@gmail.com}

\maketitle

\begin{center}
Department of Mathematics and Statistics,\\
Indian Institute of Technology, Kanpur 208016, Uttar Pradesh, India.\\
Email: apurbadas348@gmail.com
\end{center}

%\curraddr{}
%\email{}

%\subjclass[2010]{}
%\keywords{}

\begin{abstract}
The notion of $\mathcal{O}$-operators on modules over Lie algebras generalize Rota-Baxter operators. They also generalize Poisson structures on Lie algebras in the presence of modules. Motivated from Poisson structures, we define gauge transformations and reductions of $\mathcal{O}$-operators. Next we consider compatible $\mathcal{O}$-operators on modules over Lie algebras. We define $\mathcal{ON}$-structures which give rise to hierarchy of compatible $\mathcal{O}$-operators. We show that a solution of the strong Maurer-Cartan equation on a twilled Lie algebra associated to an $\mathcal{O}$-operator gives rise to an $\mathcal{ON}$-structure, hence, a hierarchy of compatible $\mathcal{O}$-operators. Finally, we also introduce generalized complex structures and holomorphic $\mathcal{O}$-operators on modules over Lie algebras and show how they incorporate $\mathcal{O}$-operators.\\
\end{abstract}

\noindent {\sf 2020 MSC classification:} 17B10, 17B56, 17B38, 17B40

\noindent {\sf Keywords:} $\mathcal{O}$-operators, Gauge transformations, Reductions, Compatible $\mathcal{O}$-operators, $\mathcal{ON}$-structures, Generalized complex structures, Holomorphic $\mathcal{O}$-operators.

\thispagestyle{empty}

%\tableofcontents

\vspace{0.2cm}

\section{Introduction}
The notion of Rota-Baxter operators on associative algebras was introduced by G. Baxter \cite{baxter} and G.-C. Rota \cite{rota} in 1960's in their study of the fluctuation theory of probability and combinatorics. In last twenty years, Li Guo made significant contributions in Rota-Baxter algebras. See for instance \cite{guo-keigher,guo-book}. More precisely, a Rota-Baxter operator (of weight $0$) on an associative algebra $A$ is a linear map $R : A \rightarrow A$ that satisfies $R(a)R(b) = R ( R(a)b + a R(b))$, for $a, b \in A$. Rota-Baxter operators are algebraic abstraction of integral operators. An importance of these operators are shown by Connes and Kreimer in the algebraic approach of renormalization in quantum field theory \cite{connes-kreimer}. Such operators are also useful in the study of dendriform algebras and splitting of operads \cite{aguiar}. Rota-Baxter operators can also defined in a Lie algebra \cite{bai,bai-guo-ni}. Let $(\mathfrak{g},[~,~])$ be a Lie algebra. A linear map $R : \mathfrak{g} \rightarrow \mathfrak{g}$ is called a Rota-Baxter operator (of weight $0$) if $R$ satisfies
\begin{align*}
[R(x), R(y)] = R ( [R(x), y] - [R(y), x]), ~ \text{ for } x, y \in \mathfrak{g}. 
\end{align*}

\medskip

The notion of generalized Rota-Baxter operators on bimodules over associative algebras was introduced by K. Uchino motivated from Poisson structures \cite{uchino}. Their Maurer-Cartan characterizations, cohomology and deformation theory are studied in \cite{das}. 
%When the bimodule is given by the algebra itself, one get Rota-Baxter operators. 
Generalized Rota-baxter operators in the context of Lie algebras was previously appeared in the work of Kupershmidt by the name of $\mathcal{O}$-operators \cite{kuper}. Let $(\mathfrak{g}, [~,~])$ be a Lie algebra and $(M, \bullet)$ be $\mathfrak{g}$-module. A linear map $T : M \rightarrow \mathfrak{g}$ is called an $\mathcal{O}$-operator on $M$ over $\mathfrak{g}$ if it satisfies
\begin{align*}
[T(m), T(n)] = T ( T(m) \bullet n - T(n) \bullet m), ~ \text{ for }m, n \in M. 
\end{align*}
It turns out that $M$ carries a Lie algebra structure with bracket $[m,n]^T := T(m) \bullet n - T(n) \bullet m.$

\medskip

In this paper, we study $\mathcal{O}$-operators in the context of Lie algebras from Poisson geometric perspectives. In Section \ref{sec-prelim} we first recall the Chevalley-Eilenberg cohomology (CE cohomology) of Lie algebras and Nijenhuis operators.
Section \ref{sec-o-op} begins with $\mathcal{O}$-operators. Given an $\mathcal{O}$-operator $T$ and a `suitable' $1$-cocycle $B: \mathfrak{g} \rightarrow M$ in the CE cohomology of $\mathfrak{g}$ with coefficients in $M$, we construct a new $\mathcal{O}$-operator $T_B : M \rightarrow \mathfrak{g}$, called the gauge transformation of $T$ by $B$. This construction is inspired from the gauge transformation of Poisson structures introduced by \v{S}evera and Weinstein \cite{sev-wein} (see also \cite{das0}). The $\mathcal{O}$-operators $T$ and $T_B$ induce isomorphic Lie algebra structures on $M$ (Proposition \ref{iso-m-mt}). In the next, we generalize the Marsden-Ratiu Poisson reduction theorem \cite{mars-ratiu} to $\mathcal{O}$-operators. Given an $\mathcal{O}$-operator $T : M \rightarrow \mathfrak{g}$, a subalgebra $\mathfrak{h} \subset \mathfrak{g}$ and a suitable subspace $E \subset \mathfrak{g}$, we construct under certain conditions, a new $\mathcal{O}$-operator over the Lie algebra $\mathfrak{h} / (E \cap \mathfrak{h})$ (cf. Theorem \ref{mr-main}).

\medskip

In the classical formulation of biHamiltonian mechanics, Poisson structures come up with Nijenhuis tensors suitably compatible with Poisson structures \cite{magri-morosi}. Such structures are called Poisson-Nijenhuis (PN) structures \cite{ks-magri}. It turns out that there is a hierarchy of compatible Poisson structures. These notions and subsequent results has been extended to the context of associative $\mathcal{O}$-operators by introducing $\mathcal{ON}$-structures \cite{sheng}. In Section \ref{sec-comp-o} we first introduce compatible $\mathcal{O}$-operators on modules over Lie algebras and study its relation with associated (pre-)Lie structures. In Section \ref{sec-on} we study Poisson-Nijenhuis structures in the context of $\mathcal{O}$-operators on Lie algebras. 
%Let $T : M \rightarrow \mathfrak{g}$ be an $\mathcal{O}$-operator. We introduce Nijenhuis structure on $M$ over the Lie algebra $\mathfrak{g}$. 
A Nijenhuis structure on $M$ over $\mathfrak{g}$ consists of a pair $(N, S)$ of linear maps $N \in \mathrm{End}(\mathfrak{g})$ and $S \in \mathrm{End}(M)$ that generates an infinitesimal deformation of the dual $\mathfrak{g}$-module $M^*$ (Definition \ref{defn-nij-str}). We introduce $\mathcal{ON}$-structures on $M$ over $\mathfrak{g}$ as a triple $(T,N,S)$ in which $T$ is an $\mathcal{O}$-operator, $(N, S)$ a Nijenhuis structure on $M$ over $\mathfrak{g}$ satisfying some compatibility relations (Definition \ref{defn-on-str}). We show that for each $k \geq 0$, the linear maps $ T_k := N^k \circ T : M \rightarrow \mathfrak{g}$ are $\mathcal{O}$-operators which are pairwise compatible (Theorem \ref{final-thm-comp-o}). 
%We end this section by defining a quasi analog of $\mathcal{ON}$-structure (which we call $\mathcal{O}q\mathcal{N}$-structute) generalizing Poisson quasi-Nijenhuis manifolds \cite{stienon-xu}.

\medskip

In the next, we consider strong Maurer-Cartan equation in a twilled Lie algebra (matched pair of Lie algebras). We show that a solution of the strong MC equation in a twilled Lie algebra induces an $\mathcal{ON}$-structure (Theorem \ref{final-thm-strong}), hence, a hierarchy of compatible $\mathcal{O}$-operators (Corollary \ref{cor-strong-mc}). Conversely, we prove that an $\mathcal{ON}$-structure in which the $\mathcal{O}$-operator is invertible induces a solution of the strong Maurer-Cartan equation in a certain twilled Lie algebra (Theorem \ref{thm-strong-last}).

\medskip

In \cite{hitchin} Hitchin introduced a notion of generalized complex structure unifying both symplectic and complex structures. A generalized complex structure has an underlying Poisson structure. Motivated from this, in Section \ref{sec-gcs}, we introduce generalized complex structure on $M$ over the Lie algebra $\mathfrak{g}$ as a linear map $J : \mathfrak{g} \oplus M \rightarrow \mathfrak{g} \oplus M$ of the form
\begin{align*}
J = \left( \begin{array}{cc}
N    &      T\\
\sigma     &    -S
\end{array}
\right),
\end{align*}
satisfying $J^2 = - \mathrm{id}$ and some integrability condition (Definition \ref{defn-gcs-o}). In Theorem \ref{gcs-o-char}, we gave a characterization of a generalized complex structure $J$ in terms of its structure components. 
%This enables us to show that a generalized complex structure has an $\mathcal{O}q\mathcal{N}$-structure.

\medskip

In Section \ref{sec-holo-o}, we introduce holomorphic r-matrices as Lie algebra analog of holomorphic Poisson structures \cite{geng-st-xu}. Finally, using a characterization of holomorphic r-matrices, we end this paper by introducing holomorphic $\mathcal{O}$-operators. Deformations of Lie algebra $\mathcal{O}$-operators are studied in \cite{tang-et} from cohomological perspectives. In a forth coming paper, we aim to study cohomology and deformations of holomorphic $\mathcal{O}$-operators motivated from holomorphic Poisson geometry.

\medskip

All vector spaces and linear maps in this paper are over a field of characteristic $0$ unless otherwise stated.
 
\section{Lie algebras and Nijenhuis tensors}\label{sec-prelim}

Let $\mathfrak{g} = (\mathfrak{g}, [~, ~])$ be a Lie algebra. A $\mathfrak{g}$-module (also called a representation of $\mathfrak{g}$) consists of a vector space $M$ together with a bilinear map (called the action)
\begin{center}
$\bullet : \mathfrak{g} \times M \rightarrow M, ~~~~ (x, m ) \mapsto x \bullet m  \quad \text{ satisfying }$
\end{center}
\begin{center}
$[x, y] \bullet m = x \bullet ( y \bullet m ) - y \bullet (x \bullet m), ~ \text{ for } x, y \in \mathfrak{g}, m \in M.$
\end{center}

Thus it follows that the Lie algebra $\mathfrak{g}$ is a module over itself with the action given by $x \bullet y = [x, y]$, for $x, y \in \mathfrak{g}$. It is called the adjoint representation of $\mathfrak{g}$. The dual vector space $\mathfrak{g}^*$ also carries a $\mathfrak{g}$-module structure with the action given by $\langle x \bullet \alpha, y \rangle = - \langle \alpha, [x, y] \rangle$, for $x, y \in \mathfrak{g}$ and $\alpha \in \mathfrak{g}^*$.

Let $(\mathfrak{g}, [~,~])$ be a Lie algebra and $(M, \bullet)$ be a $\mathfrak{g}$-module. Then the direct sum $\mathfrak{g} \oplus M$ carries a Lie bracket
\begin{align}\label{semi-dir-brkt}
[ (x,m), (y, n)] := ( [x,y] , x \bullet n - y \bullet m),
\end{align}
for $ (x,m), (y, n) \in \mathfrak{g} \oplus M$. This is called the semi-direct product and often denoted by $\mathfrak{g} \ltimes M$.

Let $\mathfrak{g}$ be a Lie algebra and $(M, \bullet)$ be a $\mathfrak{g}$-module. The Chevalley-Eilenberg (CE) cohomology of $\mathfrak{g}$ with coefficients in $M$ is given by the cohomology of the cochain complex $\big(  C^\ast_{\mathrm{CE}} (\mathfrak{g}, M), \delta_{\mathrm{CE}}   \big)$ where $C^n_{\mathrm{CE}} (\mathfrak{g}, M) := \mathrm{Hom} (\wedge^n \mathfrak{g}, M)$, for $n \geq 0$ and $\delta_{\mathrm{CE}} :  C^n_{\mathrm{CE}} (\mathfrak{g}, M) \rightarrow  C^{n+1}_{\mathrm{CE}} (\mathfrak{g}, M)$ given by
\begin{align*}
(\delta_{\mathrm{CE}} f) (x_1, \ldots, x_{n+1}) =~& \sum_{i=1}^{n+1} (-1)^{i+1}~ x_i \bullet f (x_1, \ldots, \widehat{ x_i}, \ldots, x_{n+1}) \\
~&+ \sum_{i < j } (-1)^{i+j} f ( [x_i, x_j], x_1, \ldots, \widehat{ x_i}, \ldots, \widehat{ x_j}, \ldots, x_{n+1} ),
\end{align*}
for $f \in  C^n_{\mathrm{CE}} (\mathfrak{g}, M)$ and $x_1, \ldots, x_{n+1} \in \mathfrak{g}.$

\begin{defn}
Let $(\mathfrak{g}, [~,~])$ be a Lie algebra. A Nijenhuis operator on $\mathfrak{g}$ is a linear map $N :\mathfrak{g} \rightarrow \mathfrak{g}$ satisfying
\begin{align*}
[Nx, Ny] = N ( [Nx, y] + [x, Ny] - N [x,y]), ~ \text{for } x, y \in \mathfrak{g}.
\end{align*}
\end{defn}
If $N$ is a Nijenhuis operator on $\mathfrak{g}$, then the deformed bracket
\begin{align*}
[x,y]_N := [Nx, y] + [x, Ny] - N [x,y]
\end{align*}
is a new Lie bracket on $\mathfrak{g}$ and $N : (\mathfrak{g}, [~, ~]_N) \rightarrow (\mathfrak{g}, [~, ~])$ is a morphism of Lie algebras.

We have more interesting results about Nijenhuis operators \cite{ks-magri}.
\begin{prop} Let $N$ be  a Nijenhuis operator on the Lie algebra $\mathfrak{g}$. Then for all $k, l \in \mathbb{N}$,
\begin{itemize}
\item[(i)] $N^k$ is a Nijenhuis operator on $\mathfrak{g}$, hence, $(\mathfrak{g}, [~, ~]_{N^k})$ is a Lie algebra.
\item[(ii)] $N^l$ is a Nijenhuis operator on the Lie algebra $(\mathfrak{g}, [~,~]_{N^k})$. Moreover, the deformed brackets $([~,~]_{N^k})_{N^l}$ and $[~, ~]_{N^{k+l}}$ coincide, Hence $N^l$ is a Lie algebra morphism from $(\mathfrak{g}, [~, ~]_{N^{k+l}} )$ to $(\mathfrak{g} , [~, ~]_{N^k})$.
\item[(iii)] The Lie brackets $[~,~]_{N^k}$ and $[~,~]_{N^l}$ on $\mathfrak{g}$ are compatible in the sense that any linear combinations of them is also a Lie bracket on $\mathfrak{g}$. 
\end{itemize}
\end{prop}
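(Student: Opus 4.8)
The plan is to reduce all three parts to one bookkeeping identity for powers of $N$. First I would establish that, for a Nijenhuis operator $N$ on $\mathfrak{g}$ and all integers $i, j \geq 0$,
\begin{align*}
[N^i x, N^j y] = N^i [x, N^j y] + N^j [N^i x, y] - N^{i+j}[x,y], \qquad x, y \in \mathfrak{g}.
\end{align*}
The cases $i = 0$ or $j = 0$ are trivial, and $i = j = 1$ is exactly the defining relation of a Nijenhuis operator. The general case (with $i, j \geq 1$) follows by induction on $i + j$: evaluating the vanishing Nijenhuis torsion of $N$ on the pair $(N^{i-1}x, N^{j-1}y)$ rewrites $[N^i x, N^j y]$ as a combination of brackets $[N^a x, N^b y]$ with $a + b < i + j$, to which the inductive hypothesis applies, and collecting terms yields the claim. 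I expect this induction --- essentially careful index bookkeeping --- to be the only place where a nontrivial computation is required; everything else is formal.

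Granting the identity, part (i) is immediate: the Nijenhuis torsion of $N^k$ evaluated on $(x,y)$ is precisely $[N^k x, N^k y] - N^k[N^k x, y] - N^k[x, N^k y] + N^{2k}[x,y]$, which vanishes by the identity with $i = j = k$. Hence $N^k$ is a Nijenhuis operator and $(\mathfrak{g}, [~,~]_{N^k})$ is a Lie algebra. For part (iii), I would use that the assignment $A \mapsto [~,~]_A$, where $[x,y]_A := [Ax, y] + [x, Ay] - A[x,y]$, is linear in the operator $A$; hence $a\,[~,~]_{N^k} + b\,[~,~]_{N^l} = [~,~]_{aN^k + bN^l}$ for scalars $a, b$. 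Now $aN^k + bN^l = P(N)$ for a polynomial $P$, and the Nijenhuis torsion of $P(N) = \sum_i a_i N^i$ expands as $\sum_{i,j} a_i a_j \big( [N^i x, N^j y] - N^i[x, N^j y] - N^j[N^i x, y] + N^{i+j}[x,y] \big)$, which vanishes term by term by the fundamental identity. Thus $P(N)$ is a Nijenhuis operator and $[~,~]_{aN^k + bN^l}$ is a Lie bracket (skew-symmetry and bilinearity being automatic), giving the asserted compatibility.

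For part (ii), I would first verify the bracket identity $([~,~]_{N^k})_{N^l} = [~,~]_{N^{k+l}}$ by expanding the left-hand side with the deformation formula and collecting terms, each simplification again being an instance of the fundamental identity; the same kind of expansion shows that the Nijenhuis torsion of $N^l$ computed with respect to the bracket $[~,~]_{N^k}$ vanishes, i.e. $N^l$ is a Nijenhuis operator on $(\mathfrak{g}, [~,~]_{N^k})$. The morphism statement then needs no further computation: by the general fact recalled just before the proposition, if $A$ is a Nijenhuis operator for a Lie bracket $\mu$ on $\mathfrak{g}$, then $A : (\mathfrak{g}, \mu_A) \to (\mathfrak{g}, \mu)$ is a Lie algebra morphism, where $\mu_A$ denotes the corresponding deformed bracket; applying this with $\mu = [~,~]_{N^k}$ and $A = N^l$, and using $([~,~]_{N^k})_{N^l} = [~,~]_{N^{k+l}}$, yields exactly that $N^l : (\mathfrak{g}, [~,~]_{N^{k+l}}) \to (\mathfrak{g}, [~,~]_{N^k})$ is a morphism of Lie algebras.

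The main obstacle is purely organizational rather than conceptual: one must keep the index bookkeeping in the fundamental identity and in the two expansions of part (ii) under control. Isolating that single identity at the outset is what makes parts (i)--(iii) fall out mechanically, so I would invest the care there and treat the remaining expansions as routine.
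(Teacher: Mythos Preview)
Your approach is correct, and the fundamental identity you isolate,
\[
[N^i x, N^j y] = N^i [x, N^j y] + N^j [N^i x, y] - N^{i+j}[x,y],
\]
is exactly the standard tool for this proposition. The induction you sketch goes through, and the deductions of (i)--(iii) from it are all valid; in particular your linearity observation $a[~,~]_{N^k} + b[~,~]_{N^l} = [~,~]_{aN^k + bN^l}$ for part (iii), and the reduction of the morphism statement in (ii) to the general fact about Nijenhuis operators stated just before the proposition, are clean.

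There is nothing to compare against in the paper itself: the proposition is stated there as a recalled result with a citation to Kosmann-Schwarzbach--Magri \cite{ks-magri} and is not proved. Your write-up is essentially the argument one finds in that reference, so you have supplied what the paper omits rather than diverged from it.
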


\section{$\mathcal{O}$-operators}\label{sec-o-op}
In this section, we first recall $\mathcal{O}$-operators and some basic properties of that \cite{bai,bai-guo-ni}. Then we define gauge transformations and reductions of $\mathcal{O}$-operators.

\begin{defn}\label{defn-o-op}
Let $\mathfrak{g}$ be a Lie algebra and $(M, \bullet)$ be a $\mathfrak{g}$-module. An $\mathcal{O}$-operator on $M$ over the Lie algebra $\mathfrak{g}$ is a linear map $T : M \rightarrow \mathfrak{g}$ satisfying
\begin{align*}
[T(m), T(n)] = T (T(m) \bullet n - T(n) \bullet m), ~ \text{for } m, n \in M.
\end{align*}
\end{defn}

Let $T$ be an $\mathcal{O}$-operator on $M$ over $\mathfrak{g}$. Then $M$ carries a Lie algebra structure with bracket
\begin{align}\label{new-brkt}
[m,n]^T := T(m) \bullet n - T(n) \bullet m, ~ \text{for } m, n \in M.
\end{align}
We denote this Lie algebra by $M^T$. Moreover, $\mathrm{ker}(T) \subset M^T$ is a subalgebra, called the isotropy subalgebra. This is in fact an ideal. The image of $T$, $\mathrm{im}(T) \subset \mathfrak{g}$ is also a subalgebra.

\begin{prop}\label{o-char}
A linear map $T: M \rightarrow \mathfrak{g}$ is an $\mathcal{O}$-operator on $M$ over $\mathfrak{g}$ if and only if the graph
\begin{align*}
\mathrm{Gr} (T) := \{ (T(m), m) | m \in M \} \subset \mathfrak{g} \oplus M
\end{align*}
is a subalgebra of the semi-direct product $\mathfrak{g} \ltimes M.$
\end{prop}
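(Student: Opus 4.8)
The plan is to unravel both conditions and observe that they are literally the same equation. First I would take an arbitrary element $(T(m), m) \in \mathrm{Gr}(T)$ together with another element $(T(n), n) \in \mathrm{Gr}(T)$, and compute their bracket in $\mathfrak{g} \ltimes M$ using the semi-direct product formula \eqref{semi-dir-brkt}. This gives
\begin{align*}
[(T(m), m), (T(n), n)] = \big( [T(m), T(n)], \; T(m) \bullet n - T(n) \bullet m \big).
\end{align*}
The subspace $\mathrm{Gr}(T)$ is a subalgebra of $\mathfrak{g} \ltimes M$ precisely when, for all $m, n \in M$, this bracket again lies in $\mathrm{Gr}(T)$, i.e. its first component is $T$ applied to its second component. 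That is to say, $\mathrm{Gr}(T)$ is a subalgebra if and only if
\begin{align*}
[T(m), T(n)] = T\big( T(m) \bullet n - T(n) \bullet m \big), \quad \text{for all } m, n \in M,
\end{align*}
which is exactly the defining identity of an $\mathcal{O}$-operator from Definition \ref{defn-o-op}.

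For the forward direction, I would assume $T$ is an $\mathcal{O}$-operator; then the displayed computation shows the bracket of two elements of $\mathrm{Gr}(T)$ has first component equal to $T$ of its second component, hence lies in $\mathrm{Gr}(T)$, so $\mathrm{Gr}(T)$ is closed under the bracket and is therefore a subalgebra. For the converse, I would assume $\mathrm{Gr}(T)$ is a subalgebra; then for any $m, n$ the bracket $[(T(m),m),(T(n),n)]$ lies in $\mathrm{Gr}(T)$, which by definition of the graph forces $[T(m),T(n)] = T(T(m)\bullet n - T(n)\bullet m)$, so $T$ is an $\mathcal{O}$-operator.

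There is essentially no obstacle here: the only thing to be slightly careful about is the bookkeeping that an element of $\mathfrak{g}\oplus M$ belongs to $\mathrm{Gr}(T)$ if and only if its first coordinate is the image under $T$ of its second coordinate, and that the map $m \mapsto (T(m),m)$ is a linear isomorphism of $M$ onto $\mathrm{Gr}(T)$, so that ranging over all pairs of graph elements is the same as ranging over all pairs $m, n \in M$. Once this identification is made, the equivalence is immediate from the single bracket computation above.
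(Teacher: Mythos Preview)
Your proof is correct and is exactly the standard argument; the paper in fact states this proposition without proof, presumably because the computation you carry out is considered routine. Your careful remark that $m \mapsto (T(m),m)$ is a linear isomorphism onto $\mathrm{Gr}(T)$, so that ranging over graph elements is the same as ranging over $m,n \in M$, is the only subtlety and you handle it properly.
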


%\begin{prop}
%Let $\mathfrak{g}$ be a Lie algebra and ${\bf r} \in \wedge^2 \mathfrak{g}$. Then ${\bf r}$ is a classical r-matrix (i.e. satisfies $[{\bf r} ,{\bf r} ] = 0$) if and only if ${\bf r}^\sharp : \mathfrak{g}^* \rightarrow \mathfrak{g}, ~ \alpha \mapsto {\bf r} (\alpha, ~)$ is an $\mathcal{O}$-operator on the coadjoint module $\mathfrak{g}^*$ over the Lie algebra $\mathfrak{g}$.
%\end{prop}

Let $\mathfrak{g} = (\mathfrak{g} , [~, ~])$ be a Lie algebra. Then one can extend the Lie bracket on $\mathfrak{g}$ to the full exterior algebra $\wedge^\ast \mathfrak{g} = \oplus_{n \geq 0} \wedge^n \mathfrak{g}$ by the following rules
\begin{align*}
&[P, Q] = -(-1)^{(p-1)(q-1)}[Q, P],\\
&[P, Q \wedge R ] = [P, Q] \wedge R + (-1)^{(p-1)q} Q \wedge [P, R], \text{ for } P \in \wedge^p \mathfrak{g}, ~ Q \in \wedge^q \mathfrak{g} \text{ and } R \in \wedge^r \mathfrak{g}.
\end{align*}

\begin{defn}
An element ${\bf r} \in \wedge^2 \mathfrak{g}$ is called a classical r-matrix (or a solution of the classical Yang-Baxter equation) if ${\bf r}$ satisfies $[{\bf r}, {\bf r}] = 0$. 
\end{defn}

%Classical r-matrices are related to $\mathcal{O}$-operators in the following way.

Classical r-matrices are Lie algebra analog of Poisson structures \cite{kuper}. There is a close connection between classical r-matrices and $\mathcal{O}$-operators.

\begin{lemma}\label{lemma-r-mat}
An element ${\bf r} \in \wedge^2 \mathfrak{g}$ is a classical r-matrix if and only if the induced map ${\bf r}^\sharp : \mathfrak{g}^* \rightarrow \mathfrak{g}, ~ \alpha \mapsto {\bf r} (\alpha, ~)$ is an $\mathcal{O}$-operator on the coadjoint representation $\mathfrak{g^*}$ over the Lie algebra $\mathfrak{g}$.
\end{lemma}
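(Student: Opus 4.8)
The plan is to unwind both sides of the claimed equivalence in coordinates on $\mathfrak{g}^*$ and match the classical Yang-Baxter expression $[\mathbf{r},\mathbf{r}]$ against the defect of ${\bf r}^\sharp$ being an $\mathcal{O}$-operator on the coadjoint module. First I would fix notation: for $\mathbf{r}\in\wedge^2\mathfrak{g}$ write ${\bf r}^\sharp:\mathfrak{g}^*\to\mathfrak{g}$ by $\langle\beta,{\bf r}^\sharp(\alpha)\rangle = \mathbf{r}(\alpha,\beta)$, and recall from the excerpt that the coadjoint action is $\langle x\bullet\alpha,y\rangle = -\langle\alpha,[x,y]\rangle$. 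Then the $\mathcal{O}$-operator condition for $T={\bf r}^\sharp$ reads
\begin{align*}
[{\bf r}^\sharp(\alpha),{\bf r}^\sharp(\beta)] = {\bf r}^\sharp\big({\bf r}^\sharp(\alpha)\bullet\beta - {\bf r}^\sharp(\beta)\bullet\alpha\big),\qquad \alpha,\beta\in\mathfrak{g}^*,
\end{align*}
and the goal is to show the difference of the two sides, paired against an arbitrary $\gamma\in\mathfrak{g}^*$, equals (up to a universal nonzero constant) the value $[\mathbf{r},\mathbf{r}](\alpha,\beta,\gamma)$ of the Schouten bracket.

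The key computational step is to express $[\mathbf{r},\mathbf{r}]$ explicitly. Writing $\mathbf{r} = \sum_i a_i\wedge b_i$ with $a_i,b_i\in\mathfrak{g}$, the graded Leibniz and antisymmetry rules stated just before the lemma give
\begin{align*}
[\mathbf{r},\mathbf{r}] = 2\sum_{i,j}\big([a_i,a_j]\wedge b_i\wedge b_j - [a_i,b_j]\wedge b_i\wedge a_j + \cdots\big),
\end{align*}
and evaluating this trivector on $(\alpha,\beta,\gamma)\in(\mathfrak{g}^*)^3$ produces exactly the cyclic sum over $\alpha,\beta,\gamma$ of terms of the form $\langle\gamma,[{\bf r}^\sharp(\alpha),{\bf r}^\sharp(\beta)]\rangle$ minus terms of the form $\langle\gamma,{\bf r}^\sharp({\bf r}^\sharp(\alpha)\bullet\beta)\rangle$. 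Concretely, using $\langle\gamma,{\bf r}^\sharp({\bf r}^\sharp(\alpha)\bullet\beta)\rangle = -\langle {\bf r}^\sharp(\alpha)\bullet\beta, {\bf r}^\sharp(\gamma)\rangle = \langle\beta,[{\bf r}^\sharp(\alpha),{\bf r}^\sharp(\gamma)]\rangle$ and the antisymmetry ${\bf r}^\sharp(\alpha)$ contributes with a sign, one checks that
\begin{align*}
\tfrac12[\mathbf{r},\mathbf{r}](\alpha,\beta,\gamma) = \big\langle\gamma,\ [{\bf r}^\sharp(\alpha),{\bf r}^\sharp(\beta)] - {\bf r}^\sharp\big({\bf r}^\sharp(\alpha)\bullet\beta - {\bf r}^\sharp(\beta)\bullet\alpha\big)\big\rangle,
\end{align*}
after the cyclic terms recombine. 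I would carry this out by first writing the left side as a sum over cyclic permutations, then repeatedly applying the coadjoint-action formula to convert every occurrence of $\bullet$ into a Lie bracket paired with $\mathbf{r}$, and finally collecting: three of the resulting bracket-bracket terms assemble (by the Jacobi identity, which is where the trivector's total antisymmetry is really used) and the mixed terms cancel in pairs against the $\cdots$ I suppressed above.

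From that identity the lemma is immediate in both directions: ${\bf r}^\sharp$ is an $\mathcal{O}$-operator iff the right-hand bracket vanishes for all $\alpha,\beta,\gamma$, iff $[\mathbf{r},\mathbf{r}](\alpha,\beta,\gamma)=0$ for all $\alpha,\beta,\gamma$, iff $[\mathbf{r},\mathbf{r}]=0$ since a trivector is determined by its values on triples of covectors. I expect the main obstacle to be purely bookkeeping: getting the signs right in the expansion of the Schouten bracket of $\mathbf{r}$ with itself (the $(p-1)(q-1)$ and $(p-1)q$ exponents with $p=q=2$) and tracking the signs introduced each time the coadjoint formula $\langle x\bullet\alpha,y\rangle=-\langle\alpha,[x,y]\rangle$ is invoked, together with the antisymmetry of $\mathbf{r}$. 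Once the sign conventions are pinned down, no deep idea beyond the Jacobi identity is needed; alternatively one could invoke Proposition \ref{o-char} and identify $\mathrm{Gr}({\bf r}^\sharp)$ with the image of the bundle map, but the direct Schouten-bracket computation is the cleanest route.
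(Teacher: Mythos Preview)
The paper states this lemma without proof; it is quoted as a known fact (with references to \cite{bai,bai-guo-ni,kuper}) linking classical r-matrices to $\mathcal{O}$-operators, so there is no argument in the paper to compare against.

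Your outline is the standard one and is essentially correct: pair the $\mathcal{O}$-operator defect against an arbitrary $\gamma\in\mathfrak{g}^*$, use the antisymmetry $\langle\gamma,{\bf r}^\sharp(\xi)\rangle=-\langle\xi,{\bf r}^\sharp(\gamma)\rangle$ together with the coadjoint formula $\langle x\bullet\alpha,y\rangle=-\langle\alpha,[x,y]\rangle$, and recognise the result as (a constant times) $[\mathbf{r},\mathbf{r}](\alpha,\beta,\gamma)$. One small correction: you will \emph{not} need the Jacobi identity. Once you rewrite the two ${\bf r}^\sharp(\cdots\bullet\cdots)$ terms as pairings with Lie brackets, the expression
\[
\langle\gamma,[{\bf r}^\sharp(\alpha),{\bf r}^\sharp(\beta)]\rangle
+\langle\alpha,[{\bf r}^\sharp(\beta),{\bf r}^\sharp(\gamma)]\rangle
+\langle\beta,[{\bf r}^\sharp(\gamma),{\bf r}^\sharp(\alpha)]\rangle
\]
pops out directly, already totally antisymmetric in $(\alpha,\beta,\gamma)$; this cyclic sum is exactly what the Schouten-bracket expansion of $\tfrac12[\mathbf{r},\mathbf{r}]$ yields, with no further cancellations required. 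So the only genuine work is the sign bookkeeping you flagged, and the equivalence then follows as you say.
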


%\begin{defn}
%Let $T: M \rightarrow \mathfrak{g}$ and $T': M' \rightarrow \mathfrak{g}'$ be two $\mathcal{O}$-operators. A morphism between them consists of a pair $(f, g)$ where $f : \mathfrak{g} \rightarrow \mathfrak{g}'$ is a Lie algebra homomorphism and $g : M \rightarrow M'$ is a linear map satisfying $f \circ T = T' \circ g$ and $g ( x \bullet m) = f(x) \bullet' g(m)$, for $x \in \mathfrak{g}, m \in M$.
%\end{defn}

%The following result can be easily prove.

%\begin{prop}
%Let $T: M \rightarrow \mathfrak{g}$ and $T': M' \rightarrow \mathfrak{g}'$ be two $\mathcal{O}$-operators. Then a pair of linear maps $(f, g)$ is a morphism of $\mathcal{O}$-operators if and only if $f \circ T = T' \circ g$ and
%\begin{align*}
%\mathrm{Gr}((f,g)) = \{ \big( (x,m), ( f(x), g(m)) \big) |~ (x, m) \in \mathfrak{g} \oplus M \} 
%\end{align*}
%is a subalgebra of $(\mathfrak{g} \oplus M) \oplus (\mathfrak{h} \oplus N).$
%\end{prop}

%\begin{prop}
%Let $(f, g)$ be a morphism between $\mathcal{O}$-operators from $T: M \rightarrow \mathfrak{g}$ to $T': M' \rightarrow \mathfrak{g}'$. Then $g : M \rightarrow M'$ is a morphism of Lie algebras, where the Lie algebra structures on $M$ and $M'$ are induced from respective $\mathcal{O}$-operators.
%\end{prop}

%\begin{proof}
%We have
%\begin{align*}
%g ([m, n]_T ) =~& g ( T(m) \bullet n - T(n) \bullet m) \\
%=~& f ( T(m)) \bullet' g(n) - f (T(n)) \bullet' g(m) \\
%=~& T' (g(m)) \bullet' g(n) - T' (g(n)) \bullet' g(m) = [ g(m), g(n)]_{T'}.
%\end{align*}
%Hence the result follows.
%\end{proof}

\subsection{Gauge transformations}
Gauge transformations of Poisson structures by suitable closed $2$-forms was defined by \v{S}evera and Weinstein \cite{sev-wein}. Since $\mathcal{O}$-operators are generalization of Poisson structures, we may define gauge transformations of $\mathcal{O}$-operators. We proceed as follows.

Let $\mathfrak{g}$ be a Lie algebra and $M$ be a $\mathfrak{g}$-module. Let $L \subset \mathfrak{g} \ltimes M$ be a Lie subalgebra of the semi-direct product. For any linear map $B : \mathfrak{g} \rightarrow M$, we define a subspace
\begin{align*}
\tau_B (L ) := \{ (x, m + B(x)) |~(x, m) \in L \} \subset \mathfrak{g} \oplus M.
\end{align*}

\begin{prop}
The subspace $\tau_B (L) \subset \mathfrak{g} \oplus M$ is a Lie subalgebra of the semi-direct product $\mathfrak{g} \ltimes M$ if and only if $B$ is a $1$-cocycle in the cohomology of the Lie algebra $\mathfrak{g}$ with coefficients in $M$.
\end{prop}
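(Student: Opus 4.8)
The plan is a direct computation in the semi-direct product $\mathfrak{g} \ltimes M$ using the bracket \eqref{semi-dir-brkt}. First note that $\tau_B(L)$ is the image of $L$ under the linear isomorphism $(x,m) \mapsto (x, m + B(x))$ of $\mathfrak{g}\oplus M$, so it is automatically a linear subspace and the only question is closedness under the bracket. I would take arbitrary elements $(x, m+B(x))$ and $(y, n+B(y))$ of $\tau_B(L)$ with $(x,m),(y,n) \in L$, and expand
\begin{align*}
\big[(x, m+B(x)),\, (y, n+B(y))\big] &= \big([x,y],\ x \bullet (n+B(y)) - y \bullet (m+B(x))\big)\\
&= \big([x,y],\ (x \bullet n - y \bullet m) + \big(x \bullet B(y) - y \bullet B(x)\big)\big).
\end{align*}
Since $L$ is a subalgebra, $\big([x,y],\, x\bullet n - y\bullet m\big) = [(x,m),(y,n)] \in L$, so $\big([x,y],\, (x\bullet n - y\bullet m) + B([x,y])\big) \in \tau_B(L)$. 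Subtracting, the bracket above lies in $\tau_B(L)$ if and only if $\big(0,\, (\delta_{\mathrm{CE}}B)(x,y)\big) \in L$, where $(\delta_{\mathrm{CE}}B)(x,y) = x\bullet B(y) - y\bullet B(x) - B([x,y])$.

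For the "if" direction this is immediate: when $B$ is a $1$-cocycle, $\delta_{\mathrm{CE}}B = 0$, the error term is $(0,0)\in L$, and $\tau_B(L)$ is a Lie subalgebra. For the "only if" direction I would run the same computation on the canonical embedded subalgebra $\mathfrak{g}\times\{0\} \subset \mathfrak{g}\ltimes M$: there $\tau_B(\mathfrak{g}\times\{0\}) = \{(x,B(x)) : x \in \mathfrak{g}\} = \mathrm{Gr}(B)$, and $\mathrm{Gr}(B)$ being a subalgebra is precisely the identity $x\bullet B(y) - y\bullet B(x) = B([x,y])$ for all $x,y\in\mathfrak{g}$, i.e. $\delta_{\mathrm{CE}}B = 0$.

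The only delicate point — and the thing to state carefully — is that for a single fixed $L$ the computation yields only the $L$-relative condition $\big(0,(\delta_{\mathrm{CE}}B)(x,y)\big)\in L$ for $x,y$ in the image of $L$ in $\mathfrak{g}$; so the "only if" as phrased should be read either with $L$ ranging over all Lie subalgebras (whence one may use $\mathfrak{g}\times\{0\}$ as above) or under the tacit assumption that $L$ surjects onto $\mathfrak{g}$ with trivial intersection with $\{0\}\oplus M$. Beyond this, the argument is routine bilinear algebra: antisymmetry is inherited and no Jacobi identity needs checking, since $L$ is a subalgebra by hypothesis and $\tau_B$ only modifies the $M$-component.
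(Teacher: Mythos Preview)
Your approach is essentially the same direct computation as the paper's: expand the bracket of two generic elements of $\tau_B(L)$ and compare with the element of $\tau_B(L)$ sitting over $[(x,m),(y,n)]\in L$. The paper's proof stops at the line
\[
[(x,m+B(x)),(y,n+B(y))] = ([x,y],\, x\bullet n - y\bullet m + x\bullet B(y) - y\bullet B(x))
\]
and simply asserts that this lies in $\tau_B(L)$ if and only if $x\bullet B(y) - y\bullet B(x) = B([x,y])$.

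You are actually more careful than the paper on the ``only if'' direction. The paper's assertion implicitly treats $([x,y],\,x\bullet n - y\bullet m + B([x,y]))$ as the \emph{unique} element of $\tau_B(L)$ with first component $[x,y]$, and takes for granted that the projection of $L$ to $\mathfrak{g}$ is all of $\mathfrak{g}$. Your observation that, for a fixed $L$, the computation only yields $(0,(\delta_{\mathrm{CE}}B)(x,y))\in L$ for $x,y$ in the image of $L$ --- and that the statement should therefore be read either as ranging over all subalgebras $L$ (so one may take $L=\mathfrak{g}\times\{0\}$) or under a transversality hypothesis --- is a genuine refinement of the paper's argument, not a deviation from it.
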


\begin{proof}
For any $(x,m), (y, n) \in L$, we have
\begin{align*}
[ (x, m + B(x)) , (y, n + B(y)) ]
&= ( [x, y], x \bullet (n + B(y)) - y \bullet (m + B(x)) ) \\
&= ([x, y], x \bullet n - y \bullet m + x \bullet B(y) - y \bullet B(x)).
\end{align*}
It is in $\tau_B (L)$ if and only if $x \bullet B(y) - y \bullet B(x) = B ([x, y])$, or, equivalently, $B$ is a $1$-cocycle in the cohomology of $\mathfrak{g}$ with coefficients in $M$.
\end{proof}

Let $T: M \rightarrow \mathfrak{g}$ be an $\mathcal{O}$-operator on $M$ over the Lie algebra $\mathfrak{g}$. Consider the graph $\mathrm{Gr} (T) := \{ (T(m), m)|~ m \in M \} \subset \mathfrak{g} \ltimes M$ which is a Lie subalgebra of the semi-direct product. For any $1$-cocycle $B : \mathfrak{g} \rightarrow M$, we consider the deformed subalgebra $\tau_B ( \mathrm{Gr}(T)) \subset \mathfrak{g} \ltimes M$. The question is whether this subalgebra is the graph of a linear map from $M $ to $\mathfrak{g}$ ?

If the linear map $\mathrm{id}_M + B \circ T : M \rightarrow M$ is invertible, then $\tau_B (\mathrm{Gr}(T))$ is the graph of the linear map $T \circ (\mathrm{id}_M + B \circ T)^{-1} : M \rightarrow \mathfrak{g}$. In such a case, the $1$-cocycle $B$ is called $T$-admissible. Therefore, by Proposition \ref{o-char}, the linear map  $T \circ (\mathrm{id}_M + B \circ T)^{-1} : M \rightarrow \mathfrak{g}$ is an $\mathcal{O}$-operator on $M$ over the Lie algebra $\mathfrak{g}$. This $\mathcal{O}$-operator is called the gauge transformation of $T$ associated with $B$, and denoted by $T_B$.

\begin{remark}
(i) $\mathrm{im} (T) = \mathrm{im} (T_B)$.\\ (ii) If $T$ is invertible then $T_B$ is so, and
\begin{align*}
T_B^{-1} = (\mathrm{id}_M + B \circ T) \circ T^{-1} = T^{-1} + B.
\end{align*}
\end{remark}

\begin{prop}\label{iso-m-mt}
The Lie algebra structures on $M$ induced from $\mathcal{O}$-operators $T$ and $T_B$ are isomorphic.
\end{prop}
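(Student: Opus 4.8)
The plan is to exhibit an explicit Lie algebra isomorphism between $M^T$ and $M^{T_B}$, and the natural candidate is $\Phi := \mathrm{id}_M + B \circ T : M \to M$, which is invertible precisely because $B$ is assumed to be $T$-admissible. I would proceed in two steps: first identify each of the Lie algebras $M^T$ and $M^{T_B}$ with the corresponding graph subalgebra of $\mathfrak{g} \ltimes M$, and then observe that these graphs are intertwined by the map $\tau_B$, whose homomorphism property is already contained in the computation proving the previous proposition.

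In more detail, I would first record the elementary fact that for any $\mathcal{O}$-operator $S : M \to \mathfrak{g}$ the assignment $m \mapsto (S(m), m)$ is a Lie algebra isomorphism from $M^S$ onto $\mathrm{Gr}(S)$: the $\mathcal{O}$-operator identity says exactly that the bracket in $\mathfrak{g} \ltimes M$ of $(S(m), m)$ and $(S(n), n)$ equals $(S([m,n]^S), [m,n]^S)$. Applying this to $S = T$ and to $S = T_B$ gives $M^T \cong \mathrm{Gr}(T)$ and $M^{T_B} \cong \mathrm{Gr}(T_B)$. Next, by the very definition of $T_B$ we have $\mathrm{Gr}(T_B) = \tau_B(\mathrm{Gr}(T))$, and $\tau_B : \mathrm{Gr}(T) \to \mathrm{Gr}(T_B)$, $(x,m) \mapsto (x, m + B(x))$, is a bijection (with inverse $\tau_{-B}$) whose compatibility with the semi-direct product bracket is verified by the displayed computation in the proof of the previous proposition, using that $B$ is a $1$-cocycle. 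Composing the three isomorphisms $M^{T_B} \xrightarrow{\sim} \mathrm{Gr}(T_B) \xrightarrow{\tau_B^{-1}} \mathrm{Gr}(T) \xrightarrow{\sim} M^T$ yields the desired isomorphism, which on unwinding the definitions is nothing but $(\mathrm{id}_M + B \circ T)^{-1}$.

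Alternatively, and perhaps more transparently, I would verify directly that $\Phi = \mathrm{id}_M + B \circ T$ intertwines the brackets, i.e. $\Phi([m,n]^T) = [\Phi(m), \Phi(n)]^{T_B}$. Using the $\mathcal{O}$-operator identity $T([m,n]^T) = [T(m), T(n)]$, the left-hand side equals $T(m) \bullet n - T(n) \bullet m + B([T(m), T(n)])$. For the right-hand side the key bookkeeping identity is $T_B \circ \Phi = T$, which is immediate from $T_B = T \circ \Phi^{-1}$; hence $[\Phi(m), \Phi(n)]^{T_B} = T(m) \bullet \Phi(n) - T(n) \bullet \Phi(m)$, and expanding $\Phi(n) = n + B(T(n))$, $\Phi(m) = m + B(T(m))$ produces the extra terms $T(m) \bullet B(T(n)) - T(n) \bullet B(T(m))$, which collapse to $B([T(m), T(n)])$ by the $1$-cocycle condition applied to the pair $T(m), T(n) \in \mathfrak{g}$. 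The two sides then agree.

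There is no genuine obstacle here; the proof is essentially bookkeeping, and the only points requiring care are getting the direction of the isomorphism right (it is $\Phi : M^T \to M^{T_B}$, equivalently $\Phi^{-1} : M^{T_B} \to M^T$) and using the two defining relations in tandem — the $\mathcal{O}$-operator identity for $T$ and the $1$-cocycle identity for $B$ — together with $T_B \circ (\mathrm{id}_M + B \circ T) = T$. I would present the direct verification as the proof and relegate the graph-theoretic viewpoint to a remark.
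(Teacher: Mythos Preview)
Your proposal is correct, and your direct verification is essentially identical to the paper's proof: the paper also takes $\Phi = \mathrm{id}_M + B \circ T$, uses $T_B \circ \Phi = T$ to rewrite $[\Phi(m),\Phi(n)]^{T_B}$, and then invokes the $1$-cocycle identity and the $\mathcal{O}$-operator identity for $T$ to match it with $\Phi([m,n]^T)$. Your graph-theoretic viewpoint via $\tau_B$ is a nice conceptual bonus not present in the paper, but the core argument is the same.
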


\begin{proof}
Consider the invertible linear map $\mathrm{id}_M + B \circ T : M \rightarrow M$. Then for any $m, n \in M$, we have
\begin{align*}
&[ (\mathrm{id}_M + B \circ T) (m), (\mathrm{id}_M + B \circ T)(n)]^{T_B} \\
&= T_B (\mathrm{id}_M + B \circ T) (m) \bullet (\mathrm{id}_M + B \circ T) (n) - T_B (\mathrm{id}_M + B \circ T)(n) \bullet (\mathrm{id}_M + B \circ T)(m) \\
&= T(m) \bullet n + T(m) \bullet BT(n) - T(n) \bullet m - T(n) \bullet BT(m) \\
&= T(m) \bullet n - T(n) \bullet m + B ( [T(m), T(n)]) \\
&= [m, n]^T +B \circ T ([m, n]^T) = (\mathrm{id}_M + B \circ T)([m, n]^T).
\end{align*}
Hence the proof.
\end{proof}

\begin{remark}
Gauge transformations of $\mathcal{O}$-operators (generalized Rota-Baxter operators of Uchino \cite{uchino}) on bimodules over associative algebras can be defined in a similar manner. Moreover, these two constructions of gauge transformations are related by the standard skew-symmetrization from associative algebras to Lie algebras. 
\end{remark}

\subsection{Reductions}\label{subsec-red}

In this subsection, we extend the well-known Marsden-Ratiu Poisson reduction theorem \cite{mars-ratiu} to $\mathcal{O}$-operators. In the classical case, this reduction theorem allows one, under certain conditions, to construct a new Poisson structure on the quotient $N / \mathcal{F}$, $N$ being a submanifold of a Poisson manifold $M$ and $\mathcal{F}$ the foliation associated with an integrable distribution $E \cap TN$ with $E$ a vector subbundle of $TM$ restricted to $N$.

Let $\mathfrak{g}$ be a Lie algebra and $M$ be a $\mathfrak{g}$-module. Let $T : M \rightarrow \mathfrak{g}$ be an $\mathcal{O}$-operator on $M$ over the Lie algebra $\mathfrak{g}$. Suppose $\mathfrak{h} \subset \mathfrak{g}$ is a Lie subalgebra, $E \subset \mathfrak{g}$ a subspace satisfying the property that
 the quotient $\mathfrak{h}/ E \cap \mathfrak{h}$ is a Lie algebra and the projection $\pi: \mathfrak{h} \rightarrow \mathfrak{h} / E \cap \mathfrak{h}$ is a morphism of Lie algebras.
%\item[$\triangleright$] the subspace $E$ is canonical in the sense that: for $x \in E,$
%\begin{align*}
%x \bullet m = 0, ~~~~ x \bullet n = 0 ~~~~ \Rightarrow ~~~~ x \bullet [m, n]_T = 0, ~\text{ for }m, n \in M.
%\end{align*}

Let $N \subset M$ be an $\mathfrak{h}$-module. Define a subspace
\begin{align*}
(E \cap \mathfrak{h})^0_N := \{ n \in N |~ x \bullet n = 0, ~ \forall x \in E \cap \mathfrak{h} \} \subset N. 
\end{align*}
Then $(E \cap \mathfrak{h})^0_N$ is a $\mathfrak{h}/ E \cap \mathfrak{h}$-module with the action given by
%\begin{align*}
~$| h | \bullet n = h \bullet n.$
%\end{align*}

\begin{defn}
Let $\mathfrak{g}$ be a Lie algebra and $T : M \rightarrow \mathfrak{g}$ be an $\mathcal{O}$-operator on a $\mathfrak{g}$-module $M$. A triple $(\mathfrak{h}, E, N)$ as above is said to be reducible if there is an $\mathcal{O}$-operator $\overline{T} : (E \cap \mathfrak{h})^0_N \rightarrow \mathfrak{h}/ E \cap \mathfrak{h} $ such that for any $m, n \in (E \cap \mathfrak{h})^0_N$, we have $\overline{T} (m) \bullet n = T (m) \bullet n$.
\end{defn}

The Marsden-Ratiu reduction theorem for $\mathcal{O}$-operators can be stated as follows.

\begin{thm}\label{mr-main}
Let $\mathfrak{g}$ be a Lie algebra and $M$ be a $\mathfrak{g}$-module. Let $T: M \rightarrow \mathfrak{g}$ be an $\mathcal{O}$-operator on $M$ over the Lie algebra $\mathfrak{g}$. 
%Then a triple $(\mathfrak{h}, E, N)$ as above is reducible if and only if 
%\begin{align*}
%T (E^0_N) \subset \mathfrak{h} + E.
%\end{align*}
If $T ((E \cap \mathfrak{h})^0_N) \subset \mathfrak{h}$ then $(\mathfrak{h}, E, N)$ is reducible.
\end{thm}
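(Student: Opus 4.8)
The plan is to show that the restriction of $T$ defines the required operator $\overline{T}$ by descending through the quotient, and then verify the $\mathcal{O}$-operator identity using that $T$ is one on $M$ over $\mathfrak{g}$. First I would observe that by hypothesis $T((E\cap\mathfrak{h})^0_N)\subset\mathfrak{h}$, so composing with the projection $\pi:\mathfrak{h}\rightarrow\mathfrak{h}/E\cap\mathfrak{h}$ gives a linear map $\pi\circ T|_{(E\cap\mathfrak{h})^0_N} : (E\cap\mathfrak{h})^0_N \rightarrow \mathfrak{h}/E\cap\mathfrak{h}$, and I would define $\overline{T}$ to be this composite, writing $\overline{T}(m)=|T(m)|$ for $m\in(E\cap\mathfrak{h})^0_N$. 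The compatibility condition $\overline{T}(m)\bullet n = T(m)\bullet n$ for $m,n\in(E\cap\mathfrak{h})^0_N$ is then almost a matter of unwinding definitions: the action of $\mathfrak{h}/E\cap\mathfrak{h}$ on $(E\cap\mathfrak{h})^0_N$ was defined precisely by $|h|\bullet n = h\bullet n$, so $\overline{T}(m)\bullet n = |T(m)|\bullet n = T(m)\bullet n$; here one must check that $T(m)\bullet n$ again lies in $(E\cap\mathfrak{h})^0_N$, which follows because for $x\in E\cap\mathfrak{h}$ we have $x\bullet(T(m)\bullet n) = [x,T(m)]\bullet n + T(m)\bullet(x\bullet n)$, and the second term vanishes since $n\in(E\cap\mathfrak{h})^0_N$, while the first vanishes if $[x,T(m)]\in E\cap\mathfrak{h}$ — and this is where I would use that $\mathfrak{h}$ is a subalgebra together with the fact that $\pi$ is a Lie morphism, i.e. $[E\cap\mathfrak{h},\mathfrak{h}]\subset E\cap\mathfrak{h}$ (the kernel of a Lie algebra morphism is an ideal).

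The heart of the proof is verifying that $\overline{T}$ is an $\mathcal{O}$-operator on $(E\cap\mathfrak{h})^0_N$ over $\mathfrak{h}/E\cap\mathfrak{h}$. For $m,n\in(E\cap\mathfrak{h})^0_N$, I would compute
\begin{align*}
[\overline{T}(m),\overline{T}(n)]_{\mathfrak{h}/E\cap\mathfrak{h}} &= [|T(m)|,|T(n)|]_{\mathfrak{h}/E\cap\mathfrak{h}} = |[T(m),T(n)]_{\mathfrak{g}}| \\
&= |T(T(m)\bullet n - T(n)\bullet m)| = \overline{T}\big(T(m)\bullet n - T(n)\bullet m\big) \\
&= \overline{T}\big(\overline{T}(m)\bullet n - \overline{T}(n)\bullet m\big),
\end{align*}
where the second equality uses that $\pi$ is a Lie morphism, the third uses that $T$ is an $\mathcal{O}$-operator on $M$ over $\mathfrak{g}$, the fourth uses the definition of $\overline{T}$ (noting $T(m)\bullet n - T(n)\bullet m\in(E\cap\mathfrak{h})^0_N$ by the closure argument above, and that $T$ sends it into $\mathfrak{h}$), and the last uses the already-established compatibility $\overline{T}(m)\bullet n = T(m)\bullet n$. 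This gives exactly the $\mathcal{O}$-operator identity, so $(\mathfrak{h},E,N)$ is reducible with witness $\overline{T}$.

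I expect the main obstacle to be the bookkeeping around which subspace various elements live in: one must repeatedly confirm that $T(m)\bullet n - T(n)\bullet m$ stays inside $(E\cap\mathfrak{h})^0_N$, that $T$ of such an element stays inside $\mathfrak{h}$ (this is exactly the hypothesis $T((E\cap\mathfrak{h})^0_N)\subset\mathfrak{h}$ applied to the new element), and that $|{\,\cdot\,}|$ is well-defined and a morphism — none of which is deep, but all of which must be spelled out for the argument to be airtight. A secondary point worth stating explicitly is why $E\cap\mathfrak{h}$ is an ideal of $\mathfrak{h}$: this is forced by the standing assumption that $\pi:\mathfrak{h}\rightarrow\mathfrak{h}/E\cap\mathfrak{h}$ is a Lie algebra morphism onto a Lie algebra, so its kernel $E\cap\mathfrak{h}$ is an ideal, and this is what makes the $\mathfrak{h}/E\cap\mathfrak{h}$-action on $(E\cap\mathfrak{h})^0_N$ compatible with everything in sight.
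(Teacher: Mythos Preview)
Your proposal is correct and follows essentially the same route as the paper: define $\overline{T}(m)=|T(m)|$, verify $T(m)\bullet n\in (E\cap\mathfrak{h})^0_N$ via the module identity and the fact that $E\cap\mathfrak{h}=\ker\pi$ is an ideal of $\mathfrak{h}$, and then check the $\mathcal{O}$-operator identity by pushing the computation through $\pi$. Your bookkeeping is in fact slightly more careful than the paper's (you have the correct sign in $x\bullet(T(m)\bullet n)=[x,T(m)]\bullet n + T(m)\bullet(x\bullet n)$, and you explicitly note why $E\cap\mathfrak{h}$ is an ideal), but the argument is the same.
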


\begin{proof}
For any $m, n \in (E \cap \mathfrak{h})^0_N$, we claim that $T(m) \bullet n \in (E \cap \mathfrak{h})^0_N$. This follows as for any $x \in E \cap \mathfrak{h}$, we have
\begin{align*}
x \bullet ( T(m) \bullet n ) = [x, T(m) ] \bullet n  - T (m) \bullet ( x \bullet n).
\end{align*}
First observe that $\pi [ x, T(m)] = [ \underbrace{\pi (x)}_{= 0}, \pi T(m) ] = 0$. Hence $[x, T(m)] \in E \cap \mathfrak{h}$. Therefore, the first term of the right hand side vanishes as $[x, T(m)] \in E \cap \mathfrak{h}$ and $n \in (E \cap \mathfrak{h})_N^0$. The second term of the right hand side vanishes as $x \in E \cap \mathfrak{h}$ and $n \in (E \cap \mathfrak{h})^0_N$. Therefore, we get $T(m ) \bullet n \in (E \cap \mathfrak{h})^0_N$. We define $\overline{T} : (E \cap \mathfrak{h})^0_N \rightarrow \mathfrak{h}/ E \cap \mathfrak{h}$ by $\overline{T}(m) := |T(m)|$ the class of $T(m)$. Then we have
\begin{align*}
[ \overline{T}(m), \overline{T}(n)] = [ |T(m)|, |T(n)| ] = | [T(m), T(n)] |.
\end{align*}
On the other hand
\begin{align*}
\overline{T} ( \overline{T}(m) \bullet n - \overline{T} (n) \bullet m ) = \overline{T} ( |T(m)| \bullet n - |T(n)| \bullet m) =~&  \overline{T} ( T(m) \bullet n - T(n) \bullet m) \\
=~& | T ( T(m) \bullet n - T(n) \bullet m) |  = | [T(m), T(n)] |.
\end{align*}
Hence $\overline{T}$ is an $\mathcal{O}$-operator on $(E \cap \mathfrak{h})^0_N$ over the Lie algebra $\mathfrak{h} / E \cap \mathfrak{h}$. Moreover $\overline{T}(m) \bullet n = | T(m)| \bullet n = T(m) \bullet n$. Hence the triple $(\mathfrak{h}, E, N)$ is reducible.
\end{proof}

As consequences, we obtain the followings.

\medskip

(i) Let $T : M \rightarrow \mathfrak{g}$ be an $\mathcal{O}$-operator and $\mathfrak{h} \subset \mathfrak{g}$ be a Lie subalgebra. If $N \subset M$ is an $\mathfrak{h}$-submodule and $T(N ) \subset \mathfrak{h}$, then the restriction $T : N \rightarrow \mathfrak{h}$ is an $\mathcal{O}$-operator on $N$ over $\mathfrak{h}$.

\medskip

(ii) Let $T : M \rightarrow \mathfrak{g}$ be an $\mathcal{O}$-operator and $E \subset \mathfrak{g}$ be an ideal. Then $E^0_M$ is an $\mathfrak{g}/ E$-module and the map $\overline{T}: E^0_M \rightarrow \mathfrak{g}/ E,~ m \mapsto |T(m)|$ is an $\mathcal{O}$-operator on $E^0_M$ over $\mathfrak{g}/ E$ satisfying $\overline{T}(m) \bullet n = T(m) \bullet n$, for $m, n \in E^0_M$.

\section{Compatible $\mathcal{O}$-operators}\label{sec-comp-o}

\begin{defn}
Two $\mathcal{O}$-operators $T_1, T_2 : M \rightarrow \mathfrak{g}$ on $M$ over the Lie algebra $\mathfrak{g}$ are said to be compatible if their sum $T_1 + T_2 : M \rightarrow \mathfrak{g}$ is also an $\mathcal{O}$-operator.
\end{defn}

Note that the condition in the above definition is equivalent to
\begin{align}\label{comp-o-equiv}
[T_1 (m), T_2 (n)] + [ T_2 (m), T_1 (n)] = T_1 (T_2 (m) \bullet n - T_2 (n) \bullet m) + T_2 ( T_1 (m) \bullet n - T_1 (n) \bullet m).
\end{align}
This also implies that for any $\mu, \lambda \in \mathbb{K}$, the linear combination $\mu T_1 + \lambda T_2$ is an $\mathcal{O}$-operator.

If two Poisson structures are compatible and one of them is non-degenerate (i.e. obtained from a symplectic structure) then one can construct a Nijenhuis tensor on the manifold \cite{vaisman}. Since $\mathcal{O}$-operators are generalization of Poisson structures, one can extend this result in our result.

\begin{prop}\label{prop-inver-nij}
Let $T_1, T_2 : M \rightarrow \mathfrak{g}$ be two $\mathcal{O}$-operators on $M$ over the Lie algebra $\mathfrak{g}$. If $T_1, T_2$ are compatible and $T_2$ is invertible then $N = T_1 \circ T_2^{-1} : \mathfrak{g} \rightarrow \mathfrak{g}$ is a Nijenhuis operator on the Lie algebra $\mathfrak{g}$.
Conversely, if $T_1, T_2$ are both invertible and $N$ is a Nijenhuis tensor then $T_1 , T_2$ are compatible.
\end{prop}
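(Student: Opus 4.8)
The plan is to transport the Nijenhuis identity for $N = T_1 \circ T_2^{-1}$ over to a statement about $m,n \in M$ via the change of variables $x = T_2(m)$, $y = T_2(n)$. Since $T_2$ is bijective, this substitution ranges over all pairs $x,y \in \mathfrak{g}$, and it satisfies $Nx = (T_1\circ T_2^{-1})(T_2(m)) = T_1(m)$, $Ny = T_1(n)$, as well as $N\circ T_2 = T_1$. Under these identifications I expect the Nijenhuis identity
\[
[Nx, Ny] = N\big([Nx, y] + [x, Ny] - N[x,y]\big)
\]
to become equivalent, after using the two $\mathcal{O}$-operator conditions for $T_1$ and $T_2$, to the compatibility relation \eqref{comp-o-equiv}.

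First I would rewrite the left-hand side: since $T_1$ is an $\mathcal{O}$-operator, $[Nx, Ny] = [T_1(m), T_1(n)] = T_1\big(T_1(m)\bullet n - T_1(n)\bullet m\big)$. For the right-hand side I would first handle $N[x,y] = N[T_2(m), T_2(n)]$: since $T_2$ is an $\mathcal{O}$-operator, $[T_2(m), T_2(n)] = T_2\big(T_2(m)\bullet n - T_2(n)\bullet m\big)$, so applying $N$ and using $N\circ T_2 = T_1$ gives $N[x,y] = T_1\big(T_2(m)\bullet n - T_2(n)\bullet m\big)$. Hence the argument of the outer $N$ on the right becomes $[T_1(m), T_2(n)] + [T_2(m), T_1(n)] - T_1\big(T_2(m)\bullet n - T_2(n)\bullet m\big)$, and I would then invoke \eqref{comp-o-equiv} to replace the sum of the two brackets, so that this argument collapses to $T_2\big(T_1(m)\bullet n - T_1(n)\bullet m\big)$. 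Applying $N$ and $N\circ T_2 = T_1$ once more produces exactly $T_1\big(T_1(m)\bullet n - T_1(n)\bullet m\big)$, matching the left-hand side; this settles the first implication, and note that only $T_2$ needs to be invertible here.

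For the converse, with $T_1$ also invertible, I would read the same chain of equalities backwards. After the substitution $x = T_2(m)$, $y = T_2(n)$ and after using the $\mathcal{O}$-operator conditions for $T_1$, $T_2$ together with $N\circ T_2 = T_1$, the Nijenhuis identity takes the form
\[
T_1\big(T_1(m)\bullet n - T_1(n)\bullet m\big) = (T_1\circ T_2^{-1})\big([T_1(m), T_2(n)] + [T_2(m), T_1(n)] - T_1(T_2(m)\bullet n - T_2(n)\bullet m)\big).
\]
Cancelling the outer $T_1$ — this is where injectivity of $T_1$ is used — and then applying $T_2$ to both sides rearranges precisely into \eqref{comp-o-equiv}, so $T_1 + T_2$ is an $\mathcal{O}$-operator, i.e. $T_1$ and $T_2$ are compatible.

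I do not anticipate a genuine obstacle: the content is entirely the manipulation of the three defining identities. The only points needing a moment's care are that the substitution $x = T_2(m)$, $y = T_2(n)$ is surjective onto pairs in $\mathfrak{g}$ (so that checking the Nijenhuis identity on all of $\mathfrak{g}$ is the same as checking its pullback to $M$), and that the converse requires $T_1$ invertible, not merely $T_2$, in order to strip off the outer $T_1$ — which is exactly why the hypotheses of the two halves differ.
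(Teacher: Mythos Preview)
Your proposal is correct and follows essentially the same route as the paper: substitute $x = T_2(m)$, $y = T_2(n)$, use $N\circ T_2 = T_1$ together with the $\mathcal{O}$-operator identities for $T_1$ and $T_2$, and reduce the Nijenhuis identity to the compatibility relation \eqref{comp-o-equiv}. For the converse the paper phrases the cancellation as applying $N^{-1} = T_2\circ T_1^{-1}$ to both sides, which is exactly your ``cancel the outer $T_1$, then apply $T_2$'' step written as a single composition.
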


\begin{proof}
Let $T_1, T_2$ be compatible and $T_2$ invertible. For any $x, y \in \mathfrak{g}$, there exists (unique) elements $m, n \in M$ such that $T_2(m) = x$ and $T_2 (n) = y$. Then
\begin{align*}
&[Nx, Ny] - N ([Nx, y] + [x, Ny]) + N^2 [x, y] \\
&= [NT_2 (m), NT_2 (n)] - N ( [NT_2(m), T_2 (n)] + [T_2 (m), NT_2 (n)] ) + N^2 [T_2 (m), T_2 (n)]  \\
&= [T_1 (m), T_1 (n)] - N ( [T_1(m), T_2 (n)] + [T_2 (m), T_1 (n)] ) + N^2 [T_2 (m), T_2 (n)]  \\
&= T_1 ( T_1(m) \bullet n - T_1(n) \bullet m ) - NT_1 ( T_2(m) \bullet n - T_2 (n) \bullet m ) - NT_2 (  T_1(m) \bullet n - T_1(n) \bullet m ) \\& ~~~
+ N^2 T_2 (T_2(m) \bullet n - T_2 (n) \bullet m) ~~~(\text{as } T_1, T_2 \text{ are } \mathcal{O}\text{-operators and by } (\ref{comp-o-equiv}))\\
& = 0.
\end{align*}

Conversely, if $N$ is a Nijenhuis tensor then for all $m, n \in M$,
\begin{align*}
[NT_2 (m), NT_2 (n)] = N ( [NT_2(m), T_2(n)] + [T_2(m), NT_2(n)]) - N^2 [ T_2(m), T_2(n)].
\end{align*}
This implies that 
\begin{align*}
T_1 ( T_1(m) \bullet n - T_1(n) \bullet m) = N ( [T_1(m), T_2(n)] + [T_2(m), T_1(n)]) - NT_1 ( T_2(m) \bullet n - T_2(n) \bullet m).
\end{align*}
Since $N$ is invertible, we may apply $N^{-1}$ to both sides to get the identity (\ref{comp-o-equiv}). Hence $T_1$ and $T_2$ are compatible.
\end{proof}

\subsection{Compatible pre-Lie algebras}
In this subsection, we recall pre-Lie algebras and their relation with $\mathcal{O}$-operators. We show that compatible $\mathcal{O}$-operators give rise to compatible pre-Lie algebras.

\begin{defn}
A (left) pre-Lie algebra is a vector space $L$ together with a linear map $\square : L \otimes L \rightarrow L$ satisfying
\begin{align*}
(x ~\square~ y) ~\square~ z - x ~\square~ ( y ~\square~ z) = (y ~\square~ x) ~\square~ z - y ~\square~ ( x ~\square~ z), ~ \text{ for } x, y, z \in L.
\end{align*}
In this case, $\square$ is called a pre-Lie product on $L$.
\end{defn}

The connection between $\mathcal{O}$-operators and pre-Lie algebras is given by the following \cite{bai}.
\begin{prop}
Let $T : M \rightarrow \mathfrak{g}$ be an $\mathcal{O}$-operator on $M$ over the Lie algebra $\mathfrak{g}$. Then the product $\square_T : M \otimes M \rightarrow M, ~ m ~\square_T~ n = T(m) \bullet n$ is a pre-Lie product on $M$.
\end{prop}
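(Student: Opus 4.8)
The plan is to verify the defining identity of a pre-Lie product directly, reducing it to the $\mathcal{O}$-operator condition of Definition \ref{defn-o-op} together with the module axiom for $(M,\bullet)$.

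First I would unwind the definition $m \mathbin{\square_T} n = T(m)\bullet n$: for $m,n,p \in M$ the left associator is
\[
(m \mathbin{\square_T} n) \mathbin{\square_T} p - m \mathbin{\square_T} (n \mathbin{\square_T} p) = T\big(T(m)\bullet n\big)\bullet p - T(m)\bullet \big(T(n)\bullet p\big),
\]
and the pre-Lie identity is precisely the assertion that this expression is unchanged under interchanging $m$ and $n$. So I would subtract from it the same expression with $m$ and $n$ swapped and show the difference vanishes.

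Collecting terms, the difference equals
\[
\big(T(T(m)\bullet n) - T(T(n)\bullet m)\big)\bullet p \;-\; T(m)\bullet(T(n)\bullet p) \;+\; T(n)\bullet(T(m)\bullet p).
\]
By linearity of $T$ the first summand is $T\big(T(m)\bullet n - T(n)\bullet m\big)\bullet p$, which by the $\mathcal{O}$-operator relation equals $[T(m),T(n)]\bullet p$. Finally the module axiom $[x,y]\bullet p = x\bullet(y\bullet p) - y\bullet(x\bullet p)$, applied with $x=T(m)$ and $y=T(n)$, rewrites this as $T(m)\bullet(T(n)\bullet p) - T(n)\bullet(T(m)\bullet p)$, which cancels the two remaining terms. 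Hence the difference is zero and $\square_T$ is a pre-Lie product on $M$.

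There is no genuine obstacle here: the statement is a short computation, and the only point requiring mild care is the bookkeeping of which terms arise from the $m\leftrightarrow n$ swap and keeping track of the single application each of the $\mathcal{O}$-operator identity and the module axiom. One could alternatively phrase the argument via the subalgebra $\mathrm{Gr}(T)\subset\mathfrak{g}\ltimes M$ of Proposition \ref{o-char}, but the direct verification above is the cleanest route.
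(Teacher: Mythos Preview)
Your proof is correct. The paper itself does not give a proof of this proposition; it simply states the result with a citation to \cite{bai}, so there is no ``paper's own proof'' to compare against. Your direct verification via the $\mathcal{O}$-operator identity and the module axiom is the standard argument and is exactly what one would expect here.
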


\begin{defn}
Two pre-Lie products $\square_1$ and $\square_2$ on a vector space $L$ are said to compatible if for all $\mu, \lambda \in \mathbb{K}$, the sum $\mu \square_1 + \lambda \square_2$ is also a pre-Lie product on $L$. 
\end{defn}

This is equivalent to 
\begin{align*}
&(x ~\square_1~ y) ~\square_2~ z - x ~\square_1~ ( y ~\square_2~ z) + (x ~\square_2~ y) ~\square_1~ z - x ~\square_2~ ( y ~\square_1~ z)\\ &= (y ~\square_1~ x) ~\square_2~ z - y ~\square_1~ ( x ~\square_2~ z) + (y ~\square_2~ x) ~\square_1~ z - y ~\square_2~ ( x ~\square_1~ z).
\end{align*}
\begin{prop}
Let $T_1, T_2 : M \rightarrow \mathfrak{g}$ be two compatible $\mathcal{O}$-operators on $M$ over the Lie algebra $\mathfrak{g}$. Then the pre-Lie products $\square_{T_1}$ and $\square_{T_2}$ on $M$ are compatible.
\end{prop}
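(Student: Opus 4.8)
The plan is to reduce the statement to the two facts already established in the excerpt: first, that compatibility of $T_1$ and $T_2$ forces every linear combination $\mu T_1 + \lambda T_2$ to be an $\mathcal{O}$-operator on $M$ over $\mathfrak{g}$ (this is recorded right after the definition of compatible $\mathcal{O}$-operators); and second, that any $\mathcal{O}$-operator $T$ yields a pre-Lie product $\square_T$ on $M$ via $m \,\square_T\, n = T(m) \bullet n$ (the Proposition of Bai).

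The key observation is purely formal: the assignment $T \mapsto \square_T$ is linear in $T$. Indeed, for $\mu, \lambda \in \mathbb{K}$ and $m, n \in M$,
\begin{align*}
m \,\square_{\mu T_1 + \lambda T_2}\, n = (\mu T_1 + \lambda T_2)(m) \bullet n = \mu \, (T_1(m) \bullet n) + \lambda \, (T_2(m) \bullet n) = \mu \, (m \,\square_{T_1}\, n) + \lambda \, (m \,\square_{T_2}\, n),
\end{align*}
so $\square_{\mu T_1 + \lambda T_2} = \mu \square_{T_1} + \lambda \square_{T_2}$ as bilinear maps on $M$. Now I would fix arbitrary $\mu, \lambda$, invoke compatibility of $T_1, T_2$ to conclude $\mu T_1 + \lambda T_2$ is an $\mathcal{O}$-operator, then apply Bai's Proposition to conclude $\square_{\mu T_1 + \lambda T_2}$ is a pre-Lie product; by the displayed identity this is exactly $\mu \square_{T_1} + \lambda \square_{T_2}$, which is therefore a pre-Lie product. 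Since $\mu, \lambda$ were arbitrary, $\square_{T_1}$ and $\square_{T_2}$ are compatible pre-Lie products by definition.

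There is essentially no obstacle here; the only thing to be slightly careful about is matching the definition of compatible pre-Lie products (requiring $\mu \square_1 + \lambda \square_2$ to be a pre-Lie product for all $\mu, \lambda$, not merely $\square_1 + \square_2$), but this matches exactly the strengthened statement about $\mathcal{O}$-operators already noted in the text, so the argument goes through verbatim. If one preferred a computational proof instead, one could expand the displayed compatibility identity for pre-Lie products directly in terms of the actions $T_i(-) \bullet -$ and reduce it to identity (\ref{comp-o-equiv}) together with the $\mathfrak{g}$-module axiom; but the conceptual route above is cleaner and I would present that.
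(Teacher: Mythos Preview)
Your proposal is correct; the paper states this proposition without proof, presumably because the argument is exactly the straightforward one you give: linearity of $T \mapsto \square_T$ together with the fact that every linear combination $\mu T_1 + \lambda T_2$ is again an $\mathcal{O}$-operator.
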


\section{$\mathcal{ON}$-structures}\label{sec-on}
In this section, we study Nijenhuis structure on a module over a Lie algebra. Then we introduce $\mathcal{ON}$-structures and show that an $\mathcal{ON}$-structure induces a hierarchy of compatible $\mathcal{O}$-operators.

\subsection{Nijenhuis structures on modules over Lie algebras}
Let $\mathfrak{g}$ be a Lie algebra and $M$ be a $\mathfrak{g}$-module. An infinitesimal deformation of the $\mathfrak{g}$-module $M$ is given by sums
\begin{align*}
[x, y]_t = [x, y] + t [x, y]_1 ~~~ \text{ and } ~~~ x \bullet_t m = x \bullet m + t~ x \bullet_1 m, ~~~\text{ for } x, y \in \mathfrak{g}, m \in M,
\end{align*}
where $[~, ~]_1$ is a skew-symmetric bracket on $\mathfrak{g}$ and $\bullet_1 : \mathfrak{g} \times M \rightarrow M$ is a bilinear  map such that $(\mathfrak{g}, [~,~]_t)$ is a Lie algebra and $\bullet_t$ defines a $(\mathfrak{g}, [~,~]_t)$-module on $M$. Thus it follows that the following identities are hold: for $x, y, z \in \mathfrak{g}$ and $m \in M$,
\begin{center}
$[ x, [y, z]_t ]_t + [y, [z, x]_t ]_t + [z, [x, y]_t ]_t =0,$
\end{center}
\begin{center}
$[x, y]_t \bullet_t m = x \bullet_t ( y \bullet_t m ) - y \bullet_t ( x \bullet_t m).$
\end{center}
These two conditions are equivalent to the following identities
\begin{align}
&[ x, [y, z]_1 ] + [y, [z, x]_1 ] + [z, [x, y]_1 ] + [ x, [y, z] ]_1 + [y, [z, x] ]_1 + [z, [x, y] ]_1 = 0, \label{cond-11}\\
&[ x, [y, z]_1 ]_1 + [y, [z, x]_1 ]_1 + [z, [x, y]_1 ]_1 = 0, \label{cond-22}\\
&[x, y]_1 \bullet_1 m = x \bullet_1 ( y \bullet_1 m ) - y \bullet_1 ( x \bullet_1 m), \label{cond-33}\\
&[x, y] \bullet_1 m + [x, y]_1 \bullet m = x \bullet ( y \bullet_1 m ) - y \bullet ( x \bullet_1 m) + x \bullet_1 ( y \bullet m ) - y \bullet_1 ( x \bullet m). \label{cond-44}
\end{align}
The condition (\ref{cond-11}) implies that $[~,~]_1$ is a $2$-cocycle of the Lie algebra $\mathfrak{g}$ with coefficients in itself. The condition (\ref{cond-22}) says that $[~,~]_1$ is a Lie bracket on $\mathfrak{g}$ and (\ref{cond-33}) says that $\bullet_1$ defines a $(\mathfrak{g} , [~,~]_1)$-module structure on $M$. Finally (\ref{cond-44}) is equivalent to the fact $(M, \bullet + \bullet_1)$ is a module for the Lie algebra $(\mathfrak{g}, [~, ~]_t)$ for $t=1$.

\begin{defn}
Let $([~,~]_t, \bullet_t)$ and $([~,~]_t', \bullet_t')$ be two infinitesimal deformations of a $\mathfrak{g}$-module $M$. They are said to be equivalent if there exist linear maps $N \in \mathrm{End} (\mathfrak{g})$ and $S \in \mathrm{End} (M)$ such that $(\mathrm{id}_\mathfrak{g} + tN , \mathrm{id}_M + tS)$ is a homomorphism from the $(\mathfrak{g}, [~,~]_t')$-module $(M, \bullet_t') $ to the $(\mathfrak{g}, [~,~]_t)$-module $(M, \bullet_t) $, i.e. the followings hold
\begin{align*}
(\mathrm{id}_\mathfrak{g} + tN) [x, y]_t' =~& [ (\mathrm{id}_\mathfrak{g} + tN)(x), (\mathrm{id}_\mathfrak{g} + tN)(y)]_t, \\
(\mathrm{id}_M + tS) ( x \bullet_t' m ) =~& (\mathrm{id}_\mathfrak{g} + tN) (x) \bullet_t (\mathrm{id}_M + tS)(m).
\end{align*}
\end{defn}

An infinitesimal deformation $([~,~]_t, \bullet_t)$ of the $\mathfrak{g}$-module $M$ is said to be trivial if it is equivalent to the undeformed one $([~,~]_t' = [~,~], \bullet_t' = \bullet)$.
Thus an infinitesimal deformation $([~,~]_t, \bullet_t)$ is trivial if and only if there exists $N \in \mathrm{End} (\mathfrak{g})$ and $S \in \mathrm{End} (M)$ satisfying
\begin{align}
[x, y]_1 =~& [Nx, y] + [x, Ny] - N[x, y], \label{iden-111}\\
N[x, y]_1 =~& [Nx, Ny], \label{iden-222}\\
x \bullet_1 m =~& Nx \bullet m + x \bullet Sm - S ( x \bullet m), \label{iden-333}\\
S ( x \bullet_1 m) = ~& N(x) \bullet S(m),~ \text{ for } x, y \in \mathfrak{g} \text{ and } m \in M. \label{iden-444}
\end{align}
It follows from (\ref{iden-111}) and (\ref{iden-222}) that $N$ is a Nijenhuis tensor for the Lie algebra $\mathfrak{g}$. Similarly, from (\ref{iden-333}) and (\ref{iden-444}), we get that
\begin{align}\label{id-t}
N(x) \bullet S(m) = S (Nx \bullet m + x \bullet Sm - S ( x \bullet m)).
\end{align}
Thus, in a trivial infinitesimal deformation, $N$ is a Nijenhuis tensor for the Lie algebra $\mathfrak{g}$ and satisfying the identity (\ref{id-t}). In fact, any such operators $N, S$ generate a trivial infinitesimal deformation of the $\mathfrak{g}$-module $M$.

\begin{thm}
Let $\mathfrak{g}$ be a Lie algebra and $M$ be a $\mathfrak{g}$-module. Let $N \in \mathrm{End} (\mathfrak{g})$ be a Nijenhuis operator on $\mathfrak{g}$ and $S \in \mathrm{End} (M)$ satisfies the condition (\ref{id-t}). Then $([~,~]_t, \bullet_t)$ is a trivial infinitesimal deformation of the $\mathfrak{g}$-module $M$ where 
\begin{align*}
[x, y]_t =~& [x, y]+ t ([Nx, y] + [x, Ny] - N [x, y])~~  \text{ and } \\
x \bullet_t m =~& x \bullet m + t ( Nx \bullet m + x \bullet Sm - S ( x \bullet m)), \text{ for } x, y \in \mathfrak{g}, m \in M.
\end{align*}
\end{thm}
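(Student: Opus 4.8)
The plan is to establish two things in turn: first, that $([~,~]_t, \bullet_t)$ really is an infinitesimal deformation of the $\mathfrak{g}$-module $M$, i.e. that it satisfies the identities (\ref{cond-11})--(\ref{cond-44}); and second, that this deformation is trivial, witnessed by the very operators $N$ and $S$ through (\ref{iden-111})--(\ref{iden-444}). Throughout, write $[x,y]_1 := [Nx, y] + [x, Ny] - N[x,y]$ and $x \bullet_1 m := Nx \bullet m + x \bullet Sm - S(x \bullet m)$, so that by construction $[x,y]_t = [x,y] + t\,[x,y]_1$ and $x \bullet_t m = x \bullet m + t\,(x \bullet_1 m)$.

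The triviality half is essentially bookkeeping once the deformation is in hand: identities (\ref{iden-111}) and (\ref{iden-333}) hold by the very definitions of $[~,~]_1$ and $\bullet_1$; identity (\ref{iden-222}), namely $N[x,y]_1 = [Nx, Ny]$, is exactly the Nijenhuis identity for $N$ after substituting the definition of $[~,~]_1$; and identity (\ref{iden-444}), namely $S(x\bullet_1 m) = N(x) \bullet S(m)$, is precisely the hypothesis (\ref{id-t}) with $x \bullet_1 m$ written out. So $(N,S)$ satisfies the whole list characterizing a trivial deformation.

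For the deformation half, the most economical route is to transport the undeformed structure along the pair $\mathrm{id}_\mathfrak{g} + tN$ and $\mathrm{id}_M + tS$. Modulo $t^2$ these maps are invertible (inverses $\mathrm{id}_\mathfrak{g} - tN$ and $\mathrm{id}_M - tS$), and a one-line expansion shows that they intertwine $([~,~]_t, \bullet_t)$ with $([~,~], \bullet)$; the first-order parts of the intertwining identities are nothing but the defining formulas for $[x,y]_1$ and $x \bullet_1 m$, so no further hypotheses are needed for the intertwining itself. Consequently $([~,~]_t, \bullet_t)$ is the pullback of a genuine Lie-algebra-plus-module structure along a linear isomorphism, hence is itself such a structure modulo $t^2$ — which is exactly the content of (\ref{cond-11})--(\ref{cond-44}) — and is manifestly equivalent to the undeformed one. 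If one prefers an explicit verification: (\ref{cond-11}) holds because $[x,y]_1 = (\delta_{\mathrm{CE}} N)(x,y)$ is a coboundary, hence a $2$-cocycle; (\ref{cond-22}) is the standard fact, recorded just after the definition of Nijenhuis operator, that $[~,~]_1 = [~,~]_N$ is a Lie bracket when $N$ is Nijenhuis; and (\ref{cond-33}), (\ref{cond-44}) come out by expanding both sides and feeding in the Nijenhuis identity for $N$ together with (\ref{id-t}).

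The only step requiring genuine care is the pair of module-level identities (\ref{cond-33}) and (\ref{cond-44}): these are where the Nijenhuis relation on $N$ and the coupling condition (\ref{id-t}) must be combined, and a head-on attack is somewhat laborious. This is exactly the point of the transport argument above, which reduces the entire verification to the two routine first-order identities that merely re-encode the definitions of $[~,~]_1$ and $\bullet_1$.
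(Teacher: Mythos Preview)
Your triviality argument and your explicit verification of (\ref{cond-11})--(\ref{cond-44}) are correct and in line with the paper, which simply asserts these as a ``routine calculation.'' The problem is with the transport shortcut. Working modulo $t^2$, the intertwining $(\mathrm{id}+tN)[x,y]_t = [(\mathrm{id}+tN)x,(\mathrm{id}+tN)y]$ does hold tautologically at order $t^1$ (it is just the definition of $[~,~]_1$), and from this you correctly conclude that $[~,~]_t$ satisfies Jacobi modulo $t^2$. But Jacobi modulo $t^2$ is only (\ref{cond-11}); condition (\ref{cond-22}) is the $t^2$-coefficient of the Jacobi identity for $[~,~]_t$, and likewise (\ref{cond-33}) is the $t^2$-coefficient of the module identity. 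So ``such a structure modulo $t^2$'' is \emph{not} the content of (\ref{cond-11})--(\ref{cond-44}), only of (\ref{cond-11}) and (\ref{cond-44}). Your final claim that the transport argument ``reduces the entire verification to the two routine first-order identities'' is therefore incorrect as stated.

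The transport idea can be repaired, but not by avoiding the hypotheses. The intertwining identities $(\mathrm{id}+tN)[x,y]_t = [(\mathrm{id}+tN)x,(\mathrm{id}+tN)y]$ and $(\mathrm{id}+tS)(x\bullet_t m) = (\mathrm{id}+tN)x \bullet (\mathrm{id}+tS)m$ in fact hold \emph{exactly}, not merely to first order: their $t^2$-components are precisely $N[x,y]_1 = [Nx,Ny]$ (the Nijenhuis identity) and $S(x\bullet_1 m) = Nx \bullet Sm$ (hypothesis (\ref{id-t})). With exact intertwining, and with $\mathrm{id}+tN$, $\mathrm{id}+tS$ invertible over $\mathbb{K}[[t]]$ (or $\mathbb{K}(t)$), the full Jacobi and module identities for $([~,~]_t,\bullet_t)$ follow, delivering all four conditions at once. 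So the hypotheses are not bypassed by the transport argument; they enter exactly when you push the intertwining past first order. With this correction your approach is a genuine alternative to the paper's bare direct computation, and a cleaner one.
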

\begin{proof}
It is a routine calculation to verify that the identities (\ref{cond-11})-(\ref{cond-44}) holds. Hence $([~,~]_t, \bullet_t)$ is a deformation of the $\mathfrak{g}$-module $M$. Finally, the conditions (\ref{iden-111})-(\ref{iden-444}) of the triviallity of a deformation suggests that $([~,~]_t, \bullet_t)$ is trivial.
\end{proof}

Note that the conditions that $N$ is a Nijenhuis tensor and $S$ satisfies the identity (\ref{id-t}) can be expressed simply by the following result.

\begin{prop}
Let $\mathfrak{g}$ be a Lie algebra and $M$ be a $\mathfrak{g}$-module. A linear map $N \in \mathrm{End} (\mathfrak{g})$ is a Nijenhuis tensor on $\mathfrak{g}$ and a linear map $S \in \mathrm{End} (M)$ satisfies the identity (\ref{id-t}) if and only if $N \oplus S : \mathfrak{g} \oplus M \rightarrow \mathfrak{g} \oplus M$ is a Nijenhuis operator  on the semi-direct product Lie algebra $\mathfrak{g} \ltimes M$.
\end{prop}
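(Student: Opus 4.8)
The plan is to compute the Nijenhuis operator condition for the map $N \oplus S$ on the semi-direct product $\mathfrak{g} \ltimes M$ directly, using the bracket \eqref{semi-dir-brkt}, and show that after separating the $\mathfrak{g}$-component and the $M$-component of the resulting identity, one obtains precisely the Nijenhuis condition for $N$ on $\mathfrak{g}$ together with the identity \eqref{id-t} for $S$. Since everything is an iff, I would simply verify that the single equation
\[
[(N\oplus S)(x,m),(N\oplus S)(y,n)] = (N\oplus S)\big([(N\oplus S)(x,m),(y,n)] + [(x,m),(N\oplus S)(y,n)] - (N\oplus S)[(x,m),(y,n)]\big)
\]
holds for all $(x,m),(y,n)\in\mathfrak{g}\oplus M$ if and only if the two stated conditions hold.

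The first step is to expand the left-hand side: $[(Nx,Sm),(Ny,Sn)] = \big([Nx,Ny],\ Nx\bullet Sn - Ny\bullet Sm\big)$. The second step is to expand the right-hand side similarly; its $\mathfrak{g}$-component is $N\big([Nx,y]+[x,Ny]-N[x,y]\big)$ and its $M$-component is
\[
S\big(Nx\bullet n + x\bullet Sn + Sm \bullet \text{-terms} \ldots\big),
\]
which after careful bookkeeping of the six $\bullet$-terms coming from the three brackets becomes $S\big(Nx\bullet n + x\bullet Sn - N x\bullet \text{-}\,\ldots\big)$; the precise form that should appear is $S$ applied to the right-hand side of \eqref{id-t} with the extra $N$ and $S$ distributed appropriately. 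The third step: the $\mathfrak{g}$-component equation, namely $[Nx,Ny] = N([Nx,y]+[x,Ny]-N[x,y])$, is exactly the statement that $N$ is a Nijenhuis operator on $\mathfrak{g}$; this is independent of $m,n$. The fourth step: the $M$-component equation, after using that $N$ is already Nijenhuis on $\mathfrak{g}$ (so that cross-terms involving only brackets in $\mathfrak{g}$ cancel), reduces to $Nx\bullet Sn - Ny\bullet Sm$ equals $S$ applied to the natural combination, and by skew-symmetry in $(x,m)\leftrightarrow(y,n)$ it suffices to match the coefficient of a single ``slot'', which is precisely \eqref{id-t}: $N(x)\bullet S(n) = S(Nx\bullet n + x\bullet Sn - S(x\bullet n))$.

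I expect the main obstacle to be purely organizational rather than conceptual: the $M$-component of the right-hand side produces six terms of the form $(\text{something})\bullet(\text{something})$ coming from the three bracket expansions, and one must carefully track which carry an outer $S$, which carry an inner $S$, and which carry an $N$ on the $\mathfrak{g}$-slot, then regroup so that the pieces not matching the two desired identities cancel against each other (using the module axiom $[x,y]\bullet m = x\bullet(y\bullet m) - y\bullet(x\bullet m)$ where brackets get nested). Once the bookkeeping is done, the equivalence is immediate because the $\mathfrak{g}$- and $M$-components of the defining equation are logically independent and correspond exactly to the two hypotheses. I would therefore present this as a ``routine but careful'' verification, displaying the expansions of both sides and pointing out the term-by-term identification with the Nijenhuis condition and with \eqref{id-t}.
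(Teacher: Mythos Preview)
Your approach is correct and is exactly the natural direct verification; the paper itself states this proposition without proof, so there is nothing to compare against beyond confirming your computation goes through.

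One small correction to your expectations: the verification is cleaner than you anticipate. Writing out the $M$-component of the Nijenhuis identity for $N\oplus S$ gives
\[
Nx\bullet Sn - Ny\bullet Sm \;=\; S\big(Nx\bullet n + x\bullet Sn - S(x\bullet n)\big) - S\big(Ny\bullet m + y\bullet Sm - S(y\bullet m)\big),
\]
which is visibly the difference of two instances of \eqref{id-t}, one in the variables $(x,n)$ and one in $(y,m)$. Setting $y=0$, $m=0$ recovers \eqref{id-t}; conversely \eqref{id-t} implies the full identity by linearity. No use of the module axiom $[x,y]\bullet m = x\bullet(y\bullet m)-y\bullet(x\bullet m)$ is needed, and the $\mathfrak{g}$- and $M$-components are already completely decoupled, so you do not need ``$N$ already Nijenhuis'' to simplify the $M$-side. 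Your fourth step therefore works, but for simpler reasons than you suggest.
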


\begin{defn}\label{defn-nij-str}
Let $\mathfrak{g}$ be a Lie algebra and $M$ be a $\mathfrak{g}$-module. A pair $(N, S)$ consisting of linear maps $N \in \mathrm{End} (\mathfrak{g})$ and $S \in \mathrm{End}(M)$ is called a Nijenhuis structure on $M$ if $N$ and $S^*$ generate a trivial infinitesimal deformation of the dual $\mathfrak{g}$-module $M^*$. 
\end{defn}

Note that the condition of the above definition is equivalent to the fact that $N$ is a Nijenhuis tensor on $\mathfrak{g}$ and 
\begin{align}\label{nij-second}
N(x) \bullet S(m) = S ( N(x) \bullet m) + x \bullet S^2(m) - S ( x \bullet S(m)), ~ \text{for } x \in \mathfrak{g}, m \in M.
\end{align}

Let $N : \mathfrak{g} \rightarrow \mathfrak{g}$ be a Nijenhuis operator on the Lie algebra $\mathfrak{g}$. Then $(N, N^*)$ is a Nijenhuis structure on the coadjoint module $\mathfrak{g}^*$.

\begin{prop}
Let $(N, S)$ be a Nijenhuis structure on a $\mathfrak{g}$-module $M$. Then the pairs $(N^i, S^i)$ are Nijenhuis structures on the $\mathfrak{g}$-module $M$, for all $i \in \mathbb{N}$.
\end{prop}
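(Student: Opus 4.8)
The plan is to deduce the statement from two results already in the paper, avoiding any direct verification of the defining identities for $N^i$ and $S^i$. The key observation is that a Nijenhuis structure on a module is really a Nijenhuis operator of a special shape on a semi-direct product, and that powers of Nijenhuis operators are again Nijenhuis.

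First I would translate the hypothesis. By the Proposition immediately preceding Definition~\ref{defn-nij-str}, applied to the dual module $M^*$ together with the operator $S^*$, the assertion that $(N,S)$ is a Nijenhuis structure on $M$ (i.e.\ $N$ is a Nijenhuis operator on $\mathfrak{g}$ and $S^*$ satisfies the identity~(\ref{id-t}) for $M^*$) is exactly the same as the assertion that the diagonal map $N\oplus S^* : \mathfrak{g}\oplus M^* \rightarrow \mathfrak{g}\oplus M^*$ is a Nijenhuis operator on the semi-direct product Lie algebra $\mathfrak{g}\ltimes M^*$. From now on we work with this single Nijenhuis operator on $\mathfrak{g}\ltimes M^*$.

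Next, fix $i\in\mathbb{N}$ and apply part~(i) of the Proposition in Section~\ref{sec-prelim} --- with the Lie algebra taken to be $\mathfrak{g}\ltimes M^*$ and the Nijenhuis operator taken to be $N\oplus S^*$ --- to conclude that $(N\oplus S^*)^i$ is again a Nijenhuis operator on $\mathfrak{g}\ltimes M^*$. Now $(N\oplus S^*)^i$ is computed componentwise: $(N\oplus S^*)^i = N^i \oplus (S^*)^i = N^i \oplus (S^i)^*$, the last equality because $(S^i)^* = (S^*)^i$. Thus $N^i \oplus (S^i)^*$ is a Nijenhuis operator of diagonal form on $\mathfrak{g}\ltimes M^*$, and reading the Proposition preceding Definition~\ref{defn-nij-str} in the reverse direction (again for $M^*$, with $S$ replaced by $S^i$) gives that $N^i$ is a Nijenhuis operator on $\mathfrak{g}$ and $(S^i)^*$ satisfies~(\ref{id-t}) for $M^*$, i.e.\ $(N^i, S^i)$ is a Nijenhuis structure on $M$. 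Since $i$ was arbitrary, this proves the proposition.

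In this route there is no real obstacle; the only place demanding care is the dualization at the two ends --- one must check that the identity~(\ref{nij-second}) characterizing a Nijenhuis structure $(N,S)$ on $M$ corresponds, under the pairing between $M$ and $M^*$, to the identity~(\ref{id-t}) for the pair $(N,S^*)$ on $M^*$, so that passing to $M^*$, iterating, and passing back is legitimate. If one wanted instead to stay on $M$, the alternative is an induction on $i$: $N^{i+1}$ is Nijenhuis by Section~\ref{sec-prelim}, and one expands $N^{i+1}(x)\bullet S^{i+1}(m)$ by first applying the $(N,S)$-identity~(\ref{nij-second}) to the arguments $N^i(x)$ and $S^i(m)$ and then invoking the inductive hypothesis for $(N^i,S^i)$ together with the Nijenhuis relation for $N$. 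This computation is the laborious core, which is precisely why the semi-direct product argument is the one I would present.
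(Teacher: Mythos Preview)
Your argument is correct. The paper states this proposition without proof, so there is nothing to compare against directly; but your route is almost certainly the intended one, since the paper places the characterization ``$(N,S)$ is a Nijenhuis structure on $M$ $\Leftrightarrow$ $N\oplus S^*$ is a Nijenhuis operator on $\mathfrak{g}\ltimes M^*$'' immediately before Definition~\ref{defn-nij-str} and records in Section~\ref{sec-prelim} that powers of Nijenhuis operators are Nijenhuis. Your chain
\[
(N,S)\text{ Nijenhuis str.}\ \Longleftrightarrow\ N\oplus S^*\text{ Nijenhuis on }\mathfrak{g}\ltimes M^*\ \Longrightarrow\ (N\oplus S^*)^i=N^i\oplus (S^i)^*\text{ Nijenhuis}\ \Longleftrightarrow\ (N^i,S^i)\text{ Nijenhuis str.}
\]
is sound, and the dualization you flag as the only delicate point is indeed harmless: the equivalence between \eqref{id-t} for $(N,S^*)$ on $M^*$ and \eqref{nij-second} for $(N,S)$ on $M$ follows by pairing both sides of \eqref{id-t} against $m\in M$ and using that $M^*$ separates points of $M$, so no finite-dimensionality is needed and no double dual enters. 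The alternative inductive computation you sketch at the end would also work but is, as you say, unnecessary given the semi-direct product shortcut.
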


Let $(N, S)$ be a Nijenhuis structure on a $\mathfrak{g}$-module $M$. Consider the deformed Lie algebra $(\mathfrak{g}, [~, ~]_N)$. We define a map $\widetilde{\bullet} : \mathfrak{g} \times M \rightarrow M$ by
\begin{align*}
x ~\widetilde{ \bullet }~ m = N(x) \bullet m - x \bullet S(m) + S ( x \bullet m).
\end{align*}

\begin{prop}
The map $\widetilde{\bullet} : \mathfrak{g} \times M \rightarrow M$ defines a representation of the Lie algebra $(\mathfrak{g}, [~,~]_N)$ on $M$.
\end{prop}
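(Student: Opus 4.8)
The plan is to verify directly that $\widetilde{\bullet}$ satisfies the module axiom over the deformed Lie algebra $(\mathfrak{g}, [~,~]_N)$, namely that
\begin{align*}
[x,y]_N ~\widetilde{\bullet}~ m = x ~\widetilde{\bullet}~ (y ~\widetilde{\bullet}~ m) - y ~\widetilde{\bullet}~ (x ~\widetilde{\bullet}~ m), \quad \text{for } x, y \in \mathfrak{g},\ m \in M.
\end{align*}
The essential input will be the two facts packaged in the Nijenhuis structure hypothesis: first, that $N$ is a Nijenhuis operator on $\mathfrak{g}$, so that $[x,y]_N = [Nx,y] + [x,Ny] - N[x,y]$ and $N[x,y]_N = [Nx,Ny]$; second, the compatibility identity \eqref{nij-second}, which in the form $N(x) \bullet S(m) = S(N(x)\bullet m) + x \bullet S^2(m) - S(x \bullet S(m))$ controls how $S$ interacts with the action.

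The computation I would carry out is as follows. First I would expand the left-hand side $[x,y]_N ~\widetilde{\bullet}~ m$ by substituting the definition of $[x,y]_N$ and then the definition of $\widetilde{\bullet}$, producing a sum of terms of the shapes $N[\cdot,\cdot]\bullet m$, $[\cdot,\cdot]\bullet S(m)$, $S([\cdot,\cdot]\bullet m)$ with the brackets being $[Nx,y]$, $[x,Ny]$, $N[x,y]$ — and here I would use $N[x,y]_N = [Nx,Ny]$ to simplify the $N$-bracket terms. Then I would expand the right-hand side $x ~\widetilde{\bullet}~ (y ~\widetilde{\bullet}~ m) - y ~\widetilde{\bullet}~ (x ~\widetilde{\bullet}~ m)$ by applying the definition of $\widetilde{\bullet}$ twice; the inner application produces $N(y)\bullet m - y \bullet S(m) + S(y\bullet m)$, and the outer application of $x~\widetilde{\bullet}~(-)$ replaces each argument with three terms, giving nine terms for each of the two sides of the commutator. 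Using the original module axiom $[u,v]\bullet m = u \bullet (v \bullet m) - v \bullet (u \bullet m)$ repeatedly, the double $\bullet$-nestings collapse into commutator-bracket terms, and the remaining discrepancy should be exactly absorbed by applications of \eqref{nij-second} (with $m$ replaced by $y \bullet m$, and with $x,y$ in various roles) together with the original module axiom for $S^2$. Matching the two sides term by term then yields the identity.

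The main obstacle I expect is purely bookkeeping: there are on the order of a dozen terms on each side, and the cancellation relies on using \eqref{nij-second} in several guises (plain, with $S$ applied, and with the roles of the two vector arguments swapped), so the danger is a sign error or a mismatched term rather than any conceptual difficulty. A cleaner route that sidesteps most of this: recall from the earlier Proposition that $N \oplus S : \mathfrak{g} \ltimes M \to \mathfrak{g} \ltimes M$ is a Nijenhuis operator on the semi-direct product $\mathfrak{g}\ltimes M$ (here using that $(N,S)$ being a Nijenhuis structure in particular forces $S$ to satisfy the relevant identity — one checks \eqref{nij-second} implies \eqref{id-t} for $S$ in this setting, or one works throughout with the dual module as in Definition \ref{defn-nij-str}). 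Hence $(\mathfrak{g}\ltimes M, [~,~]_{N\oplus S})$ is a Lie algebra, and a direct expansion shows its bracket is precisely $[(x,m),(y,n)]_{N\oplus S} = ([x,y]_N,\ x~\widetilde{\bullet}~n - y~\widetilde{\bullet}~m)$, i.e. it is the semi-direct product of $(\mathfrak{g},[~,~]_N)$ with $(M,\widetilde{\bullet})$. Since a bracket of the form \eqref{semi-dir-brkt} satisfies the Jacobi identity if and only if the underlying action is a representation, the Jacobi identity for $[~,~]_{N\oplus S}$ immediately gives that $\widetilde{\bullet}$ is a representation of $(\mathfrak{g},[~,~]_N)$. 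I would present the proof via this second route, falling back on the explicit term-by-term verification only if the identification of brackets needs spelling out.
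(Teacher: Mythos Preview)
Your ``cleaner route'' via $N\oplus S$ on $\mathfrak{g}\ltimes M$ contains two genuine errors. First, the hypothesis that $(N,S)$ is a Nijenhuis structure gives you identity \eqref{nij-second}, not identity \eqref{id-t}; these are different (subtract them and you are left with $2S(x\bullet Sm) - S^2(x\bullet m) - x\bullet S^2 m$, which has no reason to vanish), so your claim that \eqref{nij-second} implies \eqref{id-t} is false, and hence $N\oplus S$ need not be a Nijenhuis operator on $\mathfrak{g}\ltimes M$. Second, even granting that, your identification of the deformed bracket is wrong: expanding $[(x,m),(y,n)]_{N\oplus S}$ directly gives the second component $Nx\bullet n + x\bullet Sn - S(x\bullet n)$ minus its $(y,m)$-counterpart, which is \emph{not} $x~\widetilde{\bullet}~n$ (the signs on the last two terms are opposite to those in the definition of $\widetilde{\bullet}$).

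Your throwaway alternative --- ``or one works throughout with the dual module'' --- is in fact the correct argument, and it is exactly what the paper does. Since by definition $N\oplus S^*$ is a Nijenhuis operator on $\mathfrak{g}\ltimes M^*$, the deformed bracket $[~,~]_{N\oplus S^*}$ on $\mathfrak{g}\oplus M^*$ is a Lie bracket; one computes that it is the semi-direct product of $(\mathfrak{g},[~,~]_N)$ with $M^*$ under the action $x\cdot\alpha = Nx\bullet\alpha + x\bullet S^*\alpha - S^*(x\bullet\alpha)$, so this is a representation of $(\mathfrak{g},[~,~]_N)$ on $M^*$. Dualizing this representation back to $M$ yields precisely $\widetilde{\bullet}$ (the sign flip you need comes from the dualization). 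Your direct-verification approach would also work, but you should present the dual-module argument as the main line rather than as an aside.
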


\begin{proof}
Since $(N, S)$ is a Nijenhuis structure on the $\mathfrak{g}$-module $M$, the sum $N \oplus S^* : \mathfrak{g} \oplus M^* \rightarrow \mathfrak{g} \oplus M^*$ is a Nijenhuis tensor on the semi-direct product Lie algebra $\mathfrak{g} \ltimes M^*.$ The deformed bracket is given by
\begin{align*}
&[(x, \alpha ), (y, \beta )]_{N \oplus S^*} \\
&=  [ (N \oplus S^*) (x, \alpha), (y, \beta)] + [(x, \alpha), (N \oplus S^*) (y, \beta)] - (N \oplus S^*)[ (x, \alpha), (y, \beta)] \\
&= ([Nx,y] +[x, Ny] - N[x,y], N(x) \bullet \beta - y \bullet S^*(\alpha) + x \bullet S^*(\beta) - N(y) \bullet \alpha - S^* (x \bullet \beta) + S^* ( y \bullet \alpha )) \\
&= ([x,y]_N, x ~\widetilde{ \bullet }~ \beta  - y ~\widetilde{ \bullet }~ \alpha ).
\end{align*}
This shows that $(M^*, ~\widetilde{ \bullet })$ is a module over the Lie algebra $(\mathfrak{g}, [~, ~]_N)$. Hence the dual  $(M, ~\widetilde{ \bullet })$ is also a module over $(\mathfrak{g}, [~, ~]_N).$
\end{proof}

Note that, we may define a bracket $[~, ~]_\sim^T : M \times M \rightarrow M$ by using the representation given in the above proposition
\begin{align*}
[m,n]^T_\sim := T(m) ~\widetilde{ \bullet }~ n - T(n) ~\widetilde{ \bullet }~ m.
\end{align*}

\subsection{$\mathcal{ON}$-structures}

Let $T: M \rightarrow \mathfrak{g}$ be an $\mathcal{O}$-operator on $M$ over the Lie algebra $\mathfrak{g}$. Consider the Lie algebra structure on $M$ with the bracket $[~,~]^T$ given in (\ref{new-brkt}).
%\begin{align*}
%[m, n]^T = T(m) \bullet n - T(n) \bullet m, ~ \text{for } m, n \in M.
%\end{align*}
Next, let $(N, S)$ be a Nijenhuis structure on $M$ over the Lie algebra $\mathfrak{g}$. Then one can deform the bracket $[~, ~]^T$ by the linear map $S \in \mathrm{End}(M)$ and obtain a new bracket
\begin{align*}
[m, n]^T_S = [ S(m), n ]^T + [m, S(n)]^T - S ([m,n]^T), ~\text{for } m, n \in M.
\end{align*}

\begin{defn}\label{defn-on-str}
Let $T : M \rightarrow \mathfrak{g}$ be an $\mathcal{O}$-operator and $(N, S)$ be a Nijenhuis structure on $M$ over the Lie algebra $\mathfrak{g}$. The triple $(T, N, S)$ is said to be an $\mathcal{ON}$-structure on $M$ over the Lie algebra $\mathfrak{g}$ if the following conditions hold:
\begin{itemize}
\item[$\triangleright$] $N \circ T = T \circ S$,
\item[$\triangleright$] $[m, n ]^{N \circ T} = [m, n ]^T_S$, for $m, n \in M$.
\end{itemize}
\end{defn}

Here the bracket $[~, ~]^{N \circ T}$ is defined similar to (\ref{new-brkt}) where $T$ is replaced by $N \circ T$. If $(T, N, S)$ is an $\mathcal{ON}$-structure, then by the first condition of the above definition, we have
\begin{align*}
[m,n]^T_S + [m,n]^T_\sim = 2 [m,n]^{NT}.
\end{align*}
Hence by the second condition of the above definition, we get $[m,n]^T_\sim = [m,n]^T_S.$

\begin{thm}\label{nt}
Let $(T, N, S)$ be an $\mathcal{ON}$-structure on $M$ over the Lie algebra $\mathfrak{g}$. Then
\begin{itemize}
\item[(i)] $T$ is an $\mathcal{O}$-operator on $(M, \widetilde{\bullet})$ over the deformed Lie algebra $(\mathfrak{g}, [~, ~]_N)$.
\item[(ii)] $N \circ T$ is an $\mathcal{O}$-operator on $M$ over the Lie algebra $\mathfrak{g}$.
\end{itemize}
\end{thm}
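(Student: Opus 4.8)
The plan is to prove both statements by direct computation, exploiting the defining identities of an $\mathcal{ON}$-structure together with the facts already established about Nijenhuis structures. For part (i), I would start from the observation — recorded just before the theorem — that the map $\widetilde{\bullet}$ gives a genuine representation of the deformed Lie algebra $(\mathfrak{g}, [~,~]_N)$ on $M$. So it makes sense to ask whether $T$ is an $\mathcal{O}$-operator for this new data. The $\mathcal{O}$-operator condition to verify is
\begin{align*}
[T(m), T(n)]_N = T\big( T(m) \,\widetilde{\bullet}\, n - T(n) \,\widetilde{\bullet}\, m \big), \quad m, n \in M.
\end{align*}
I would expand the left-hand side using $[x,y]_N = [Nx, y] + [x, Ny] - N[x,y]$, and the right-hand side using the definition $x \,\widetilde{\bullet}\, m = N(x)\bullet m - x \bullet S(m) + S(x \bullet m)$ together with the first $\mathcal{ON}$-condition $N \circ T = T \circ S$. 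The key move is that $N \circ T = T \circ S$ lets me replace each occurrence of $N T(m)$ by $T S(m)$ (and similarly for $n$), turning the bracket terms $[NT(m), T(n)]$, $[T(m), NT(n)]$ into $[TS(m), T(n)]$, $[T(m), TS(n)]$, each of which I can then rewrite via the original $\mathcal{O}$-operator equation for $T$. After this substitution the identity to prove should reduce, after applying $T$ to the telescoping combination, precisely to the second $\mathcal{ON}$-condition $[m,n]^{N\circ T} = [m,n]^T_S$ — or rather to the equivalent form $[m,n]^T_\sim = [m,n]^T_S$ noted in the paragraph preceding the theorem. So part (i) is essentially a bookkeeping exercise once the two defining conditions are invoked in the right places.

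For part (ii), I would argue that $N \circ T$ is an $\mathcal{O}$-operator on $M$ over $\mathfrak{g}$ by combining part (i) with a transfer along the Lie algebra morphism $N$. Recall from Section \ref{sec-prelim} that $N : (\mathfrak{g}, [~,~]_N) \rightarrow (\mathfrak{g}, [~,~])$ is a morphism of Lie algebras. Composing the $\mathcal{O}$-operator $T : (M, \widetilde{\bullet}) \rightarrow (\mathfrak{g}, [~,~]_N)$ from part (i) with $N$ gives a candidate map $N \circ T : M \rightarrow (\mathfrak{g}, [~,~])$. To check it is an $\mathcal{O}$-operator for the \emph{original} module structure $\bullet$, I would compute
\begin{align*}
[NT(m), NT(n)] = N [T(m), T(n)]_N = N\, T\big( T(m)\,\widetilde{\bullet}\, n - T(n)\,\widetilde{\bullet}\, m\big),
\end{align*}
using the morphism property for the first equality and part (i) for the second. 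Then I need to show the right-hand side equals $NT\big( NT(m) \bullet n - NT(n) \bullet m\big)$, i.e. that $T(m)\,\widetilde{\bullet}\, n - T(n)\,\widetilde{\bullet}\, m$ agrees, after applying $NT$, with $NT(m)\bullet n - NT(n)\bullet m$. Expanding $T(m)\,\widetilde{\bullet}\, n = NT(m)\bullet n - T(m)\bullet S(n) + S(T(m)\bullet n)$ and using $NT = TS$ once more, the extra terms $-T(m)\bullet S(n) + S(T(m)\bullet n)$ and their $m \leftrightarrow n$ counterparts should, after applying $NT = TS$ and the Nijenhuis-structure identity (\ref{nij-second}) for $S$, cancel against each other. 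Alternatively, and perhaps more cleanly, part (ii) can be read off directly from the two $\mathcal{ON}$-conditions without going through (i): the equation $[m,n]^{N\circ T} = [m,n]^T_S$ says the "expected bracket" for $N\circ T$ coincides with a deformation of $[~,~]^T$ by $S$, and unwinding this using $N\circ T = T\circ S$ and the fact that $[~,~]^T$ comes from the $\mathcal{O}$-operator $T$ yields exactly $[NT(m), NT(n)] = NT(NT(m)\bullet n - NT(n)\bullet m)$.

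The main obstacle I anticipate is purely organizational: keeping track of which of the several "brackets on $M$" — $[~,~]^T$, $[~,~]^T_S$, $[~,~]^T_\sim$, $[~,~]^{N\circ T}$ — appears at each stage, and making sure every use of $N\circ T = T\circ S$ is legitimate (one cannot cancel $T$ since $T$ need not be injective, so all identities must be derived \emph{after} applying $T$, never before). The Nijenhuis-structure identity (\ref{nij-second}) will be needed precisely to handle the mixed terms involving $S$ composed with the action $\bullet$, and this is the one place where the hypothesis that $(N,S)$ is a Nijenhuis \emph{structure} (not merely that $N$ is a Nijenhuis operator) genuinely enters. Once the roles are sorted out, both parts follow by the substitution strategy above with no further input.
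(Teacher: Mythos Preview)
Your plan for part (i) is exactly the paper's argument: apply $T$ to $[m,n]^T_\sim = [m,n]^T_S$, expand $[m,n]^T_S$, use that $T$ is an $\mathcal{O}$-operator to turn $T([a,b]^T)$ into $[Ta,Tb]$, swap $TS$ for $NT$, and recognise $[Tm,Tn]_N$.

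For part (ii) you are correct but make it harder than necessary, and your diagnosis of where identity (\ref{nij-second}) enters is off. The ``extra terms'' you worry about,
\[
-\,T(m)\bullet S(n) + S\big(T(m)\bullet n\big) + T(n)\bullet S(m) - S\big(T(n)\bullet m\big),
\]
vanish \emph{identically} (not merely after applying $NT$), because they are exactly $[m,n]^T_\sim - [m,n]^{NT}$, and the equality $[m,n]^T_\sim = [m,n]^T_S = [m,n]^{NT}$ was already recorded in the paragraph preceding the theorem using only the two $\mathcal{ON}$-axioms. So (\ref{nij-second}) is not needed anywhere in the computation; its sole role is upstream, in ensuring that $\widetilde{\bullet}$ is a genuine representation so that the statement of (i) is meaningful. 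The paper's proof of (ii) is then the one-liner
\[
NT\big([m,n]^{NT}\big) = NT\big([m,n]^T_S\big) = N\big([Tm,Tn]_N\big) = [NTm,NTn],
\]
using the second $\mathcal{ON}$-condition, the displayed identity from (i), and the morphism property of $N$. Your ``alternatively'' is heading toward this, and is the cleaner route.
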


\begin{proof}
(i) For any $m,n \in M$, we have
\begin{align}
T( [m, n]_\sim^T) = T ([m,n]^T_S) =~& T ( [Sm, n]^T + [m, Sn]^T - S [m,n]^T ) \nonumber \\
=~& [ TS(m), T(n)] + [ T(m), TS(n)] - TS [m,n]^T \nonumber \\
=~& [ NT(m), T(n)] + [T(m), NT(n)] - N [T(m), T(n)] ~~(\text{as } TS = NT) \nonumber\\
=~& [T(m), T(n)]_N. \label{random}
\end{align}
%This shows that $T$ is an $\mathcal{O}$-operator on  $(M, \widetilde{\bullet} )$ over the Lie algebra $(\mathfrak{g}, [~, ~]_N).$

(ii) By (\ref{random}) and the fact that $N$ is a Nijenhuis tensor, we have
\begin{align*}
NT ([m,n]^{NT}) = NT ([m,n]^T_S) = N ( [Tm, Tn]_N ) = [NT(m), NT(n)].
\end{align*}
Hence $N \circ T$ is an $\mathcal{O}$-operator on $M$ over the Lie algebra $\mathfrak{g}$.
\end{proof}

In a Poisson-Nijenhuis manifold $(M, \pi, N)$, it is known that the Poisson structures $\pi$ and $N \pi$ are compatible \cite{ks-magri}. Here we prove an $\mathcal{O}$-operator version of the above result.

\begin{prop}\label{nt-com}
Let $(T, N, S)$ be an $\mathcal{ON}$-structure on $M$ over the Lie algebra $\mathfrak{g}$. Then $T$ and $N \circ T$ are compatible $\mathcal{O}$-operators.
\end{prop}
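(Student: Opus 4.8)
The plan is to verify the defining identity (\ref{comp-o-equiv}) for the pair $T_1 = T$ and $T_2 = N \circ T$ directly, using the two consequences of the $\mathcal{ON}$-structure that were extracted in Theorem \ref{nt}: namely that $N \circ T$ is an $\mathcal{O}$-operator, and the key computation (\ref{random}) asserting $T([m,n]_S^T) = [T(m),T(n)]_N$ together with $TS = NT$. First I would fix $m, n \in M$ and write down the left-hand side of (\ref{comp-o-equiv}) with $T_1 = T$, $T_2 = NT$, i.e. $[T(m), NT(n)] + [NT(m), T(n)]$. Using that $N$ is a Nijenhuis tensor on $\mathfrak{g}$, I would rewrite this combination via the Nijenhuis identity as $N\big([NT(m), T(n)] + [T(m), NT(n)]\big) = [NT(m), NT(n)] + N^2[T(m), T(n)] \;-\; (\text{correction terms})$; more usefully, I would recognize $[T(m), NT(n)] + [NT(m), T(n)]$ as precisely $[T(m), T(n)]_N + N[T(m),T(n)]$, which is what appears after expanding the deformed bracket $[~,~]_N$.

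Next I would compute the right-hand side of (\ref{comp-o-equiv}), which is
\[
T\big( NT(m) \bullet n - NT(n) \bullet m \big) + NT\big( T(m) \bullet n - T(n) \bullet m \big).
\]
Since $T$ is an $\mathcal{O}$-operator, the second summand equals $NT([m,n]^T) = N[T(m), T(n)]$. For the first summand, I would use $NT = TS$ to replace $NT(m)$ by $TS(m)$ and $NT(n)$ by $TS(n)$, so that $NT(m)\bullet n - NT(n)\bullet m = TS(m)\bullet n - TS(n)\bullet m = [S(m), n]^T + \big(T(m)\bullet S(n) - \text{?}\big)$ — here I must be a little careful: the expression is $[Sm, n]^T + [m, Sn]^T - S[m,n]^T$ only up to the terms coming from how $S$ interacts, but in fact $TS(m)\bullet n - TS(n)\bullet m$ is exactly the ``$[Sm,n]^T - [Sn,m]^T$-part''; combining with the structure I should arrive at $T\big(NT(m)\bullet n - NT(n)\bullet m\big)$ being comparable to $T([m,n]^T_S)$ plus lower-order pieces. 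The cleanest route is: apply $T$ to $[m,n]^T_S = [Sm,n]^T + [m,Sn]^T - S[m,n]^T$ to get, by the $\mathcal{O}$-operator property of $T$, $[TS(m),T(n)] + [T(m),TS(n)] - TS[m,n]^T = [NT(m),T(n)] + [T(m),NT(n)] - N[T(m),T(n)] = [T(m),T(n)]_N$, which is exactly (\ref{random}). Then the first summand on the right of (\ref{comp-o-equiv}) equals $T\big(NT(m)\bullet n - NT(n)\bullet m\big)$, and I would match this against $[NT(m),T(n)] + [T(m),NT(n)] - N[T(m),T(n)]$.

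Assembling the two sides: the right-hand side of (\ref{comp-o-equiv}) becomes $\big([NT(m),T(n)] + [T(m),NT(n)] - N[T(m),T(n)]\big) + N[T(m),T(n)] = [NT(m),T(n)] + [T(m),NT(n)]$, which is exactly the left-hand side. Hence (\ref{comp-o-equiv}) holds for the pair $(T, NT)$, so $T + NT$ is an $\mathcal{O}$-operator and $T$, $N\circ T$ are compatible. I expect the main obstacle to be purely bookkeeping: correctly identifying that $T(NT(m)\bullet n - NT(n)\bullet m)$ coincides with $T([m,n]^T_S)$ via the relation $NT = TS$ (one must check that the ``$S$ applied to the bracket'' term $-TS[m,n]^T = -N[T(m),T(n)]$ is produced correctly and does not get double-counted against the $NT([m,n]^T)$ term from the second summand). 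Once that identification is made, everything collapses to the Nijenhuis identity for $N$ on $\mathfrak g$, and no genuinely new computation is needed beyond what Theorem \ref{nt} already supplies; indeed this proposition is essentially a corollary of (\ref{random}) plus the $\mathcal{O}$-operator property of $NT$.
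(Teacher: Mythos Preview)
Your argument is correct and is essentially the paper's own proof, recast as a direct verification of the compatibility identity (\ref{comp-o-equiv}) rather than of the $\mathcal{O}$-operator condition for $T+N\circ T$; both computations rest on (\ref{random}), on $TS=NT$, and on the fact that $T$ and $N\circ T$ are $\mathcal{O}$-operators. One clarification: the step $T\big(NT(m)\bullet n - NT(n)\bullet m\big)=T\big([m,n]^T_S\big)$ does \emph{not} follow from $NT=TS$ alone (that relation only gives $NT(m)\bullet n - NT(n)\bullet m = TS(m)\bullet n - TS(n)\bullet m$, which differs from $[m,n]^T_S$ by the terms $T(m)\bullet S(n)-T(n)\bullet S(m)-S[m,n]^T$); the equality $[m,n]^{NT}=[m,n]^T_S$ is exactly the second defining condition in Definition~\ref{defn-on-str}, and once you invoke it explicitly your cancellation goes through as written.
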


\begin{proof}
For any $m, n \in M$, we have
\begin{align*}
[m, n]^{ T + N \circ T } = [m,n ]^T + [m,n]^{ N \circ T } = [m,n]^T + [m,n]^T_S.
\end{align*}
Hence
\begin{align*}
&~(T + N \circ T )( [m, n]^{ T + N \circ T } )\\ &= T ([m,n]^T) + T ([m,n]^T_S )  +  (N \circ T) ([m,n]^T) + (N \circ T) ([m,n]^T_S ) \\
&= T ([m,n]^T) + T ( [Sm, n]^T + [m, Sn]^T - S [m,n]^T) + (N \circ T)([m,n]^T) + (N \circ T) ([m,n]^{N \circ T}) \\
&= T ([m,n]^T) + T ( [Sm, n]^T + [m, Sn]^T) + (N \circ T) ([m,n]^{N \circ T}) \\
&= [Tm, Tn] + ( [TS(m), T(n)] + [T(m), TS(n)] ) + [NT(m), NT(n)] \\ & \qquad \qquad \qquad~~(\text{as } T \text{ and } N \circ T \text{ are } \mathcal{O}\text{-operators})\\
&= [ (T + NT)(m), (T + NT)(n)] ~~~(\text{as } TS = NT). 
\end{align*}
This shows that the sum $T + N \circ T$ is an $\mathcal{O}$-operator on $M$ over $\mathfrak{g}$. Hence the proof.
\end{proof}

In the next proposition, we construct an $\mathcal{ON}$-structure from compatible $\mathcal{O}$-operators.

\begin{prop}
Let $T_1, T_2 : M \rightarrow \mathfrak{g}$ be two compatible $\mathcal{O}$-operators on $M$ over the Lie algebra $\mathfrak{g}$. If $T_2$ is invertible then $(T_2, N = T_1 \circ T_2^{-1}, S = T_2^{-1} \circ T_1)$ is an $\mathcal{ON}$-structure, for $i=1, 2$.
\end{prop}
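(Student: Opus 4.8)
The plan is to verify directly that the triple $(T_2, N, S)$ with $N = T_1 \circ T_2^{-1}$ and $S = T_2^{-1} \circ T_1$ satisfies all the defining conditions of an $\mathcal{ON}$-structure. The work breaks into three pieces: first, that $(N, S)$ is a Nijenhuis structure on $M$ over $\mathfrak{g}$; second, that $N \circ T_2 = T_2 \circ S$; and third, that $[m,n]^{N \circ T_2} = [m,n]^{T_2}_S$. The middle condition is immediate: $N \circ T_2 = T_1 \circ T_2^{-1} \circ T_2 = T_1 = T_2 \circ T_2^{-1} \circ T_1 = T_2 \circ S$, and moreover $N \circ T_2 = T_1$, which I will use throughout.

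For the Nijenhuis structure condition, recall from Definition~\ref{defn-nij-str} that I must check (a) $N = T_1 \circ T_2^{-1}$ is a Nijenhuis operator on $\mathfrak{g}$, and (b) the compatibility identity~(\ref{nij-second}). Part (a) is exactly Proposition~\ref{prop-inver-nij}: since $T_1, T_2$ are compatible and $T_2$ is invertible, $N = T_1 \circ T_2^{-1}$ is a Nijenhuis operator on $\mathfrak{g}$. For (b), I would substitute $S = T_2^{-1} \circ T_1$ and $N = T_1 \circ T_2^{-1}$ into~(\ref{nij-second}), write every $x \in \mathfrak{g}$ as $x = T_2(m)$ for a unique $m \in M$, and reduce the identity to a statement purely about $T_1(m) \bullet (\,\cdot\,)$ and $T_2(m) \bullet (\,\cdot\,)$ that follows from the $\mathcal{O}$-operator equations for $T_1$, $T_2$, and their sum (i.e.\ from~(\ref{comp-o-equiv})); applying $T_2^{-1}$ where needed, this is the module-level shadow of the Nijenhuis identity on $\mathfrak{g}$ and should fall out by the same bookkeeping as the proof of Proposition~\ref{prop-inver-nij}.

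For the third condition, the key is to identify $[m,n]^{T_2}_S$ with $[m,n]^{T_1}$ and likewise identify $[m,n]^{N \circ T_2} = [m,n]^{T_1}$. The latter is trivial since $N \circ T_2 = T_1$. For the former, unwinding the definition gives
\begin{align*}
[m,n]^{T_2}_S &= [Sm, n]^{T_2} + [m, Sn]^{T_2} - S\big([m,n]^{T_2}\big)\\
&= T_2(Sm)\bullet n - T_2(n)\bullet Sm + T_2(m)\bullet Sn - T_2(Sn)\bullet m - T_2^{-1}\big(T_1([m,n]^{T_2})\big),
\end{align*}
and since $T_2 \circ S = T_1$ the first four terms become $T_1(m)\bullet n - T_2(n)\bullet Sm + T_2(m)\bullet Sn - T_1(n)\bullet m$, while $T_1([m,n]^{T_2}) = T_1(T_2(m)\bullet n - T_2(n)\bullet m)$ can be expanded using the compatibility identity~(\ref{comp-o-equiv}) to rewrite $T_2^{-1}$ of it. The goal is that everything collapses to $T_1(m)\bullet n - T_1(n)\bullet m = [m,n]^{T_1}$; this is essentially the pre-Lie/$\mathcal{O}$-operator compatibility already recorded in the paper, and the main obstacle will be managing the cancellations cleanly rather than any conceptual difficulty. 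Finally, with both conditions verified, $(T_2, N, S)$ is an $\mathcal{ON}$-structure, which completes the proof. (The clause ``for $i = 1, 2$'' in the statement appears vestigial; the content is the single assertion just proved.)
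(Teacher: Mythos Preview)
Your outline matches the paper's proof in structure and ingredients: verify $N$ is Nijenhuis via Proposition~\ref{prop-inver-nij}, verify~(\ref{nij-second}), note $N T_2 = T_2 S = T_1$, and check the bracket condition. The one place where your sketch is thinner than the paper is step~(b). The paper does not attack~(\ref{nij-second}) by direct substitution; it first isolates the intermediate identity
\[
S\big(T_2(m)\bullet n - T_2(n)\bullet m\big) \;=\; T_2(m)\bullet S(n) - T_2(n)\bullet S(m),
\]
obtained by writing compatibility~(\ref{comp-o-equiv}) with $T_1 = T_2 S$, comparing against the $\mathcal{O}$-operator identity for $T_2$, and cancelling $T_2$. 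This identity then does double duty: replacing $n$ by $S(n)$ and combining with the $\mathcal{O}$-operator identity for $T_1 = T_2 S$ yields~(\ref{nij-second}), and the same identity immediately gives $[m,n]^{T_2}_S = [m,n]^{T_1} = [m,n]^{N\circ T_2}$ in one line. Your plan would in effect rediscover this identity separately inside each of the two computations; isolating it up front is what makes the paper's argument clean. Also, your remark that (b) ``should fall out by the same bookkeeping as the proof of Proposition~\ref{prop-inver-nij}'' undersells the step: unlike that proposition, here the $\mathcal{O}$-operator identity for $T_1$ itself (not just compatibility and the identity for $T_2$) is genuinely needed.
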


\begin{proof}
We know from Proposition \ref{prop-inver-nij} that $N = T_1 \circ T_2^{-1}$ is a Nijenhuis tensor on the Lie algebra $\mathfrak{g}$. We will now prove that $S = T_2^{-1} \circ T_1$ satisfies (\ref{nij-second}) to make the pair $(N, S)$ a Nijenhuis structure on $M$ over the Lie algebra $\mathfrak{g}$.

Since $T_1$ and $T_2$ are compatible $\mathcal{O}$-operators, we have
\begin{align*}
[T_1 (m), T_2 (n)] + [T_2 (m), T_1(n)] = T_1 ( T_2(m) \bullet n - T_2(n) \bullet m) + T_2 ( T_1 (m) \bullet n - T_1(n) \bullet m).
\end{align*} 
Since $T_1 = T_2 \circ S$,
\begin{align}\label{eqn-pp}
[T_2S (m), T_2 (n)] + [T_2 (m), T_2S(n)] = T_2S ( T_2(m) \bullet n - T_2(n) \bullet m) + T_2 ( T_2S (m) \bullet n - T_2S(n) \bullet m).
\end{align}
On the other hand, $T_2$ is an $\mathcal{O}$-operator implies that
\begin{align}\label{eqn-qq}
[T_2S (m), T_2 (n)] + [T_2 (m), T_2S(n)]  = T_2 \big( T_2 S(m) \bullet n - T_2 (n) \bullet S(m) + T_2 (m) \bullet S(n) - T_2 S(n) \bullet m \big).
\end{align}
From (\ref{eqn-pp}) and (\ref{eqn-qq}) and using the fact that $T_2$ is invertible, we get
\begin{align}\label{eqn-rr}
S ( T_2 (m) \bullet n - T_2(n) \bullet m ) = T_2(m) \bullet S(n) - T_2(n) \bullet S(m).
\end{align}
By replacing $n$ by $S(n)$,
\begin{align}\label{eqn-ss}
T_2 (m) \bullet S^2 (n) - S ( T_2 (m) \bullet S(n)) = - S ( T_2 S(n) \bullet m) + T_2 S(n) \bullet S(m).
\end{align}
As $T_1 = T_2 \circ S$ and $T_2$ are $\mathcal{O}$-operators,
\begin{align*}
T_2 \circ S ( [m,n]^{T_2 \circ S} ) = [T_2 S (m), T_2 S (n) ] = T_2 ( [S(m), S(n)]^{T_2}).
\end{align*}
The invertibility of $T_2$ implies that $S ( [m,n]^{T_2 \circ S} ) = [S(m), S(n)]^{T_2}$, or, equivalently,
\begin{align}\label{eqn-tt}
 S ( T_2 S(m) \bullet n - T_2 S(n) \bullet m) = T_2 S(m) \bullet S(n) - T_2 S(n) \bullet S(m).
\end{align}
Finally, from (\ref{eqn-ss}) and (\ref{eqn-tt}), we get
\begin{align*}
T_2 (m) \bullet S^2(n) - S ( T_2(m) \bullet S(n)) = T_2 S(m) \bullet S(n) - S ( T_2 S(m) \bullet n).
\end{align*} 
Substitute $x = T_2 (m)$, using $T_2 S = N T_2$ and the invertibility of $T_2$,
\begin{align*}
x \bullet S^2 (n) - S ( x \bullet S(n)) = N (x) \bullet S(n) - S ( N(x) \bullet n).
\end{align*}
Hence the identity (\ref{nij-second}) follows. Thus, the pair $(N, S)$ is a Nijenhuis structure on $M$ over $\mathfrak{g}$.

Next, observe that $N \circ T_2 = T_2 \circ S = T_1$. Moreover,
\begin{align*}
[m,n]^{T_2}_S - [m,n]^{T_2 \circ S} = T_2 (m) \bullet S(n) - T_2 (n) \bullet S(m) - S (T_2 (m) \bullet n - T_2 (n) \bullet m) = 0 ~~(\text{by } (\ref{eqn-rr})).
\end{align*}
Hence $(T_2 , N = T_1 \circ T_2^{-1}, S= T_2^{-1} \circ T_1)$ is an $\mathcal{ON}$-structure on $M$ over $\mathfrak{g}$.
\end{proof}

Let $(T, N, S)$ be an $\mathcal{ON}$-structure on $M$ over the Lie algebra $\mathfrak{g}$. For any $k \geq 0$, we define $T_k := T \circ S^k = N^k \circ T$. The next lemma is analogous to the similar result for Poisson-Nijenhuis structures \cite{ks-magri}.

\begin{lemma}
For all $k, l \geq 0$, we have
\begin{align}
T_k ( [m, n]^T_{S^{k+l}}) =~& [T_k (m), T_k (n)]_{N^l}, \label{first-k}\\
[m,n]^{T_{k+l}} =~& [m,n]^T_{S^{k+l}} = S^k ([m,n]^{T_l}). \label{first-kl}
\end{align}
\end{lemma}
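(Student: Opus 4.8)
The plan is to derive both lines of \eqref{first-kl}--\eqref{first-k} from a single structural observation: \emph{$S$ is a Nijenhuis operator on the Lie algebra $M^T$}. The second $\mathcal{ON}$-condition says $[~,~]^T_S=[~,~]^{N\circ T}$, and this is a genuine Lie bracket because $N\circ T$ is an $\mathcal{O}$-operator (Theorem~\ref{nt}(ii)); so only the identity $S([m,n]^T_S)=[Sm,Sn]^T$ remains. First I would record the ``transmutation'' identity $S([m,n]^T)=T(m)\bullet Sn-T(n)\bullet Sm$, which is just the two $\mathcal{ON}$-conditions rewritten (expand $[m,n]^{N\circ T}=[m,n]^T_S$ and cancel using $N\circ T=T\circ S$). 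A short computation then combines this transmutation, applied also with the shifted arguments $m,Sn$ and $Sm,n$, with the Nijenhuis-structure identity \eqref{nij-second} to conclude $[Sm,Sn]^T=S([m,n]^{N\circ T})=S([m,n]^T_S)$. With $S$ a Nijenhuis operator on $M^T$, the Proposition on powers of a Nijenhuis operator (Section~\ref{sec-prelim}), applied to $S$ on $M^T$, yields at once: $S^{k+l}$ is a Nijenhuis operator on $M^T$; $\big([~,~]^T_{S^l}\big)_{S^k}=[~,~]^T_{S^{k+l}}$; $S^k$ is a Lie-algebra morphism $(M,[~,~]^T_{S^{k+l}})\to(M,[~,~]^T_{S^k})$; and the brackets $[~,~]^T_{S^j}$, $j\ge 0$, are pairwise compatible.

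To prove \eqref{first-kl} I would first upgrade the transmutation identity to all orders,
\[
S^j([m,n]^T)\;=\;T(m)\bullet S^j(n)-T(n)\bullet S^j(m),\qquad j\ge 0,
\]
by a (strong) induction on $j$: the cases $j=0,1$ are the definition of $[~,~]^T$ and the transmutation above; for the step one applies $S$ to the order-$j$ identity, uses \eqref{nij-second} (with $NT=TS$) to peel off one factor of $S$ from each term $S(T(m)\bullet S^j n)$, identifies the resulting error terms with lower-order instances of the identity evaluated at $Sm,Sn$, and observes they cancel, using the Nijenhuis property of $S$ on $M^T$ from the first paragraph. Granting this, the equality $[m,n]^{T_j}=[m,n]^T_{S^j}$ is immediate: since $T_j=T\circ S^j=N^j\circ T$ one has $[m,n]^{T_j}=T(S^j m)\bullet n-T(S^j n)\bullet m$, while $[m,n]^T_{S^j}=T(S^j m)\bullet n-T(n)\bullet S^j m+T(m)\bullet S^j n-T(S^j n)\bullet m-S^j([m,n]^T)$, and the difference vanishes by the order-$j$ transmutation identity. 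Taking $j=k+l$ gives $[m,n]^{T_{k+l}}=[m,n]^T_{S^{k+l}}$, and the last equality of \eqref{first-kl}, $[m,n]^T_{S^{k+l}}=\big([~,~]^{T_l}\big)_{S^k}(m,n)$, follows from the case $j=l$ (so $[~,~]^{T_l}=[~,~]^T_{S^l}$) together with $\big([~,~]^T_{S^l}\big)_{S^k}=[~,~]^T_{S^{k+l}}$ from the first paragraph.

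For \eqref{first-k} I would argue by induction on $l$, first taking $k=0$. The base cases $l=0$ (the $\mathcal{O}$-operator equation for $T$) and $l=1$ (identity \eqref{random} in the proof of Theorem~\ref{nt}) are available. For the step, write $[m,n]^T_{S^{l+1}}=\big([~,~]^T_{S^l}\big)_S(m,n)=[Sm,n]^T_{S^l}+[m,Sn]^T_{S^l}-S\big([m,n]^T_{S^l}\big)$, apply $T$, use the inductive hypothesis on the three $[~,~]^T_{S^l}$-brackets together with $T\circ S=N\circ T$, and recognise the outcome as $\big([~,~]_{N^l}\big)_N(Tm,Tn)$, which equals $[Tm,Tn]_{N^{l+1}}$ by the Proposition on powers of $N$ applied to $\mathfrak{g}$. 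For general $k$, apply $N^k$ to the $k=0$ case with $l$ replaced by $k+l$, i.e.\ to $T\big([m,n]^T_{S^{k+l}}\big)=[Tm,Tn]_{N^{k+l}}$: the left side becomes $N^kT\big([m,n]^T_{S^{k+l}}\big)=T_k\big([m,n]^T_{S^{k+l}}\big)$, and since $N^k$ is a Lie-algebra morphism from $(\mathfrak{g},[~,~]_{N^{k+l}})$ to $(\mathfrak{g},[~,~]_{N^l})$ (again by that Proposition) the right side becomes $[N^kTm,N^kTn]_{N^l}=[T_k m,T_k n]_{N^l}$, which is \eqref{first-k}.

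The step I expect to be the main obstacle is the all-orders transmutation identity: because the relevant action on $M$ keeps changing as one passes from $T$ to $N\circ T$ to $N^2\circ T$ and so on, the induction cannot be a single clean pass but must interleave the ``Nijenhuis structure'' relation \eqref{nij-second} with the compatibility fact that $S$ is a Nijenhuis operator on $M^T$; everything else --- the relabelings $T_k=N^kT=TS^k$, the two appeals to the Proposition on powers of a Nijenhuis operator (once for $S$ on $M^T$, once for $N$ on $\mathfrak{g}$), and the base case \eqref{random} --- is routine bookkeeping.
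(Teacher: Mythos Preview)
The paper does not prove this lemma; it merely records it as ``analogous to the similar result for Poisson--Nijenhuis structures'' and cites \cite{ks-magri}. So there is no paper-proof to compare against, only your outline to assess.

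Your structural observation --- that $S$ is a Nijenhuis operator on $(M,[~,~]^T)$ --- is exactly the right engine, and your derivation of it is correct: rewriting \eqref{nij-second} as ``the operator $B_x(m)=Nx\bullet m-x\bullet Sm$ commutes with $S$'' and combining this with the order-$1$ transmutation (at $m,n$, at $Sm,n$, and at $m,Sn$) forces $[Sm,Sn]^T-S([m,n]^T_S)$ to equal its own negative, hence to vanish in characteristic $0$. With that in hand, your inductive proof of the all-orders transmutation and of \eqref{first-k} goes through cleanly (for the induction on $j$ one reduces, via the commutation $B_x S=SB_x$ iterated $j$ times, to showing $S([m,n]^{T_j})=[Sm,Sn]^{T_{j-1}}$, which is exactly the Nijenhuis property of $S$ on $(M,[~,~]^T_{S^{j-1}})$ supplied by the powers-of-Nijenhuis proposition). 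Likewise the first equality of \eqref{first-kl} is equivalent to the order-$(k+l)$ transmutation, as you note, and your argument for \eqref{first-k} by induction on $l$ (base case \eqref{random}) followed by applying $N^k$ is correct.

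There is one point to flag. For the last equality of \eqref{first-kl} you silently replace $S^k([m,n]^{T_l})$ by $\big([~,~]^{T_l}\big)_{S^k}(m,n)$. These are \emph{not} the same expression: the first is the linear map $S^k$ applied to the element $[m,n]^{T_l}$, the second is the $S^k$-deformed bracket. What you prove is the second, and that is also what Theorem~\ref{final-thm-comp-o} actually uses (there one needs $[m,n]^{T_{k+l}}=[m,n]^{T_k}_{S^l}$, which follows from $[~,~]^{T_j}=[~,~]^T_{S^j}$ and the Nijenhuis-powers identity $([~,~]^T_{S^k})_{S^l}=[~,~]^T_{S^{k+l}}$). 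The literal reading, however, does not follow from the $\mathcal{ON}$-axioms: already for $k=1,\,l=0$ it would assert $Tm\bullet Sn-Tn\bullet Sm=NTm\bullet n-NTn\bullet m$, equivalently $[Sm,n]^T+[m,Sn]^T=2S[m,n]^T$, and none of the axioms (Nijenhuis of $N$, \eqref{nij-second}, $NT=TS$, or the second $\mathcal{ON}$-condition) forces this. So either the paper's displayed equality is a typo for the deformed-bracket version, or it is simply not true as written; in either case your reinterpretation is the right one and should be made explicit rather than tacit.
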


\begin{thm}\label{final-thm-comp-o}
Let $(T, N, S)$ be an $\mathcal{ON}$-structure on $M$ over the Lie algebra $\mathfrak{g}$. Then for all $k \geq 0$, the linear maps $T_k$ are $\mathcal{O}$-operators. Moreover, for all $k, l \geq 0$, the $\mathcal{O}$-operators $T_k$ and $T_l$ are compatible.
\end{thm}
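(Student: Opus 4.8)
The plan is to derive both assertions from the Lemma just proved --- that is, from the identities (\ref{first-k})--(\ref{first-kl}) --- together with part (ii) of the Proposition on powers of a Nijenhuis operator from Section \ref{sec-prelim}. First, that each $T_k$ is an $\mathcal{O}$-operator on $M$ over $\mathfrak{g}$ is immediate: setting $l=0$ in (\ref{first-kl}) gives $[m,n]^{T_k}=[m,n]^T_{S^k}$, and setting $l=0$ in (\ref{first-k}) gives $T_k\big([m,n]^T_{S^k}\big)=[T_k(m),T_k(n)]$ (recall $[~,~]_{N^0}=[~,~]$); combining the two yields $T_k\big([m,n]^{T_k}\big)=[T_k(m),T_k(n)]$, which is exactly the defining equation of an $\mathcal{O}$-operator.

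Next, for the compatibility of $T_k$ and $T_l$ one must verify that $T_k+T_l$ is an $\mathcal{O}$-operator. Since $[m,n]^{T_k+T_l}=[m,n]^{T_k}+[m,n]^{T_l}$, expanding $(T_k+T_l)\big([m,n]^{T_k+T_l}\big)$ produces four terms. The two diagonal ones, $T_k([m,n]^{T_k})$ and $T_l([m,n]^{T_l})$, equal $[T_k(m),T_k(n)]$ and $[T_l(m),T_l(n)]$ by the first part. For each cross term I would use $[m,n]^{T_l}=[m,n]^T_{S^l}$ (and symmetrically for $k$) from (\ref{first-kl}), write $T_k=N^k\circ T$, and then apply (\ref{first-k}) with first index $0$ to get $T_k([m,n]^{T_l})=N^k\big([T(m),T(n)]_{N^l}\big)$ and, symmetrically, $T_l([m,n]^{T_k})=N^l\big([T(m),T(n)]_{N^k}\big)$. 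Writing $x=T(m)$ and $y=T(n)$, so that $(T_k+T_l)(m)=(N^k+N^l)(x)$, the equality to be proved, $(T_k+T_l)\big([m,n]^{T_k+T_l}\big)=[(T_k+T_l)(m),(T_k+T_l)(n)]$, reduces --- after expanding the right-hand side and cancelling the two diagonal brackets $[N^kx,N^ky]$ and $[N^lx,N^ly]$ --- to the purely Nijenhuis identity
\[
N^k\big([x,y]_{N^l}\big)+N^l\big([x,y]_{N^k}\big)=[N^kx,N^ly]+[N^lx,N^ky],\qquad x,y\in\mathfrak{g}.
\]

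Finally, this identity I would obtain by expanding the equality $([~,~]_{N^k})_{N^l}=[~,~]_{N^{k+l}}$ of part (ii) of the quoted Proposition: rewriting $([x,y]_{N^k})_{N^l}$ entirely in terms of the undeformed bracket and the operators $N^k,N^l,N^{k+l}$ and comparing with $[x,y]_{N^{k+l}}=[N^{k+l}x,y]+[x,N^{k+l}y]-N^{k+l}[x,y]$, one sees that the ``pure'' terms $[N^{k+l}x,y]$ and $[x,N^{k+l}y]$ cancel and the remaining terms rearrange exactly into the displayed identity. I expect this expansion, with its many $N^k[N^l y,\cdot]$-type summands to track, to be the only mildly delicate computation; everything else is formal bookkeeping with (\ref{first-k})--(\ref{first-kl}). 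Equivalently, the displayed identity is the ``mixed'' vanishing-of-torsion condition for the pair $(N^k,N^l)$, which holds because, by part (iii) of the Proposition, $[~,~]_{N^k}+[~,~]_{N^l}=[~,~]_{N^k+N^l}$ is again a Lie bracket while $N^k$ and $N^l$ are individually Nijenhuis.
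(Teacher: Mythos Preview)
Your first part coincides with the paper's. For compatibility, however, the paper takes a somewhat different and more self-contained route: it shows directly that $T_k+T_{k+l}$ is an $\mathcal{O}$-operator, using $[m,n]^{T_{k+l}}=[m,n]^{T_k}_{S^l}$ from (\ref{first-kl}) together with $T_{k+l}=T_k\circ S^l$. The computation stays entirely on $M$: writing $T_k\big([m,n]^{T_k}_{S^l}\big)=T_k\big([S^l m,n]^{T_k}+[m,S^l n]^{T_k}-S^l[m,n]^{T_k}\big)$ and adding $T_{k+l}\big([m,n]^{T_k}\big)=T_k\circ S^l\big([m,n]^{T_k}\big)$ cancels the $S^l$-term, and the $\mathcal{O}$-operator property of $T_k$ (with $T_k\circ S^l=T_{k+l}$) then yields the two mixed brackets $[T_{k+l}(m),T_k(n)]+[T_k(m),T_{k+l}(n)]$. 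Your approach instead transports the cross terms to $\mathfrak{g}$ via (\ref{first-k}) with first index $0$, reducing everything to the identity $N^k[x,y]_{N^l}+N^l[x,y]_{N^k}=[N^kx,N^ly]+[N^lx,N^ky]$, which you correctly extract from $([~,~]_{N^k})_{N^l}=[~,~]_{N^{k+l}}$ in part (ii) of the Proposition. Both arguments are sound; the paper's avoids any auxiliary Nijenhuis computation on $\mathfrak{g}$, while yours makes explicit where the hierarchy of deformed brackets on $\mathfrak{g}$ actually enters.
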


\begin{proof}
We have from (\ref{first-k}) and (\ref{first-kl}) that
\begin{align*}
T_k ([m,n]^{T_k} ) = T_k ( [m,n ]^T_{S^k}) = [ T_k (m), T_k (n)]
\end{align*}
which shows that $T_k$ is an $\mathcal{O}$-operator on $M$ over $\mathfrak{g}$. To prove the second part, we first observe that
\begin{align*}
[m, n]^{T_k + T_{k+l}} = [m,n]^{T_k} + [m, n]^{T_{k+l}} = [m,n]^{T_k} + [m,n]^{T_k}_{S^l} ~~ (\text{by } (\ref{first-kl})).
\end{align*}
Hence
\begin{align*}
&(T_k + T_{k+l}) ([m,n]^{T_k + T_{k+l}} ) \\
& = T_k ([m,n]^{T_k} ) + T_k ([m,n]^{T_k}_{S^l}) + T_{k+l} ([m,n]^{T_k} ) + T_{k+l} ([m,n]^{T_k}_{S^l}) \\
&= [ T_k (m), T_k (n)] + T_k ( [S^l(m), n]^{T_k} + [ m, S^l (n)]^{T_k} - S^l [m,n]^{T_k} ) + T_k \circ S^l ([m,n]^{T_k}) + T_{k+l} ([m,n]^{T_k}_{S^l} ) \\
& = [ T_k (m), T_k (n)] + [T_{k+l} (m), T_k (n)] + [ T_k (m), T_{k+l} (n) ]+ [ T_{k+l}(m), T_{k+l} (n) ]\\
& = [ (T_k + T_{k+l})  (m), (T_k + T_{k+l}) (n)].
\end{align*}
This shows that $T_k + T_{k+l}$ is an $\mathcal{O}$-operator. Hence $T_k$ and $T_{k+l}$ are compatible.
\end{proof}

\medskip

We have mentioned earlier that classical r-matrices are Lie algebra analog of Poisson structures. Here we mention Lie algebra analog of Poisson-Nijenhuis structures and their relation with $\mathcal{ON}$-structures.

\begin{defn}
Let $\mathfrak{g}$ be a Lie algebra. A pair $({\bf r}, N)$ consisting of a classical r-matrix and a Nijenhuis tensor on $\mathfrak{g}$ is called a PN-structure on $\mathfrak{g}$ if they satisfy
\begin{itemize}
\item[$\triangleright$] $N \circ {\bf r}^\sharp = {\bf r}^\sharp \circ N^*,$
\item[$\triangleright$] $[\alpha, \beta ]^{N \circ {\bf r}^\sharp} = [\alpha, \beta ]^{{\bf r}^\sharp}_{N^*}$, for all $\alpha, \beta \in \mathfrak{g}^*$.
\end{itemize}
\end{defn}

The next result generalizes Lemma \ref{lemma-r-mat} in the realm of Poisson-Nijenhuis structures.

\begin{prop}
Let $({\bf r}, N)$ be a pair consisting of an element ${\bf r} \in \wedge^2 \mathfrak{g}$ and a linear map $N$ on $\mathfrak{g}$. Then $({\bf r}, N)$ is a PN-structure on $\mathfrak{g}$ if and only if the triple $({\bf r}^\sharp, N, N^*)$ is an $\mathcal{ON}$ structure on the coadjoint representation $\mathfrak{g}^*$ over the Lie algebra $\mathfrak{g}$.
\end{prop}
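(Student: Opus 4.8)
The plan is to observe that, after translating through the correspondence ${\bf r} \mapsto {\bf r}^\sharp$, the two defining conditions of a PN-structure on $\mathfrak{g}$ become, word for word, the two bulleted conditions of an $\mathcal{ON}$-structure, so the proof reduces to matching the ``ambient'' data on each side. The two facts that do this matching are already available: Lemma \ref{lemma-r-mat} says that ${\bf r}$ is a classical r-matrix if and only if ${\bf r}^\sharp : \mathfrak{g}^* \to \mathfrak{g}$ is an $\mathcal{O}$-operator on the coadjoint representation, and the remark following Definition \ref{defn-nij-str} (applied to the coadjoint module) says that $N$ is a Nijenhuis tensor on $\mathfrak{g}$ if and only if $(N, N^*)$ is a Nijenhuis structure on $\mathfrak{g}^*$.

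For the forward direction I would assume $({\bf r}, N)$ is a PN-structure. Then ${\bf r}$ is a classical r-matrix, so by Lemma \ref{lemma-r-mat} the map $T := {\bf r}^\sharp$ is an $\mathcal{O}$-operator on $\mathfrak{g}^*$ over $\mathfrak{g}$; and $N$ is a Nijenhuis tensor on $\mathfrak{g}$, so $(N, S)$ with $S := N^*$ is a Nijenhuis structure on $\mathfrak{g}^*$. It then remains to verify the two conditions of Definition \ref{defn-on-str} for the triple $({\bf r}^\sharp, N, N^*)$. With $T = {\bf r}^\sharp$ and $S = N^*$, the condition $N \circ T = T \circ S$ is precisely $N \circ {\bf r}^\sharp = {\bf r}^\sharp \circ N^*$, which is the first PN-condition; and the condition $[\alpha, \beta]^{N \circ T} = [\alpha, \beta]^T_S$ for $\alpha, \beta \in \mathfrak{g}^*$ is precisely $[\alpha, \beta]^{N \circ {\bf r}^\sharp} = [\alpha, \beta]^{{\bf r}^\sharp}_{N^*}$, the second PN-condition (recall $[\alpha,\beta]^T_S = [S\alpha,\beta]^T + [\alpha,S\beta]^T - S[\alpha,\beta]^T$). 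Hence $({\bf r}^\sharp, N, N^*)$ is an $\mathcal{ON}$-structure.

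For the converse I would run the same chain of equivalences backwards. If $({\bf r}^\sharp, N, N^*)$ is an $\mathcal{ON}$-structure, then by definition ${\bf r}^\sharp$ is an $\mathcal{O}$-operator on $\mathfrak{g}^*$ over $\mathfrak{g}$, so Lemma \ref{lemma-r-mat} forces ${\bf r}$ to be a classical r-matrix; and $(N, N^*)$ is a Nijenhuis structure on $\mathfrak{g}^*$, which by the reformulation of Definition \ref{defn-nij-str} forces $N$ to be a Nijenhuis tensor on $\mathfrak{g}$ (with the auxiliary identity (\ref{nij-second}) automatically holding for $S = N^*$). The two $\mathcal{ON}$-conditions then read off verbatim as the two PN-conditions, so $({\bf r}, N)$ is a PN-structure.

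Thus the argument is essentially a dictionary between the two axiom sets, and I expect no genuine obstacle. The only point that deserves an explicit mention is that the extra identity (\ref{nij-second}) built into the definition of a Nijenhuis structure is satisfied by the particular pair $(N, N^*)$ exactly when $N$ is a Nijenhuis tensor; this is what makes the Nijenhuis-structure half of the $\mathcal{ON}$-data equivalent to the Nijenhuis-tensor half of the PN-data, and it is precisely the content of the fact recorded after Definition \ref{defn-nij-str}. Everything else is a direct substitution.
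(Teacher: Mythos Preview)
Your argument is correct and is exactly the dictionary the paper has in mind: the proposition is stated in the paper without proof, as an immediate consequence of the way the definitions are arranged (Lemma~\ref{lemma-r-mat} for the r-matrix/$\mathcal{O}$-operator correspondence, and the remark after Definition~\ref{defn-nij-str} for the $(N,N^*)$ Nijenhuis-structure fact). Your only nontrivial observation---that identity~(\ref{nij-second}) for the pair $(N,N^*)$ on the coadjoint module is literally the Nijenhuis condition for $N$, so the ``Nijenhuis structure'' and ``Nijenhuis tensor'' data match exactly---is precisely what that remark encodes, and the rest is, as you say, direct substitution.
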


Thus, by Theorem \ref{nt}, Proposition \ref{nt-com} and Theorem \ref{final-thm-comp-o}, we obtain the following.

\begin{corollary}
Let $({\bf r}, N)$ be a PN-structure on a Lie algebra $\mathfrak{g}$. Then ${\bf r}_N \in \wedge^2 \mathfrak{g}$ defined by $({\bf r}_N)^\sharp := N \circ {\bf r}^\sharp$ is a classical r-matrix. Moreover, the classical r-matrices ${\bf r}$ and ${\bf r}_N$ are compatible in the sense that their linear combinations are also classical r-matrices.
\end{corollary}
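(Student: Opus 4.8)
The plan is to transfer the statement to the language of $\mathcal{O}$-operators on the coadjoint representation and then simply read off the conclusion from the results already established for $\mathcal{ON}$-structures. Before that, one small preliminary check is needed: that the prescription $({\bf r}_N)^\sharp := N \circ {\bf r}^\sharp$ really produces an element of $\wedge^2\mathfrak{g}$ and not merely a linear map $\mathfrak{g}^* \to \mathfrak{g}$. Since ${\bf r} \in \wedge^2\mathfrak{g}$, the map ${\bf r}^\sharp$ satisfies $\langle {\bf r}^\sharp(\alpha), \beta \rangle = -\langle {\bf r}^\sharp(\beta), \alpha \rangle$; combining this with the first defining relation $N \circ {\bf r}^\sharp = {\bf r}^\sharp \circ N^*$ of a PN-structure and with $\langle N(v), \alpha \rangle = \langle v, N^*(\alpha)\rangle$ yields $\langle N{\bf r}^\sharp(\alpha), \beta \rangle = \langle {\bf r}^\sharp N^*(\alpha), \beta\rangle = -\langle {\bf r}^\sharp(\beta), N^*(\alpha)\rangle = -\langle N{\bf r}^\sharp(\beta), \alpha\rangle$. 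Hence $(\alpha,\beta) \mapsto \langle N{\bf r}^\sharp(\alpha),\beta\rangle$ is skew-symmetric, so there is a unique ${\bf r}_N \in \wedge^2\mathfrak{g}$ with $({\bf r}_N)^\sharp = N \circ {\bf r}^\sharp$, which is the element named in the statement.

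Next I would invoke the preceding proposition, which identifies the PN-structure $({\bf r}, N)$ on $\mathfrak{g}$ with the $\mathcal{ON}$-structure $({\bf r}^\sharp, N, N^*)$ on the coadjoint module $\mathfrak{g}^*$ over $\mathfrak{g}$. By Theorem \ref{nt}(ii), the composite $N \circ {\bf r}^\sharp = ({\bf r}_N)^\sharp$ is then an $\mathcal{O}$-operator on $\mathfrak{g}^*$ over $\mathfrak{g}$, and by Lemma \ref{lemma-r-mat} this is equivalent to ${\bf r}_N$ being a classical r-matrix. This settles the first assertion.

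For the compatibility claim, I would apply Proposition \ref{nt-com} to the same $\mathcal{ON}$-structure $({\bf r}^\sharp, N, N^*)$: it gives that ${\bf r}^\sharp$ and $N \circ {\bf r}^\sharp$ are compatible $\mathcal{O}$-operators, hence by the remark following (\ref{comp-o-equiv}) the linear combination $\mu\,{\bf r}^\sharp + \lambda\,(N\circ {\bf r}^\sharp)$ is an $\mathcal{O}$-operator for all $\mu,\lambda \in \mathbb{K}$. Since $\mu\,{\bf r}^\sharp + \lambda\,(N\circ {\bf r}^\sharp) = (\mu\,{\bf r} + \lambda\,{\bf r}_N)^\sharp$, one more application of Lemma \ref{lemma-r-mat} shows that $\mu\,{\bf r} + \lambda\,{\bf r}_N$ is a classical r-matrix, which is exactly the asserted compatibility of ${\bf r}$ and ${\bf r}_N$. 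If one wanted the whole hierarchy of pairwise compatible r-matrices ${\bf r}_k$ with $({\bf r}_k)^\sharp = N^k \circ {\bf r}^\sharp$, the identical argument using Theorem \ref{final-thm-comp-o} in place of Proposition \ref{nt-com} applies verbatim.

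The only delicate point is the very first step rather than any real difficulty: one must make sure that $N \circ {\bf r}^\sharp$ is genuinely skew (so that it equals ${\bf s}^\sharp$ for a bivector ${\bf s}$) before Lemma \ref{lemma-r-mat} can be quoted, and this is precisely where the first PN-structure relation $N \circ {\bf r}^\sharp = {\bf r}^\sharp \circ N^*$ is used. Everything after that is a chain of citations of Theorem \ref{nt}, Proposition \ref{nt-com}, the linearity remark after (\ref{comp-o-equiv}), and Lemma \ref{lemma-r-mat}.
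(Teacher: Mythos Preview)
Your proposal is correct and follows essentially the same route as the paper, which simply remarks that the corollary follows from Theorem~\ref{nt}, Proposition~\ref{nt-com} and Theorem~\ref{final-thm-comp-o} via the preceding proposition identifying $({\bf r},N)$ with the $\mathcal{ON}$-structure $({\bf r}^\sharp,N,N^*)$. Your additional verification that $N\circ{\bf r}^\sharp$ is skew (so that ${\bf r}_N$ genuinely lies in $\wedge^2\mathfrak{g}$) is a detail the paper leaves implicit, and your argument for it is fine.
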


\begin{corollary}
Let $({\bf r}, N)$ be a PN-structure on a Lie algebra $\mathfrak{g}$. Then for all $k \geq 0$, the elements ${\bf r}_k \in \wedge^2 \mathfrak{g}$ defined by $({\bf r}_k)^\sharp := N^k \circ {\bf r}^\sharp$ are classical r-matrices. Moreover, the classical r-matrices ${\bf r}_k$ and ${\bf r}_l$ are pairwise compatible.
\end{corollary}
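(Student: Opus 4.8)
The plan is to deduce the corollary from Theorem \ref{final-thm-comp-o} through the dictionary between PN-structures on $\mathfrak{g}$ and $\mathcal{ON}$-structures on the coadjoint representation. By the preceding proposition, since $({\bf r}, N)$ is a PN-structure, the triple $({\bf r}^\sharp, N, N^*)$ is an $\mathcal{ON}$-structure on $\mathfrak{g}^*$ over $\mathfrak{g}$. In the notation of Theorem \ref{final-thm-comp-o} this $\mathcal{ON}$-structure has $T = {\bf r}^\sharp$ and $S = N^*$, so that the associated operators are $T_k = N^k \circ {\bf r}^\sharp = {\bf r}^\sharp \circ (N^k)^*$, and the claim amounts to $T_k = ({\bf r}_k)^\sharp$ together with the translation back to $2$-vectors via Lemma \ref{lemma-r-mat}.

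The one point that is not purely formal is that ${\bf r}_k$ is genuinely well-defined as an element of $\wedge^2 \mathfrak{g}$, i.e. that the bilinear form $(\alpha, \beta) \mapsto \langle N^k {\bf r}^\sharp(\alpha), \beta \rangle$ on $\mathfrak{g}^*$ is skew-symmetric. From the first compatibility condition $N \circ {\bf r}^\sharp = {\bf r}^\sharp \circ N^*$ of a PN-structure, an immediate induction gives $N^k \circ {\bf r}^\sharp = {\bf r}^\sharp \circ (N^k)^*$; hence for $\alpha, \beta \in \mathfrak{g}^*$,
\begin{align*}
\langle N^k {\bf r}^\sharp(\alpha), \beta \rangle = \langle {\bf r}^\sharp((N^k)^*\alpha), \beta\rangle = {\bf r}((N^k)^*\alpha, \beta) = - {\bf r}(\beta, (N^k)^*\alpha) = -\langle {\bf r}^\sharp(\beta), (N^k)^*\alpha\rangle = -\langle N^k {\bf r}^\sharp(\beta), \alpha\rangle,
\end{align*}
using skew-symmetry of ${\bf r}$ and the definition of the adjoint $(N^k)^*$. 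So this form is skew and defines the required ${\bf r}_k \in \wedge^2\mathfrak{g}$, with $({\bf r}_k)^\sharp = N^k \circ {\bf r}^\sharp = T_k$.

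Now Theorem \ref{final-thm-comp-o} applied to the $\mathcal{ON}$-structure $({\bf r}^\sharp, N, N^*)$ yields that each $T_k = ({\bf r}_k)^\sharp$ is an $\mathcal{O}$-operator on $\mathfrak{g}^*$ over $\mathfrak{g}$ and that $T_k, T_l$ are compatible for all $k, l \geq 0$; in particular, as recorded after the definition of compatible $\mathcal{O}$-operators, every linear combination $\mu T_k + \lambda T_l$ is again an $\mathcal{O}$-operator. Since the map ${\bf s} \mapsto {\bf s}^\sharp$ is linear, $\mu T_k + \lambda T_l = (\mu {\bf r}_k + \lambda {\bf r}_l)^\sharp$, which is again the $\sharp$-map of an element of $\wedge^2\mathfrak{g}$. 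Applying Lemma \ref{lemma-r-mat} to $T_k$, and to $\mu T_k + \lambda T_l$, we conclude that each ${\bf r}_k$ is a classical r-matrix and that every linear combination $\mu {\bf r}_k + \lambda {\bf r}_l$ is a classical r-matrix, i.e. ${\bf r}_k$ and ${\bf r}_l$ are compatible.

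I do not expect any real obstacle: the corollary is an assembly of the translation proposition, Lemma \ref{lemma-r-mat}, and Theorem \ref{final-thm-comp-o}, and the only step requiring an explicit (and short) verification is the well-definedness of ${\bf r}_k$ as a $2$-vector, which as above is immediate from the intertwining identity $N \circ {\bf r}^\sharp = {\bf r}^\sharp \circ N^*$.
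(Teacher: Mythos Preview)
Your proof is correct and follows the same route as the paper: the corollary is obtained by combining the translation proposition (PN-structures correspond to $\mathcal{ON}$-structures on the coadjoint module), Theorem \ref{final-thm-comp-o}, and Lemma \ref{lemma-r-mat}. You go a bit further than the paper by explicitly verifying that $N^k \circ {\bf r}^\sharp$ really is the $\sharp$-map of a skew $2$-tensor, a point the paper leaves implicit.
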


%\subsection{$\mathcal{O}q\mathcal{N}$-structures}

%The notion of Poisson quasi-Nijenhuis manifolds (PqN manifolds) was introduced in \cite{stienon-xu} as a generalization of Poisson-Nijenhuis manifolds. PqN manifolds are related to generalized complex structures as introduced by Hitchin in \cite{hitchin}. Here we introduce an $\mathcal{O}$-operator analog of PqN manifolds.

%Let $T : M \rightarrow \mathfrak{g}$ be an $\mathcal{O}$-operator on $M$ over the Lie algebra $\mathfrak{g}$. Let $(N, S)$ be a pair of linear maps $N \in \mathfrak{gl} (\mathfrak{g}), ~S \in \mathfrak{gl} (M)$ and $\phi \in C^2_{\mathrm{CE}}(\mathfrak{g}, M) = \mathrm{Hom}(\wedge^2 \mathfrak{g}, M)$ be a closed $2$-cochain of the Lie algebra $\mathfrak{g}$ with coefficients in $M$.

%\begin{defn}
%A quadruple $(T,N, S, \phi)$ as above is said to be an $\mathcal{O}q\mathcal{N}$-structure on $M$ over the Lie algebra $\mathfrak{g}$ if
%\begin{itemize}
%\item[$\triangleright$] $T$ and $(N, S)$ are compatible,
%\item[$\triangleright$] $[Nx, Ny] = N ([Nx, y]+ [x, Ny] - N[x,y]) + T (\phi (x, y)),$
%\item[$\triangleright$] $N(x) \bullet S(m) = S (N(x) \bullet m) + x \bullet S^2 (m) - S (x \bullet S(m)) + \phi (x, T(m))$. \textcolor{red}{doubt in last term}
%\end{itemize}
%\end{defn}
%
%PqN, PN

\section{Strong Maurer-Cartan equation on a twilled Lie algebra and $\mathcal{ON}$-structures}
In this section, we construct a twilled Lie algebra from an $\mathcal{O}$-operator. Then we associate an $\mathcal{ON}$-structure to any solution of the strong Maurer-Cartan equation on that twilled Lie algebra.

Let $\mathfrak{g}$ be a Lie algebra and $\mathfrak{a, b} \subset \mathfrak{g}$ be two subspaces satisfying $\mathfrak{g} = \mathfrak{a} \oplus \mathfrak{b}.$

\begin{defn}
A triple $(\mathfrak{g}, \mathfrak{a}, \mathfrak{b})$ is called a twilled Lie algebra if $\mathfrak{a}$ and $\mathfrak{b}$ are Lie subalgebras of $\mathfrak{g}$. We also denote a twilled Lie algebra by $\mathfrak{a} \bowtie \mathfrak{b}.$
\end{defn}

Let $(\mathfrak{g}, \mathfrak{a}, \mathfrak{b})$ be a twilled Lie algebra. Then there are Lie algebra representations $\bullet_1 : \mathfrak{a} \times \mathfrak{b} \rightarrow \mathfrak{b}$ and $\bullet_2 : \mathfrak{b} \times \mathfrak{a} \rightarrow \mathfrak{a}$ given by the following decomposition
\begin{align*}
[x, u] = x \bullet_1 u - u \bullet_2 x, ~\text{ for } x \in \mathfrak{a}, u \in \mathfrak{b}.
\end{align*}
Consider the semi-direct product Lie algebra associated to the action $\bullet_2$. Let $\mu_2$ denote the corresponding multiplication map.

Note that the graded vector space $C^{* + 1}_{\mathrm{CE}} (\mathfrak{g}, \mathfrak{g} ) = \oplus_{n \geq 0} C^{n+1}_{\mathrm{CE}} (\mathfrak{g}, \mathfrak{g}) = \oplus_{n \geq 0} \mathrm{Hom}(\wedge^{n+1} \mathfrak{g}, \mathfrak{g} )$ carries a graded Lie algebra structure with the Nijenhuis-Richardson bracket $\{ P, Q \} = P \odot Q - (-1)^{pq} Q \odot P$, for $P \in C^{p+1} (\mathfrak{g}, \mathfrak{g})$, $Q \in C^{q+1} (\mathfrak{g}, \mathfrak{g})$, where $P \odot Q$ is given by
\begin{align*}
(P \odot Q) (x_1, \ldots, x_{p+q+1}) = \sum_{\tau \in \mathrm{Sh}(q+1, p)} \mathrm{sgn}(\sigma)~ P ( Q (x_{\tau(1)}, \ldots, x_{\tau (q+1)}), x_{\tau (q+2)}, \ldots, x_{\tau (p+q+1)} ).
\end{align*}

Consider the graded space $C^*_{\mathrm{CE}} ( \mathfrak{a}, \mathfrak{b}) = \oplus_{n \geq 0} C^n_{\mathrm{CE}} ( \mathfrak{a}, \mathfrak{b}) = \oplus_{n \geq 0} \mathrm{Hom}( \wedge^n \mathfrak{a}, \mathfrak{b})$ with the Chevalley-Eilenberg differential $d_{\mathrm{CE}} : C^n_{\mathrm{CE}} ( \mathfrak{a}, \mathfrak{b}) \rightarrow C^{n+1}_{\mathrm{CE}} ( \mathfrak{a}, \mathfrak{b})$ for the representation of the Lie algebra $\mathfrak{a}$ on $(\mathfrak{b}, \bullet_1)$. This graded space also carries a graded Lie algebra structure via the derived bracket (see \cite{voro})
\begin{align}\label{mu-2-brckt}
[P, Q]_{\mu_2} := (-1)^{p-1} \{ \{ \mu_2, P \}, Q \}, ~\text{for } P \in C^p_{\mathrm{CE}} (\mathfrak{a}, \mathfrak{b}), ~ Q \in C^q_{\mathrm{CE}} (\mathfrak{a}, \mathfrak{b}). 
\end{align}
This two structures are compatible in the sense that $(C^*_{\mathrm{CE}} ( \mathfrak{a}, \mathfrak{b}), d_{\mathrm{CE}}, [~, ~]_{\mu_2})$ is a dgLa \cite{voro}.

\begin{defn}
Let $\mathfrak{a} \bowtie \mathfrak{b}$ be a twilled Lie algebra. An element $\Omega \in C^1_{\mathrm{CE}} ( \mathfrak{a}, \mathfrak{b})$ is called a solution of the Maurer-Cartan equation if it satisfies
\begin{align*}
d_{\mathrm{CE}} \Omega + \frac{1}{2} [\Omega, \Omega]_{\mu_2} = 0.
\end{align*}
It is called a solution of the strong Maurer-Cartan equation if $d_{\mathrm{CE}} \Omega  = \frac{1}{2} [\Omega, \Omega]_{\mu_2} = 0$.
\end{defn}

\begin{lemma}\label{char-mc-smc}
Let $\mathfrak{a} \bowtie \mathfrak{b}$ be a twilled Lie algebra and $\Omega \in  C^1_{\mathrm{CE}} ( \mathfrak{a}, \mathfrak{b}) $. Then
\begin{itemize}
\item[(i)] $\Omega$ is a solution of the Maurer-Cartan equation if and only if $\Omega$ satisfies
\begin{align}\label{mc-equiv}
[\Omega (x), \Omega(y)] + x \bullet_1 \Omega (y) - y \bullet_1 \Omega (x) = \Omega ( \Omega (x) \bullet_2 y - \Omega(y) \bullet_2 x ) + \Omega ([x, y]),~\text{for } x, y \in \mathfrak{a}. 
\end{align}
\item[(ii)] $\Omega$ is a solution of the strong Maurer-Cartan equation if and only if $\Omega$ satisfies (\ref{mc-equiv}) and
\begin{align*}
\Omega ([x, y]) = x \bullet_1 \Omega (y) - y \bullet_1 \Omega (x).
\end{align*}
\end{itemize}
\end{lemma}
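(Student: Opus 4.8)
I would prove Lemma \ref{char-mc-smc} by unwinding the definitions of the Chevalley-Eilenberg differential $d_{\mathrm{CE}}$ and the derived bracket $[~,~]_{\mu_2}$ on degree-$1$ elements, and then comparing the resulting identities term by term.

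First I would compute $d_{\mathrm{CE}} \Omega$ explicitly. Since $\Omega \in C^1_{\mathrm{CE}}(\mathfrak{a}, \mathfrak{b}) = \mathrm{Hom}(\mathfrak{a}, \mathfrak{b})$ and the relevant representation is that of $\mathfrak{a}$ on $(\mathfrak{b}, \bullet_1)$, the standard CE formula gives
\begin{align*}
(d_{\mathrm{CE}} \Omega)(x, y) = x \bullet_1 \Omega(y) - y \bullet_1 \Omega(x) - \Omega([x,y]), \quad \text{for } x, y \in \mathfrak{a}.
\end{align*}
This already identifies the second equation in part (ii): the condition $d_{\mathrm{CE}}\Omega = 0$ is exactly $\Omega([x,y]) = x \bullet_1 \Omega(y) - y \bullet_1 \Omega(x)$.

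Next I would compute $\tfrac12 [\Omega, \Omega]_{\mu_2}$. By \eqref{mu-2-brckt} with $p = q = 1$ this is $\tfrac12 \{\{\mu_2, \Omega\}, \Omega\}$, and I would evaluate this Nijenhuis-Richardson expression on a pair $x, y \in \mathfrak{a}$, keeping in mind that $\mu_2$ is the bracket of the semi-direct product Lie algebra $\mathfrak{a} \ltimes_{\bullet_2} \mathfrak{b}$ (equivalently, the whole bracket on $\mathfrak{g} = \mathfrak{a} \oplus \mathfrak{b}$ after discarding the $\mathfrak{b} \times \mathfrak{a} \to \mathfrak{b}$ component, which is the role the derived-bracket construction plays). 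Tracking the $\mathfrak{b}$-component of $\{\{\mu_2,\Omega\},\Omega\}(x,y)$, the terms that survive are $[\Omega(x),\Omega(y)]$ (the bracket inside $\mathfrak{b}$, coming from applying $\Omega$ twice and then $\mu_2$) together with the cross terms $\Omega(\Omega(x)\bullet_2 y) - \Omega(\Omega(y) \bullet_2 x)$ (from applying $\mu_2$ with one $\Omega$-argument, landing in $\mathfrak{a}$, then $\Omega$ again). So
\begin{align*}
\tfrac12 [\Omega,\Omega]_{\mu_2}(x,y) = [\Omega(x), \Omega(y)] - \Omega(\Omega(x) \bullet_2 y - \Omega(y) \bullet_2 x).
\end{align*}
Adding this to $(d_{\mathrm{CE}}\Omega)(x,y)$ and setting the sum to zero yields precisely \eqref{mc-equiv}, proving (i). For (ii), the strong MC condition asks that both summands vanish separately; given that their sum vanishes (which is \eqref{mc-equiv}), requiring $d_{\mathrm{CE}}\Omega = 0$ is equivalent to requiring $\tfrac12[\Omega,\Omega]_{\mu_2} = 0$ as well, so the strong MC equation is equivalent to \eqref{mc-equiv} together with $d_{\mathrm{CE}}\Omega = 0$, i.e. $\Omega([x,y]) = x \bullet_1 \Omega(y) - y \bullet_1 \Omega(x)$.

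**Main obstacle.** The only delicate point is the bookkeeping in evaluating $\{\{\mu_2,\Omega\},\Omega\}$ on two elements: one must correctly handle the shuffle signs in the $\odot$ operation and be careful that $\mu_2$ encodes only the action $\bullet_2$ (plus the brackets of $\mathfrak{a}$ and of $\mathfrak{b}$) while the $\bullet_1$-action appears separately through $d_{\mathrm{CE}}$. I would double-check the computation by an alternative route: apply Proposition \ref{o-char}-style reasoning, noting that $\mathrm{Gr}(\Omega) = \{(x, \Omega(x)) \mid x \in \mathfrak{a}\} \subset \mathfrak{g}$ and that the graded-Lie-algebra Maurer-Cartan equation $d_{\mathrm{CE}}\Omega + \tfrac12[\Omega,\Omega]_{\mu_2} = 0$ is equivalent to $\mathrm{Gr}(\Omega)$ being a Lie subalgebra of $\mathfrak{g} = \mathfrak{a}\bowtie\mathfrak{b}$; expanding $[(x,\Omega(x)),(y,\Omega(y))] \in \mathrm{Gr}(\Omega)$ using $[x,y] = $ ($\mathfrak{a}$-part) $+ [x,\Omega(y)] + [\Omega(x),y] + [\Omega(x),\Omega(y)]$ and the decomposition $[x,u] = x\bullet_1 u - u \bullet_2 x$ reproduces \eqref{mc-equiv} directly, which is a clean consistency check on the sign conventions. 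Once \eqref{mc-equiv} is established this way, part (ii) follows by the same separation-of-summands argument as above, using that $d_{\mathrm{CE}}\Omega$ is the ``$\wedge^2\mathfrak{a} \to \mathfrak{b}$'' antisymmetrization $x\bullet_1\Omega(y) - y\bullet_1\Omega(x) - \Omega([x,y])$.
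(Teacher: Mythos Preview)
Your proposal is correct and follows essentially the same approach as the paper: compute $(d_{\mathrm{CE}}\Omega)(x,y)$ and $[\Omega,\Omega]_{\mu_2}(x,y)$ explicitly and read off the two conditions. The paper's proof is terser (it simply states the two formulas and concludes), whereas you supply more detail on the derived-bracket computation and add the graph-subalgebra consistency check, but the substance is identical.
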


\begin{proof}
Note that
\begin{align*}
(d_{\mathrm{CE}} \Omega) (x, y) = x \bullet_1 \Omega (y) - y \bullet_1 \Omega (x) - \Omega ([x, y]).
\end{align*}
From the definition of the bracket (\ref{mu-2-brckt}), it is easy to see that
\begin{align*}
[\Omega, \Omega ]_{\mu_2} (x, y) = 2 \big( [\Omega (x), \Omega (y)] - \Omega ( \Omega (x) \bullet_2 y) + \Omega (\Omega(y) \bullet_2 x) \big).
\end{align*}
Hence the result follows from the definition of the (strong) Maurer-Cartan equation.
\end{proof}

Next, let $T : M \rightarrow \mathfrak{g}$ be an $\mathcal{O}$-operator on $M$ over the Lie algebra $\mathfrak{g}$. Consider the Lie algebra $M^T$. Then it has been shown in \cite{tang-et} that the map $\overline{\bullet} : M \times \mathfrak{g} \rightarrow \mathfrak{g}$ given by
\begin{align*}
m ~\overline{\bullet}~  x = [T(m), x] + T (x \bullet m), ~ \text{for } m \in M, x \in \mathfrak{g}
\end{align*}
defines a representation of the Lie algebra $M^T$ on the vector space $\mathfrak{g}$.

Consider the direct sum $\mathfrak{g} \oplus M^T$ with the bracket
\begin{align*}
[[ (x , m), (y , n) ]]^T = ([x, y] +  m ~\overline{\bullet}~ y - n ~\overline{\bullet}~ x ,~ x \bullet n - y \bullet m + [m, n]^T).
\end{align*}
Hence by Proposition \ref{char-mc-smc}, we have the following.
\begin{thm}\label{thm-ee}
The vector space $\mathfrak{g} \oplus M^T$ with the above bracket $[[~, ~]]^T$ is a twilled Lie algebra (denoted by $\mathfrak{g} \bowtie M^T$). A linear map $\Omega : \mathfrak{g} \rightarrow M$ is a solution of the strong Maurer-Cartan equation on the twilled Lie algebra $\mathfrak{g} \bowtie M^T$ if and only if $\Omega$ satisfies
\begin{align}
\Omega [x, y] =~&  x \bullet \Omega (y) - y \bullet \Omega (x), \label{eqn-aee}\\
[\Omega (x), \Omega (y) ]^T =~& \Omega ( \Omega (x) \overline{\bullet} y - \Omega(y) \overline{\bullet} x), ~ \text{for } x, y \in \mathfrak{g}. \label{eqn-bee}
\end{align}
\end{thm}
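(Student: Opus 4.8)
The plan is to apply Lemma \ref{char-mc-smc} to the specific twilled Lie algebra $\mathfrak{g} \bowtie M^T$, so the bulk of the work is identifying the structure maps $\bullet_1$, $\bullet_2$ of this twilled pair and then unwinding what conditions (i) and (ii) of that lemma say in this case. First I would verify that $(\mathfrak{g} \oplus M^T, [[~,~]]^T)$ is genuinely a twilled Lie algebra: the bracket $[[~,~]]^T$ restricts to the given Lie bracket on $\mathfrak{g}$ (setting $m = n = 0$) and to $[~,~]^T$ on $M^T$ (setting $x = y = 0$), so $\mathfrak{g}$ and $M^T$ are Lie subalgebras; the Jacobi identity for $[[~,~]]^T$ is exactly the content of the statement, already established in \cite{tang-et}, that $\overline{\bullet}$ is a representation of $M^T$ on $\mathfrak{g}$ together with the fact that $\bullet$ is a $\mathfrak{g}$-module structure on $M$ — one checks that the mixed bracket decomposes correctly. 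Thus $\mathfrak{a} = \mathfrak{g}$, $\mathfrak{b} = M^T$.

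Next I would read off the decomposition $[[(x,0),(0,n)]]^T = ([x, \cdot]\text{-part}, \cdot)$. From the bracket formula, $[[(x,0),(0,n)]]^T = (0 \cdot\overline{\bullet}\, x\text{ terms},\, x \bullet n) = (-\,n \,\overline{\bullet}\, x,\ x \bullet n)$ — wait, I should be careful: with $(y,n) = (0,n)$ and $(x,m)=(x,0)$ one gets $([x,0] + 0\,\overline{\bullet}\,0 - n\,\overline{\bullet}\,x,\ x\bullet n - 0 + [0,n]^T) = (-\,n\,\overline{\bullet}\,x,\ x \bullet n)$. Comparing with the general twilled formula $[x,u] = x \bullet_1 u - u \bullet_2 x$ for $x \in \mathfrak{a} = \mathfrak{g}$, $u \in \mathfrak{b} = M^T$, we identify $x \bullet_1 n = x \bullet n$ (the original $\mathfrak{g}$-action on $M$, landing in $\mathfrak{b}$) and $n \bullet_2 x = n \,\overline{\bullet}\, x$ (the representation $\overline{\bullet}$ of $M^T$ on $\mathfrak{g}$, landing in $\mathfrak{a}$). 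With these identifications, a linear map $\Omega : \mathfrak{a} \to \mathfrak{b}$, i.e. $\Omega : \mathfrak{g} \to M$, plugged into Lemma \ref{char-mc-smc}(ii), yields two equations: equation (\ref{mc-equiv}) becomes
\[
[\Omega(x), \Omega(y)]^T + x \bullet \Omega(y) - y \bullet \Omega(x) = \Omega(\Omega(x) \,\overline{\bullet}\, y - \Omega(y)\,\overline{\bullet}\, x) + \Omega([x,y]),
\]
and the extra strong condition becomes $\Omega([x,y]) = x \bullet \Omega(y) - y \bullet \Omega(x)$, which is exactly (\ref{eqn-aee}); substituting the latter into the former cancels the $x \bullet \Omega(y) - y \bullet \Omega(x)$ against $\Omega([x,y])$ and leaves precisely (\ref{eqn-bee}).

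The only genuinely non-routine point is confirming that the bracket on $M^T$ appearing inside $[[~,~]]^T$ — and hence inside the $[\Omega,\Omega]_{\mu_2}$ term of the Maurer-Cartan equation — is indeed $[~,~]^T$ and not some twisted variant, and that $\bullet_1$ really is the plain action $\bullet$ restricted appropriately; I expect the main obstacle to be the bookkeeping in matching signs and the role of $\mu_2$ (the semi-direct product multiplication for $\bullet_2 = \overline{\bullet}$) against the abstract formula (\ref{mu-2-brckt}), but this is the same computation already carried out in the proof of Lemma \ref{char-mc-smc} with $(\mathfrak{a},\mathfrak{b}) = (\mathfrak{g}, M^T)$. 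Once the dictionary $\bullet_1 \leftrightarrow \bullet$, $\bullet_2 \leftrightarrow \overline{\bullet}$ is in place, both displayed equations follow directly from Lemma \ref{char-mc-smc}(ii) by the cancellation just described, completing the proof.
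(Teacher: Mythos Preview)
Your proposal is correct and follows exactly the route the paper indicates: the paper simply writes ``Hence by Proposition \ref{char-mc-smc}, we have the following'' before stating the theorem, and you have spelled out precisely that deduction, identifying $\bullet_1 = \bullet$ and $\bullet_2 = \overline{\bullet}$ from the mixed bracket and then reading off (\ref{eqn-aee}) and (\ref{eqn-bee}) from the two conditions in Lemma \ref{char-mc-smc}(ii). There is nothing to add.
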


It follows from (\ref{eqn-bee}) that $\Omega$ is an $\mathcal{O}$-operator on the module $\mathfrak{g}$ over the Lie algebra $M^T$. Thus, $\Omega$ induces a new Lie algebra structure on $\mathfrak{g}$, denoted by $\mathfrak{g}^\Omega$, the corresponding Lie bracket $[~, ~]^\Omega$ is given by
\begin{align*}
[x, y]^\Omega = \Omega (x) ~\overline{\bullet}~ y - \Omega(y) ~\overline{\bullet}~ x, ~ \text{for } x, y \in \mathfrak{g}.
\end{align*}

This Lie algebra has a representation on $M$ given by
%\begin{align*}
$x \bullet^\Omega m = [ \Omega (x), m ]^T + \Omega ( m ~\overline{\bullet}~ x )$, for $x \in \mathfrak{g}$, $m \in M$.
%\end{align*}
Therefore, we may define a new bracket on $\mathfrak{g} \oplus M$ by
\begin{align*}
[ (x , m), (y , n)]^\Omega_T :=  (m ~\overline{\bullet}~ y - n ~\overline{\bullet}~ x + [x, y]^\Omega , x \bullet^\Omega n - y \bullet^\Omega m + [m, n]^T).
\end{align*}

\begin{thm}
Let $\Omega : \mathfrak{g} \rightarrow M$ be a solution of the strong Maurer-Cartan equation on the twilled Lie algebra $\mathfrak{g} \bowtie M^T$. Then
\begin{itemize}
\item[(i)] $(\mathfrak{g} \oplus M, [~,~]^\Omega_T)$ is a Lie algebra (we denote the corresponding twilled Lie algebra by $\mathfrak{g}^\Omega \bowtie M^T$).
\item[(ii)] $T$ is a solution of the strong Maurer-Cartan equation on the twilled Lie algebra $M^T \bowtie \mathfrak{g}^\Omega$.
\item[(iii)] $T$ is an $\mathcal{O}$-operator on the module $(M, \bullet^\Omega)$ over the Lie algebra $\mathfrak{g}^\Omega$.
\end{itemize}
\end{thm}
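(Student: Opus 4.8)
The plan is to prove (i) by a change of coordinates that turns $[~,~]^\Omega_T$ into an ordinary semi-direct product bracket, to prove (iii) by a short direct computation, and to deduce (ii) from (i) and (iii) via Lemma \ref{char-mc-smc}. Two bilinear identities that hold merely because $T$ is an $\mathcal{O}$-operator will be used throughout: the $\mathcal{O}$-operator equation $T([m,n]^T)=[T(m),T(n)]$, and its immediate consequence $m\,\overline{\bullet}\,T(n)-n\,\overline{\bullet}\,T(m)=[T(m),T(n)]$ (obtained by expanding the definition of $\overline{\bullet}$).

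For part (i), recall from the discussion preceding Theorem \ref{thm-ee} that $\overline{\bullet}$ makes the vector space $\mathfrak{g}$ into a module over the Lie algebra $M^T$; hence by (\ref{semi-dir-brkt}) the semi-direct product $M^T\ltimes\mathfrak{g}$ is a Lie algebra, whose bracket, written on $\mathfrak{g}\oplus M$ with the $\mathfrak{g}$-factor first, is $\{(x,p),(y,q)\}=(p\,\overline{\bullet}\,y-q\,\overline{\bullet}\,x,\ [p,q]^T)$. I would then check that the linear bijection $\Lambda\colon\mathfrak{g}\oplus M\to\mathfrak{g}\oplus M$, $\Lambda(x,m)=(x,m+\Omega(x))$, intertwines the two brackets, i.e. $\Lambda\bigl([(x,m),(y,n)]^\Omega_T\bigr)=\{\Lambda(x,m),\Lambda(y,n)\}$. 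Writing $m'=m+\Omega(x)$ and $n'=n+\Omega(y)$, the $\mathfrak{g}$-component of $[(x,m),(y,n)]^\Omega_T$ is at once $m\,\overline{\bullet}\,y-n\,\overline{\bullet}\,x+[x,y]^\Omega=m'\,\overline{\bullet}\,y-n'\,\overline{\bullet}\,x$; for the $M$-component one inserts the definition of $\bullet^\Omega$, rewrites $[\Omega(x),n]^T-[\Omega(y),m]^T+[m,n]^T$ as $[m',n']^T-[\Omega(x),\Omega(y)]^T$, and uses (\ref{eqn-bee}) to replace $-[\Omega(x),\Omega(y)]^T$ by $\Omega(\Omega(y)\,\overline{\bullet}\,x-\Omega(x)\,\overline{\bullet}\,y)$, arriving at $[m',n']^T-\Omega(m'\,\overline{\bullet}\,y-n'\,\overline{\bullet}\,x)$; applying $\Lambda$ then adds $\Omega$ of the (unchanged) $\mathfrak{g}$-component to the $M$-component, cancelling the $-\Omega(\cdot)$ term and leaving exactly $\{\Lambda(x,m),\Lambda(y,n)\}$. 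Thus $[~,~]^\Omega_T$ is a Lie bracket. One finishes by computing $[(x,0),(y,0)]^\Omega_T=([x,y]^\Omega,0)$ (again via (\ref{eqn-bee})) and $[(0,m),(0,n)]^\Omega_T=(0,[m,n]^T)$, so that $\mathfrak{g}\oplus 0=\mathfrak{g}^\Omega$ and $0\oplus M=M^T$ are complementary Lie subalgebras: this exhibits the twilled Lie algebra $\mathfrak{g}^\Omega\bowtie M^T$.

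For part (iii), I would expand both sides of $[T(m),T(n)]^\Omega=T(T(m)\,\bullet^\Omega\,n-T(n)\,\bullet^\Omega\,m)$ using the definitions of $[~,~]^\Omega$, $\overline{\bullet}$ and $\bullet^\Omega$. The $\mathcal{O}$-operator equation cancels the terms $[T\Omega T(m),T(n)]$ and $[T\Omega T(n),T(m)]$ common to the two sides, and the claim reduces to the vanishing of $T$ applied to $\bigl(T(n)\,\bullet\,\Omega(T(m))-T(m)\,\bullet\,\Omega(T(n))\bigr)-\Omega\bigl(n\,\overline{\bullet}\,T(m)-m\,\overline{\bullet}\,T(n)\bigr)$. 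By (\ref{eqn-aee}) the first group is $\Omega([T(n),T(m)])$, and by the second bilinear identity $n\,\overline{\bullet}\,T(m)-m\,\overline{\bullet}\,T(n)=-[T(m),T(n)]$, so the whole expression equals $\Omega([T(n),T(m)])+\Omega([T(m),T(n)])=0$. Since this equation is by definition the statement that $T$ is an $\mathcal{O}$-operator on $(M,\bullet^\Omega)$ over $\mathfrak{g}^\Omega$ (and $\bullet^\Omega$ is a representation of $\mathfrak{g}^\Omega$ on $M$ by part (i)), part (iii) is established.

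For part (ii), note that $M^T\bowtie\mathfrak{g}^\Omega$ is just the twilled Lie algebra of part (i) with its two subalgebras written in the opposite order, hence already available, and that reading off its mixed actions gives $\bullet_1=\overline{\bullet}$ (of $M^T$ on $\mathfrak{g}$) and $\bullet_2=\bullet^\Omega$ (of $\mathfrak{g}^\Omega$ on $M$). By Lemma \ref{char-mc-smc}(ii), applied to this twilled Lie algebra and the linear map $T\colon M^T\to\mathfrak{g}^\Omega$, the strong Maurer-Cartan equation for $T$ amounts to (\ref{mc-equiv}) together with $T([m,n]^T)=m\,\overline{\bullet}\,T(n)-n\,\overline{\bullet}\,T(m)$; the latter is precisely the second bilinear identity, and granting it, (\ref{mc-equiv}) collapses to the equation of part (iii), which has just been verified. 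This proves (ii). The step I expect to be the real obstacle is part (i): one has to notice that the correct model is $M^T\ltimes\mathfrak{g}$ and not some re-decomposition of the already-known $\mathfrak{g}\bowtie M^T$ — twisting the latter along the graph of $\Omega$ produces the bracket $[x,y]+[x,y]^\Omega$ on $\mathfrak{g}$, not $[x,y]^\Omega$ — and then carry the $M$-component bookkeeping through (\ref{eqn-bee}) (and, for part (iii), (\ref{eqn-aee})) without sign slips. Everything after that is short.
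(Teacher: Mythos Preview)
Your proof is correct. The computations in parts (ii) and (iii) are essentially the paper's own: the paper verifies directly the two identities $T([m,n]^T)=m\,\overline{\bullet}\,T(n)-n\,\overline{\bullet}\,T(m)$ and $[T(m),T(n)]^\Omega=T(T(m)\bullet^\Omega n-T(n)\bullet^\Omega m)$, the second of which is precisely your part (iii); you reorganize this by proving (iii) first and then quoting Lemma~\ref{char-mc-smc} for (ii), which is a clean way to package the same calculation.

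The genuine difference is in part (i). The paper dispatches it in one line, ``direct verification using Theorem~\ref{thm-ee}'': since (\ref{eqn-bee}) makes $\Omega$ an $\mathcal{O}$-operator on $\mathfrak{g}$ over $M^T$, the bracket $[~,~]^\Omega_T$ is exactly the construction $[[~,~]]^T$ of Theorem~\ref{thm-ee} with the roles of $(\mathfrak{g},T)$ and $(M^T,\Omega)$ interchanged, so the Jacobi identity and the twilled decomposition come for free by symmetry. Your route is instead to produce an explicit Lie algebra isomorphism $\Lambda(x,m)=(x,m+\Omega(x))$ onto the semi-direct product $M^T\ltimes\mathfrak{g}$, using only (\ref{eqn-bee}); this is a nice self-contained argument that avoids any appeal to the matched-pair construction and makes transparent why (\ref{eqn-aee}) is not needed for (i). The paper's approach, on the other hand, makes the $T\leftrightarrow\Omega$ duality visible at the structural level. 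One small remark: your parenthetical ``again via (\ref{eqn-bee})'' for $[(x,0),(y,0)]^\Omega_T=([x,y]^\Omega,0)$ is unnecessary, since $x\bullet^\Omega 0=0$ holds trivially.
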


\begin{proof} (i) It follows from a direct verification and by using Theorem \ref{thm-ee}.

(ii) To prove that $T$ is a solution of the strong Maurer-Cartan equation on the twilled Lie algebra $M^T \bowtie \mathfrak{g}^\Omega$, by Theorem \ref{thm-ee}, one needs to verify that
\begin{align*}
T ([m,n]^T) = m ~\overline{\bullet}~ T(n) - n ~\overline{\bullet}~ T(m) ~~~ \text{ and } ~~~ [Tm, Tn]^\Omega = T (T(m) \bullet^\Omega n - T(n) \bullet^\Omega m).
\end{align*}
Observe that
\begin{align*}
T ([m,n]^T) = [Tm, Tn] ~&= [Tm, Tn] + T (T(n) \bullet m) - [Tn, Tm] - T (T(m) \bullet n) \\
~&= m ~\overline{\bullet}~ T(n) - n ~\overline{\bullet}~ T(m).
\end{align*}
On the other hand,
\begin{align*}
[Tm, Tn]^\Omega =~& \Omega T(m) ~\overline{\bullet}~ T(n) - \Omega T(n) ~\overline{\bullet}~ T(m) \\
=~& [T \Omega T (m), T(n)] + T ( T(n) \bullet \Omega T(m)) - [ T \Omega T (n), T(m) ] - T ( T(m) \bullet \Omega T(n)) \\
=~& [ T \Omega T (m), T(n) ] - [ T \Omega T (n), T(m) ] - T \Omega [ T(m), T(n) ] \quad (\text{by } (\ref{eqn-aee})) \\
=~& [ T \Omega T (m), T(n) ] - [ T \Omega T (n), T(m)]  - T \Omega ( m \overline{\bullet} T(n) - n \overline{\bullet} T(m)) \\
=~& T \big( [ \Omega T (m), n]^T + \Omega (n ~\overline{\bullet}~ T(m)) - [ \Omega T (n), m]^T - \Omega ( m ~\overline{\bullet}~ T(n)) \big) \\
=~& T ( T(m) \bullet^\Omega n - T(n) \bullet^\Omega m ).
\end{align*}

(iii) By a direct calculation, one can show that
\begin{align*}
T ( T(m) \bullet^\Omega n - T(n) \bullet^\Omega m ) =~& T \big(  T \Omega T (m) \bullet n - T(n) \bullet \Omega T (m) + T ( T(n) \bullet \Omega T (m)) \\ & - T \Omega T(n) \bullet m + T(m) \bullet \Omega T (n) - T (T(m) \bullet \Omega T(n) ) \big).
\end{align*}
Hence
\begin{align*}
T ( T(m) \bullet^\Omega n - T(n) \bullet^\Omega m ) =~& [T \Omega T (m), T(n) ] + T (T(n) \bullet \Omega T (m)) - [ T \Omega T (n), T(m)] - T (T(m) \bullet \Omega T (n))\\
=~& \Omega T (m) ~\overline{\bullet }~ T(n) - \Omega T (n) ~\overline{\bullet}~ T(m) = [ T(m), T(n)]^\Omega .
\end{align*}
This proves that $T$ is an $\mathcal{O}$-operator on the module $(M, \bullet^\Omega)$ over $\mathfrak{g}^\Omega$.
\end{proof}

We are now in a position to prove our main result of this section.

\begin{thm}\label{final-thm-strong}
Let $T : M \rightarrow \mathfrak{g}$ be an $\mathcal{O}$-operator on a module $M$ over the Lie algebra $\mathfrak{g}$. If $\Omega : \mathfrak{g} \rightarrow M$ is a solution of the strong Maurer-Cartan equation on the twilled Lie algebra $\mathfrak{g} \bowtie M^T$ then
 $(T, N = T \circ \Omega, S = \Omega \circ T)$ is an $\mathcal{ON}$-structure on the module $M$ over the Lie algebra $\mathfrak{g}$.
%\item[(ii)] $(\Omega, S = \Omega \circ T, N = T \circ \Omega)$ is an $\mathcal{ON}$-structure on the module $(\mathfrak{g}, \overline{\bullet})$ over the Lie algebra $M^T$.
\end{thm}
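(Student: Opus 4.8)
The plan is to extract from Theorem~\ref{thm-ee} the two identities that a strong Maurer--Cartan solution $\Omega : \mathfrak{g} \to M$ on $\mathfrak{g} \bowtie M^T$ must satisfy, namely (\ref{eqn-aee}) and (\ref{eqn-bee}), and to deduce every clause of the definition of an $\mathcal{ON}$-structure from these two together with the $\mathcal{O}$-operator identity for $T$. Throughout I keep in hand the bookkeeping relations $S = \Omega \circ T$ and $N = T \circ \Omega$, whence $N \circ T = T \circ \Omega \circ T = T \circ S$, $\Omega \circ N = S \circ \Omega$, $\Omega \circ T = S$ and $S \circ \Omega \circ T = S^2$; these will be used repeatedly without further comment. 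In particular the first bullet of Definition~\ref{defn-on-str}, $N \circ T = T \circ S$, holds trivially.

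The first real step is a symmetry lemma. Expanding (\ref{eqn-bee}) by means of (\ref{eqn-aee}), of the formula $m \,\overline{\bullet}\, x = [T(m), x] + T(x \bullet m)$, and of the definition (\ref{new-brkt}) of $[~,~]^T$, and then cancelling the terms $N(x) \bullet \Omega(y) - N(y) \bullet \Omega(x)$ that occur on both sides, I expect to land on
\begin{align}\label{symm-lem}
x \bullet S\Omega(y) - S\big(x \bullet \Omega(y)\big) = y \bullet S\Omega(x) - S\big(y \bullet \Omega(x)\big), \qquad x, y \in \mathfrak{g}.
\end{align}
This identity is the engine of the proof; its specialization at $y = T(m)$ (so that $\Omega(y) = S(m)$ and $S\Omega(y) = S^2(m)$), namely
\begin{align}\label{symm-lem-sub}
x \bullet S^2(m) - S\big(x \bullet S(m)\big) = T(m) \bullet S\Omega(x) - S\big(T(m) \bullet \Omega(x)\big), \qquad x \in \mathfrak{g},\ m \in M,
\end{align}
will be needed too.

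Next I would verify that $(N, S)$ is a Nijenhuis structure on $M$ over $\mathfrak{g}$, i.e. that $N$ is a Nijenhuis operator on $\mathfrak{g}$ and that (\ref{nij-second}) holds. For the former, apply the $\mathcal{O}$-operator identity for $T$ to $[Nx, Ny] = [T\Omega(x), T\Omega(y)]$, expand $N\big([Nx,y] + [x,Ny] - N[x,y]\big) = T\Omega\big([Nx,y] + [x,Ny] - N[x,y]\big)$ using (\ref{eqn-aee}) and $\Omega \circ N = S \circ \Omega$, and observe that the two sides agree after a single application of (\ref{symm-lem}). For (\ref{nij-second}), use (\ref{symm-lem-sub}) to rewrite the combination $x \bullet S^2(m) - S(x \bullet S(m))$; then (\ref{nij-second}) becomes equivalent to $N(x) \bullet S(m) - T(m) \bullet S\Omega(x) = S(N(x) \bullet m) - S(T(m) \bullet \Omega(x))$, and I expect both sides to equal $\Omega[T\Omega(x), T(m)]$ --- the left-hand side directly by (\ref{eqn-aee}) applied to the pair $(T\Omega(x), T(m))$, and the right-hand side because it equals $S\big([\Omega(x), m]^T\big) = \Omega\big(T([\Omega(x), m]^T)\big) = \Omega[T\Omega(x), T(m)]$ by the $\mathcal{O}$-operator identity for $T$.

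Finally, for the second bullet of Definition~\ref{defn-on-str}, expand both $[m,n]^{N \circ T}$ and $[m,n]^T_S$; using $N \circ T = T \circ S$, the desired equality $[m,n]^{N \circ T} = [m,n]^T_S$ reduces to $S([m,n]^T) = T(m) \bullet S(n) - T(n) \bullet S(m)$, which is immediate: $S([m,n]^T) = \Omega\big(T([m,n]^T)\big) = \Omega([T(m), T(n)])$ by the $\mathcal{O}$-operator identity for $T$, and then (\ref{eqn-aee}) applied to the pair $(T(m), T(n))$ produces exactly the right-hand side. Assembling these facts shows that $(T, N = T \circ \Omega, S = \Omega \circ T)$ is an $\mathcal{ON}$-structure. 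I anticipate the main obstacle to be the verification of (\ref{nij-second}): one must juggle the compositions $\Omega \circ N = S \circ \Omega$ and $\Omega \circ T = S$ with care so that the substitution $y = T(m)$ in (\ref{symm-lem}) yields precisely the combination appearing in (\ref{nij-second}), and every step must be carried out without assuming $T$ or $\Omega$ invertible (or even surjective). The remaining manipulations are routine expansions.
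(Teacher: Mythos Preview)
Your proposal is correct and follows essentially the same approach as the paper: both arguments rest on the two identities (\ref{eqn-aee}) and (\ref{eqn-bee}) together with the $\mathcal{O}$-operator relation for $T$, deriving the Nijenhuis property of $N=T\Omega$, the compatibility (\ref{nij-second}) via a substitution $y=T(n)$, and the equality $[m,n]^T_S=[m,n]^{N\circ T}$ from the consequence $S([m,n]^T)=T(m)\bullet S(n)-T(n)\bullet S(m)$ of (\ref{eqn-aee}). The only cosmetic difference is that you first isolate the symmetric identity (\ref{symm-lem}) and reuse it, whereas the paper carries out the equivalent manipulations inline (its (\ref{star}) and (\ref{double-star}) play the role of your (\ref{symm-lem-sub})).
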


\begin{proof}
For any $x, y \in \mathfrak{g}$, we have
\begin{align*}
[T \Omega (x), T \Omega (y)] =~& T ([\Omega x, \Omega y]^T) \\
=~& T \Omega ( \Omega (x) ~\overline{\bullet}~ y - \Omega (y ) ~\overline{\bullet}~ x) ~~~ (\text{by } (\ref{eqn-bee})) \\
=~& T \Omega ( [ T \Omega (x), y] + T ( y \bullet \Omega (x)) + [x, T\Omega (y)] - T (x \bullet \Omega (y)) ) \\
=~& T \Omega ( [ T \Omega (x), y] + [x, T\Omega (y)] - T \Omega [x, y]) ~~~ (\text{by } (\ref{eqn-aee})).
\end{align*}
This shows that $N = T \Omega$ is a Nijenhuis tensor on $\mathfrak{g}$. We will now show that $(N, S)$ is a Nijenhuis structure on the module $M$. First observe that
\begin{align}
\Omega T ( T(m) \bullet n - T(n) \bullet m) = \Omega [ Tm, Tn] = T(m) \bullet \Omega Tn - T(n) \bullet \Omega T(m) ~~~ (\text{by } (\ref{eqn-aee})). \label{star}
\end{align}
On the other hand, by taking $y = T(n)$ in (\ref{eqn-bee}), we get
\begin{align}
[ \Omega x, \Omega T(n)]^T =~& \Omega \big( [T \Omega (x), T(n)] + T ( T(n) \bullet \Omega (x)) - [T \Omega T (n), x] - T ( x \bullet \Omega T (n))   \big), \nonumber \\
\text{or,} ~~~T \Omega (x) \bullet \Omega T(n) - T \Omega T(n) \bullet \Omega (x) =~& T \Omega (x) \bullet \Omega T(n) - T(n) \bullet \Omega T \Omega(x) + \Omega T ( T(n) \bullet \Omega(x)) \nonumber \\
~&-T \Omega T(n) \bullet \Omega (x) + x \bullet \Omega T \Omega T (n) - \Omega T (x \bullet \Omega T(n)). \label{double-star}
\end{align}
By (\ref{star}) and (\ref{double-star}), we get
\begin{align*}
T \Omega (x) \bullet \Omega T(n) - \Omega T ( T \Omega (x) \bullet n ) = x \bullet \Omega T \Omega T (n) - \Omega T ( x \bullet \Omega T(n)).
\end{align*}
This is just equation (\ref{nij-second}) for $N = T \Omega$ and $S = \Omega T$. Therefore $(N, S)$ is a Nijenhuis structure on the $\mathfrak{g}$-module $M$.

We also have $T \circ S = N \circ T = T \circ \Omega \circ T$. Finally, a direct calculation shows that
\begin{align*}
[m, n]^T_S - [m,n]^{T \circ S} =~& T (m ) \bullet S(n) - T(n) \bullet S(m) - S ( T(m) \bullet n - T(n) \bullet m) \\
=~& T(m) \bullet \Omega T(n) - T(n) \bullet \Omega T(m) - \Omega T  ( T(m) \bullet n - T(n) \bullet m) = 0 ~~(\text{by } (\ref{star})).
\end{align*}
Hence $(T, N, S)$ is a Nijenhuis structure on $M$ over $\mathfrak{g}$.
\end{proof}

Thus, in view of Theorem \ref{nt}, Proposition \ref{nt-com} and Theorem \ref{final-thm-comp-o}, we get the following.

\begin{corollary}\label{cor-strong-mc}
Let $T$ be an $\mathcal{O}$-operator on $M$ over the Lie algebra $\mathfrak{g}$. If $\Omega : \mathfrak{g} \rightarrow M$ is a solution of the strong Maurer-Cartan equation on the twilled Lie algebra $\mathfrak{g} \bowtie M^T$, then for all $k \geq 0$, $T_k := (T \circ \Omega)^k \circ T$ are $\mathcal{O}$-operators on $M$ over $\mathfrak{g}$ and they are pairwise compatible.
\end{corollary}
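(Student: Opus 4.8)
The plan is to obtain the corollary purely by assembling the structural results already proved, so the proof is essentially a matter of invoking them in the right order. First I would feed the hypothesis into Theorem \ref{final-thm-strong}: since $\Omega : \mathfrak{g} \rightarrow M$ solves the strong Maurer-Cartan equation on the twilled Lie algebra $\mathfrak{g} \bowtie M^T$, the triple $(T, N, S)$ with $N = T \circ \Omega \in \mathrm{End}(\mathfrak{g})$ and $S = \Omega \circ T \in \mathrm{End}(M)$ is an $\mathcal{ON}$-structure on $M$ over $\mathfrak{g}$. In particular the two defining relations of Definition \ref{defn-on-str}, namely $N \circ T = T \circ S$ and $[m,n]^{N \circ T} = [m,n]^T_S$, are at our disposal.

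With the $\mathcal{ON}$-structure established, I would then apply Theorem \ref{final-thm-comp-o} verbatim: for every $k \geq 0$ the linear map $N^k \circ T = T \circ S^k$ is an $\mathcal{O}$-operator on $M$ over $\mathfrak{g}$, and for all $k, l \geq 0$ the $\mathcal{O}$-operators $N^k \circ T$ and $N^l \circ T$ are compatible, so that every finite linear combination of them is again an $\mathcal{O}$-operator. It remains only to recognize that the map $T_k := (T \circ \Omega)^k \circ T$ named in the statement is exactly $N^k \circ T$, which is immediate from $N = T \circ \Omega$. (Equivalently one may present $T_k$ as $T \circ S^k = T \circ (\Omega \circ T)^k$; the identification $N^k \circ T = T \circ S^k$ follows from $TS = NT$ by a one-line induction, $T S^{k} = (TS) S^{k-1} = (NT) S^{k-1} = N (T S^{k-1}) = \cdots = N^k T$.)

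The main "obstacle" is therefore not a real one: it is just the bookkeeping that matches the notation $(T\circ\Omega)^k\circ T$ of the corollary with the notation $N^k\circ T$ used in Theorem \ref{final-thm-comp-o}. I would also remark for the reader that Theorem \ref{nt}(ii) and Proposition \ref{nt-com} correspond to the cases $k=1$ (that $N\circ T$ is an $\mathcal{O}$-operator, and that it is compatible with $T$), but since Theorem \ref{final-thm-comp-o} already gives the full hierarchy and all pairwise compatibilities, citing it alone suffices and no further computation is needed.
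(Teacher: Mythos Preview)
Your proof is correct and follows essentially the same route as the paper: invoke Theorem \ref{final-thm-strong} to obtain the $\mathcal{ON}$-structure $(T,\,T\circ\Omega,\,\Omega\circ T)$, and then apply Theorem \ref{final-thm-comp-o} (the paper additionally cites Theorem \ref{nt} and Proposition \ref{nt-com}, but as you correctly note these are subsumed by Theorem \ref{final-thm-comp-o}). Your identification $(T\circ\Omega)^k\circ T = N^k\circ T$ is the only bookkeeping needed, and your induction using $TS=NT$ handles it cleanly.
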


In the above theorem, we show that given an $\mathcal{O}$-operator $T$, a solution $\Omega$ of the strong Maurer-Cartan equation on the twilled Lie algebra $\mathfrak{g} \bowtie M^T$ leads to an $\mathcal{ON}$-structure $(T, N = T \circ \Omega, S = \Omega \circ T)$. The converse of the above theorem holds true provided $T$ is invertible. 

\begin{thm}\label{thm-strong-last}
Let $(T, N, S)$ be an $\mathcal{ON}$-structure on $M$ over the Lie algebra $\mathfrak{g}$, in which $T$ is invertible. Then $\Omega := T^{-1} \circ N = S \circ T^{-1} : \mathfrak{g} \rightarrow M$ is a solution of the strong Maurer-Cartan equation on the twilled Lie algebra $\mathfrak{g} \bowtie M^T$.
\end{thm}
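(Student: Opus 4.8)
The plan is to combine Theorem~\ref{thm-ee} with the invertibility of $T$. By Theorem~\ref{thm-ee}, $\Omega$ solves the strong Maurer--Cartan equation on $\mathfrak{g}\bowtie M^T$ exactly when it satisfies (\ref{eqn-aee}) and (\ref{eqn-bee}), and since $T\colon M\to\mathfrak{g}$ is bijective it is enough to check each identity on arguments $x=T(m)$, $y=T(n)$. First I would record the two bookkeeping facts $T\circ\Omega=N$ and $\Omega\circ T=S$, both immediate from $\Omega=T^{-1}\circ N=S\circ T^{-1}$ and the first $\mathcal{ON}$-condition $N\circ T=T\circ S$; these turn every expression into one involving only $S$, the action $\bullet$, and the bracket $[~,~]^T$. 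I would also isolate the identity $(\star)$: $S\big(T(m)\bullet n-T(n)\bullet m\big)=T(m)\bullet S(n)-T(n)\bullet S(m)$ for all $m,n\in M$, valid for any $\mathcal{ON}$-structure because it is the relation $[m,n]^T_\sim=[m,n]^T_S$ noted after Definition~\ref{defn-on-str}, written out via $x\,\widetilde{\bullet}\,m=N(x)\bullet m-x\bullet S(m)+S(x\bullet m)$ together with $N\circ T=T\circ S$.

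For (\ref{eqn-aee}): after the substitution, and using that $T$ is an $\mathcal{O}$-operator, the left side is $\Omega(T[m,n]^T)=(\Omega\circ T)[m,n]^T=S([m,n]^T)$ and the right side is $T(m)\bullet S(n)-T(n)\bullet S(m)$, so (\ref{eqn-aee}) is literally $(\star)$. For (\ref{eqn-bee}): the left side is $[S(m),S(n)]^T$. For the right side I would expand $\Omega(x)\,\overline{\bullet}\,y$ and $\Omega(y)\,\overline{\bullet}\,x$ from $m\,\overline{\bullet}\,x=[T(m),x]+T(x\bullet m)$ using $T\circ\Omega=N$, rewrite the difference $\Omega(x)\,\overline{\bullet}\,y-\Omega(y)\,\overline{\bullet}\,x$ as $T$ of an element of $M$ via the $\mathcal{O}$-operator identity for $T$, and then apply $\Omega=S\circ T^{-1}$; after the cancellations the right side becomes $S\big(T(Sm)\bullet n-T(Sn)\bullet m\big)=S\big([m,n]^{N\circ T}\big)$. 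By (\ref{first-kl}) with $k=1$, $l=0$ we have $[m,n]^{N\circ T}=[m,n]^{T_1}=S([m,n]^T)$, so the right side equals $S^2([m,n]^T)$. Thus (\ref{eqn-bee}) reduces to the single identity $[S(m),S(n)]^T=S^2([m,n]^T)$.

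Proving this last identity is the step I expect to be the main obstacle. I would apply (\ref{nij-second}) once with $x=T(m)$, $p=n$ and once with $x=T(n)$, $p=m$, subtract, and use $(\star)$ and $[m,n]^{N\circ T}=S([m,n]^T)$ to simplify the $S(\,\cdot\,)$-terms; this gives $[S(m),S(n)]^T=T(m)\bullet S^2(n)-T(n)\bullet S^2(m)$. On the other hand, $(T,N^2,S^2)$ is again an $\mathcal{ON}$-structure --- $N^2\circ T=T\circ S^2$ follows from $N\circ T=T\circ S$, $(N^2,S^2)$ is a Nijenhuis structure by the earlier Proposition on powers, $N^2\circ T=T_2$ is an $\mathcal{O}$-operator by Theorem~\ref{final-thm-comp-o}, and $[m,n]^{N^2\circ T}=[m,n]^{T_2}=[m,n]^T_{S^2}$ is (\ref{first-kl}) --- so the same manipulation that produced $(\star)$ gives here $S^2([m,n]^T)=T(m)\bullet S^2(n)-T(n)\bullet S^2(m)$, and comparison of the two displays finishes the proof. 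The subtle points are the cancellations in the reduction of (\ref{eqn-bee}) and, above all, recognizing that one must pass to the square $(T,N^2,S^2)$ --- equivalently invoke the $k=2$ case of (\ref{first-kl}) --- since $[S(m),S(n)]^T=S^2([m,n]^T)$ is not formally forced by $(\star)$ for $(T,N,S)$ alone.
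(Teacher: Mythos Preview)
Your argument is correct, and for (\ref{eqn-aee}) it is exactly the paper's proof: both derive the identity $S[m,n]^T = T(m)\bullet S(n)-T(n)\bullet S(m)$ from the $\mathcal{ON}$-condition $[m,n]^{N\circ T}=[m,n]^T_S$ and then substitute $x=T(m)$, $y=T(n)$.

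For (\ref{eqn-bee}), however, you take a substantially longer route than the paper. The paper never passes to $(T,N^2,S^2)$ or invokes the hierarchy identities (\ref{first-kl}); it simply observes that once (\ref{eqn-aee}) is known one has
\[
\Omega(x)\,\overline{\bullet}\,y - \Omega(y)\,\overline{\bullet}\,x
=[T\Omega(x),y]+[x,T\Omega(y)]-T\Omega[x,y]=[x,y]_N,
\]
and then the Nijenhuis identity $[Nx,Ny]=N[x,y]_N$ together with $T[\Omega(x),\Omega(y)]^T=[T\Omega(x),T\Omega(y)]$ gives
$T[\Omega(x),\Omega(y)]^T = T\Omega\big(\Omega(x)\,\overline{\bullet}\,y - \Omega(y)\,\overline{\bullet}\,x\big)$,
so (\ref{eqn-bee}) follows by cancelling the invertible $T$. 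Three lines, using only the Nijenhuis property of $N$ on $\mathfrak{g}$.

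Your approach instead works entirely on the $M$-side: you reduce (\ref{eqn-bee}) to $[S(m),S(n)]^T=S^2[m,n]^T$ and establish this via (\ref{nij-second}), the identity $[m,n]^{N\circ T}=S[m,n]^T$ from (\ref{first-kl}), and the $(\star)$-identity for the squared $\mathcal{ON}$-structure $(T,N^2,S^2)$. This is valid --- everything you cite is stated in the paper --- and it has the side benefit of making explicit that $S$ behaves like a Nijenhuis operator on $M^T$. But it leans on the unproved Lemma~(\ref{first-k})--(\ref{first-kl}) and on the proposition about powers $(N^i,S^i)$, which is heavier machinery than the single line $[Nx,Ny]=N[x,y]_N$ that the paper actually needs.
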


\begin{proof}
Since $N = T \circ \Omega$ is a Nijenhuis tensor,
\begin{align*}
[T \Omega (x), T \Omega (y)] = T \Omega ( [T \Omega (x), y] + [ x, T \Omega (y)] - T \Omega [x, y]).
\end{align*}
On the other hand,
\begin{align*}
\Omega (x) ~\overline{\bullet}~ y - \Omega (y) ~ \overline{\bullet}~ x =~& [ T \Omega (x), y] + T (y \bullet \Omega (x)) - [T \Omega (y), x] - T ( x \bullet \Omega (y)) \\
=~& [ T \Omega (x), y] + [x, T \Omega (y)] - T \Omega [x, y].
\end{align*}
Since $T$ is an $\mathcal{O}$-operator we have
\begin{align*}
T ([\Omega (x), \Omega (y)]^T) = [T \Omega (x), T \Omega (y)] = T \Omega ( \Omega (x) ~ \overline{\bullet}~ y - \Omega (y) ~\overline{\bullet}~ x ).
\end{align*}
Hence the equation (\ref{eqn-bee}) follows as $T$ is invertible.

On the other hand, from $[m,n]^T_S = [m, n]^{TS}$ with $S = \Omega \circ T$, we deduce that
\begin{align*}
\Omega T ( T(m) \bullet n - T(n) \bullet m) = T(m) \bullet \Omega T(n) - T(n) \bullet \Omega T(m).
\end{align*}
In other words,
\begin{align*}
\Omega [ T(m), T(n)] = T(m) \bullet \Omega T(n) - T(n) \bullet \Omega T(m).
\end{align*}
Hence the equation (\ref{eqn-aee}) follows by taking $T(m)= x$ and $T(n) = y$. Therefore, $\Omega $ is a solution of the strong Maurer-Cartan equation by Theorem \ref{thm-ee}.
\end{proof}

\section{Generalized complex structures}\label{sec-gcs}
In this section, we introduce generalized complex structures on a module over a Lie algebra. When one consider the coadjoint module $\mathfrak{g}^*$ of a Lie algebra $\mathfrak{g}$, one get generalized complex structures on $\mathfrak{g}$.

Let $\mathfrak{g}$ be a Lie algebra and $M$ be a $\mathfrak{g}$-module. Consider the semi-direct product Lie algebra $\mathfrak{g} \ltimes M$ with the bracket given in (\ref{semi-dir-brkt}). Let $J: \mathfrak{g} \oplus M \rightarrow \mathfrak{g} \oplus M$ be a linear map. Then $J$ must be of the form
\begin{align}\label{gcs-o}
J = \left( \begin{array}{cc}
N    &      T\\
\sigma     &    -S
\end{array}
\right),
\end{align}
for some linear maps $N : \mathrm{End} (\mathfrak{g}), S \in \mathrm{End} (M),~ T : M \rightarrow \mathfrak{g}$ and $\sigma : \mathfrak{g} \rightarrow M$. These linear maps are called structure components of $J$. The reason behind considering the linear map as $-S$ (instead of $S$) will be clear from Proposition \ref{gcs-o-gcs-l}.

\begin{defn}\label{defn-gcs-o}
A generalized complex structure on $M$ over the Lie algebra $\mathfrak{g}$ is a linear map $J : \mathfrak{g} \oplus M \rightarrow \mathfrak{g} \oplus M$ satisfying the following conditions
\begin{itemize}
\item[(i)] $J$ is almost complex: ~ $J^2 = - \mathrm{id}$,
\item[(ii)] integrability condition: ~
%\begin{align*}
$[Ju, Jv] - [u,v] - J ([Ju, v]+ [u, Jv]) = 0, ~ \text{for } u, v \in \mathfrak{g} \oplus M.$
%\end{align*}
\end{itemize}
\end{defn}

In \cite{crainic} Crainic gives a characterization of generalized complex manifolds. A similar theorem in our context reads as follows.

\begin{thm}\label{gcs-o-char}
A linear map $J :  \mathfrak{g} \oplus M \rightarrow \mathfrak{g} \oplus M$ of the form (\ref{gcs-o}) is a generalized complex structure on $M$ over the Lie algebra $\mathfrak{g}$ if and only if its structure components satisfy the following identities:
\begin{align}
NT =~& TS,\label{gcs-equiv-1}\\
N^2 + T \sigma =~& -\mathrm{id},\\
S \sigma =~& \sigma N, \\
S^2 + \sigma T =~& - \mathrm{id}, \label{gcs-equiv-4}\\
T ([m,n]^T) =~& [Tm, Tn], \label{gcs-equiv-5}\\
S ([m,n]^T)= ~& Tm \bullet Sn - Tn \bullet Sm, \label{gcs-equiv-6}\\
[Nx, Tm] - N [x, Tm] =~& T ( Nx \bullet m - x \bullet Sm), \label{gcs-equiv-7}\\
\sigma [Tm, x] - Tm \bullet \sigma x =~& x \bullet m + Nx \bullet Sm - S ( Nx \bullet m - x \bullet Sm), \label{gcs-equiv-8}\\
[Nx, Ny] - [x, y] - N ([Nx, y]+ [x, Ny]) =~& T ( x \bullet \sigma y - y \bullet \sigma x), \label{gcs-equiv-9}\\
Nx \bullet \sigma y - Ny \bullet \sigma x - \sigma ([Nx, y]+ [x, Ny])=~& - S (x \bullet \sigma y - y \bullet \sigma x). \label{gcs-equiv-10}
\end{align}
\end{thm}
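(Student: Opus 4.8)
The plan is to expand both conditions in Definition \ref{defn-gcs-o} in block form and read off the component equations. Write $J$ as the block matrix \eqref{gcs-o} acting on pairs $(x,m)\in\mathfrak{g}\oplus M$, so that $J(x,m)=(Nx+Tm,\ \sigma x - Sm)$. Squaring $J$ and demanding $J^2=-\mathrm{id}$ gives four linear (non-bracket) identities by comparing the four matrix entries: the $\mathfrak{g}\to\mathfrak{g}$ block yields $N^2+T\sigma=-\mathrm{id}$; the $M\to\mathfrak{g}$ block yields $NT-TS=0$, i.e.\ \eqref{gcs-equiv-1}; the $\mathfrak{g}\to M$ block yields $\sigma N - S\sigma=0$; and the $M\to M$ block yields $\sigma T+S^2=-\mathrm{id}$. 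This disposes of the ``almost complex'' part and produces \eqref{gcs-equiv-1}--\eqref{gcs-equiv-4}.

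For the integrability condition, I would introduce the Nijenhuis-type expression
\[
\mathcal{N}_J(u,v):=[Ju,Jv]-[u,v]-J\big([Ju,v]+[u,Jv]\big),\qquad u,v\in\mathfrak{g}\ltimes M,
\]
which by bilinearity and the defining property of the semidirect bracket \eqref{semi-dir-brkt} is determined by its values on the three types of arguments $(m,n)$, $(x,m)$, $(x,y)$ with $x,y\in\mathfrak{g}$, $m,n\in M$ (viewing $\mathfrak{g}$ and $M$ as subspaces of $\mathfrak{g}\oplus M$). Each such evaluation lands in $\mathfrak{g}\oplus M$, so splitting into the $\mathfrak{g}$-component and the $M$-component gives two scalar-type identities per case, hence six equations total: $\mathcal{N}_J(m,n)=0$ gives \eqref{gcs-equiv-5} and \eqref{gcs-equiv-6}; $\mathcal{N}_J(x,m)=0$ gives \eqref{gcs-equiv-7} and \eqref{gcs-equiv-8}; and $\mathcal{N}_J(x,y)=0$ gives \eqref{gcs-equiv-9} and \eqref{gcs-equiv-10}. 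In carrying this out one uses the module identity $[x,y]\bullet m = x\bullet(y\bullet m)-y\bullet(x\bullet m)$ repeatedly, together with the bracket formula $[(x,m),(y,n)]=([x,y],\,x\bullet n-y\bullet m)$, and recognizes the combination $T(m)\bullet n - T(n)\bullet m$ as $[m,n]^T$ from \eqref{new-brkt}.

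For the converse, I would simply observe that all of the above manipulations are equivalences: given the ten identities, one reassembles $J^2=-\mathrm{id}$ from \eqref{gcs-equiv-1}--\eqref{gcs-equiv-4} and reassembles $\mathcal{N}_J\equiv 0$ on all of $\mathfrak{g}\oplus M$ by bilinearity from its vanishing on the three generating types of pairs, which is exactly \eqref{gcs-equiv-5}--\eqref{gcs-equiv-10}. So the proof is a single computation run in both directions rather than two separate arguments.

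The main obstacle is purely bookkeeping: the integrability tensor $\mathcal{N}_J(x,m)$ in the mixed case $x\in\mathfrak{g}$, $m\in M$ produces a somewhat long expression whose $M$-component, after using the module axiom to rewrite $[Nx,Tm]\bullet m$-type terms, must be massaged into the stated form \eqref{gcs-equiv-8}; keeping the signs straight there (in particular the sign coming from the $-S$ entry of $J$, which is the whole point of writing $-S$ rather than $S$) is where an error is most likely to creep in. I expect no conceptual difficulty — there is no clever idea needed, only careful expansion of $[J(x,m),J(y,n)]$ and organization of the resulting terms by type — so the write-up will emphasize the organizing principle (decompose by argument type, then by target component) and relegate the term-by-term algebra to a remark that it is a ``routine but lengthy verification.''
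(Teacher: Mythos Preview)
Your proposal is correct and follows essentially the same approach as the paper: expand $J^2=-\mathrm{id}$ in block form to obtain \eqref{gcs-equiv-1}--\eqref{gcs-equiv-4}, then evaluate the integrability tensor on the three argument types $(m,n)$, $(x,m)$, $(x,y)$ and split each into its $\mathfrak{g}$- and $M$-components to obtain \eqref{gcs-equiv-5}--\eqref{gcs-equiv-10}. One small remark: the module axiom $[x,y]\bullet m = x\bullet(y\bullet m)-y\bullet(x\bullet m)$ is not actually needed in the expansion---the semidirect bracket \eqref{semi-dir-brkt} alone suffices, and no terms of the form $[Nx,Tm]\bullet m$ arise---so the bookkeeping is slightly lighter than you anticipate.
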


\begin{proof}
The condition $J^2 = - \mathrm{id}$ is same as
\begin{align*}
\left( \begin{array}{c}
N^2(x) + NT (x) + T \sigma (x) - TS (m)  \\
\sigma N (x) + \sigma T(m) - S \sigma (x) + S^2 (m)    
\end{array}
\right) = \left( \begin{array}{c}
-x  \\
-m   
\end{array}
\right).
\end{align*}
This is equivalent to the identities (\ref{gcs-equiv-1})-(\ref{gcs-equiv-4}). Next consider the integrability condition of  $J$. For $u=m, ~ v = n \in M$, we get from the integrability criteria that (\ref{gcs-equiv-5}) and (\ref{gcs-equiv-6}) holds. For $u = x \in \mathfrak{g}$ and $v = m \in M$, the integrability is equivalent to (\ref{gcs-equiv-7}) and (\ref{gcs-equiv-8}). Finally, for $u = x ,~ v = y \in \mathfrak{g}$, we get the identities (\ref{gcs-equiv-9}) and (\ref{gcs-equiv-10}).
\end{proof}

\begin{remark}
Note that the condition (\ref{gcs-equiv-5}) implies that the map $T: M \rightarrow \mathfrak{g}$ is an $\mathcal{O}$-operator on $M$ over $\mathfrak{g}$. The condition (\ref{gcs-equiv-6}) is equivalent to the fact that
\begin{align*}
\mathrm{Gr}((T,S)) = \{ ( T(m), S(m))|~ m \in M \} \subset \mathfrak{g} \oplus M
\end{align*}
is a subalgebra of the semi-direct product $\mathfrak{g} \ltimes M$.
\end{remark}

The above Theorem ensures the following examples of generalized complex structures on modules over Lie algebras.

\begin{exam} (Opposite g.c.s.) Let $J = \left( \begin{array}{cc}
N   &      T\\
\sigma     &   -S
\end{array}
\right)$ be a generalized complex structure on $M$ over $\mathfrak{g}$. Then $\overline{J} = \left( \begin{array}{cc}
N   &      - T\\
- \sigma     &   -S
\end{array}
\right)$ is also a generalized complex structure called the opposite of $J$.
\end{exam}

\begin{exam}
Let $T: M \rightarrow \mathfrak{g}$ be an invertible $\mathcal{O}$-operator on $M$ over $\mathfrak{g}$. Then $J = \left( \begin{array}{cc}
0   &      T\\
- T^{-1}     &   0
\end{array}
\right)$ is a generalized complex structure on $M$ over $\mathfrak{g}$.
\end{exam}

\begin{defn}
A complex structure on a Lie algebra $\mathfrak{g}$ is a linear map $I : \mathfrak{g} \rightarrow \mathfrak{g}$ satisfying $I^2 = - \mathrm{id}$ and
\begin{align*}
[Ix, Iy] - [x, y] - I ( [Ix, y]+[x, Iy]) = 0, ~ \text{for } x, y \in \mathfrak{g}.
\end{align*}
\end{defn}

\begin{defn}
Let $\mathfrak{g}$ be a Lie algebra and $M$ be a $\mathfrak{g}$-module. A complex structure on $M$ over the Lie algebra $\mathfrak{g}$ is a pair $(I, I_M)$ of linear maps $I \in \mathrm{End}(\mathfrak{g})$ and $I_M \in \mathrm{End}(M)$ satisfying the followings
\begin{itemize}
\item[$\triangleright$] $I$ is a complex structure on $\mathfrak{g}$,
\item[$\triangleright$] $I_M^2 = - \mathrm{id}$ and  \begin{equation}\label{iden-s}
I(x) \bullet I_M (m) - x \bullet m - I_M ( I(x) \bullet m + x \bullet I_M (m)) = 0, \text{ for }  x \in \mathfrak{g}, m \in M.
\end{equation}
\end{itemize}
\end{defn}

\begin{prop}
A pair $(I, I_M)$ is a complex structure on $M$ over the Lie algebra $\mathfrak{g}$ if and only if $I \oplus I_M : \mathfrak{g} \oplus M \rightarrow \mathfrak{g} \oplus M$ is a complex structure on the semi-direct Lie algebra $\mathfrak{g} \ltimes M$.
\end{prop}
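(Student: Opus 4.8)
The plan is to reduce the statement to a term-by-term comparison of the two sets of defining conditions, using the block-diagonal form of $I \oplus I_M$ together with the explicit semi-direct product bracket (\ref{semi-dir-brkt}).

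First I would dispose of the almost complex condition. Since $I \oplus I_M$ is block diagonal, $(I \oplus I_M)^2 = I^2 \oplus I_M^2$, so $(I \oplus I_M)^2 = -\mathrm{id}_{\mathfrak{g} \oplus M}$ holds if and only if $I^2 = -\mathrm{id}_{\mathfrak{g}}$ and $I_M^2 = -\mathrm{id}_M$. This matches the first half of each bullet in the definition of a complex structure on $M$ over $\mathfrak{g}$.

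Next I would expand the integrability condition for $J := I \oplus I_M$ on $\mathfrak{g} \ltimes M$. For $u = (x,m)$ and $v = (y,n)$ in $\mathfrak{g} \oplus M$, using (\ref{semi-dir-brkt}) and $J(x,m) = (Ix, I_M m)$, the expression $[Ju, Jv] - [u,v] - J([Ju,v]+[u,Jv])$ is a pair of elements of $\mathfrak{g}$ and $M$. Its $\mathfrak{g}$-component is $[Ix, Iy] - [x,y] - I([Ix,y] + [x,Iy])$, which involves neither $m$ nor $n$ and is precisely the integrability of $I$ as a complex structure on $\mathfrak{g}$. Its $M$-component is a sum of eight terms in $x,y,m,n$ coming from the $M$-parts of the four brackets.

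The crux of the argument is the regrouping of this $M$-component. Setting
\[
F(a,b) := I(a) \bullet I_M(b) - a \bullet b - I_M\big( I(a) \bullet b + a \bullet I_M(b) \big), \qquad a \in \mathfrak{g},\ b \in M,
\]
I would check by collecting the terms carrying $(x,n)$ and those carrying $(y,m)$ that the $M$-component equals $F(x,n) - F(y,m)$. Its vanishing for all $x,y,m,n$ is then equivalent to $F \equiv 0$: one direction is immediate, and for the converse one sets $y = 0$ and $m = 0$ and uses $F(0,0) = 0$ to conclude $F(x,n) = 0$ for all $x \in \mathfrak{g}$, $n \in M$. But $F \equiv 0$ is exactly the identity (\ref{iden-s}). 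Combining the three steps yields the claimed equivalence. The only genuine work is the careful bookkeeping in splitting the eight-term $M$-component as $F(x,n) - F(y,m)$; everything else is formal.
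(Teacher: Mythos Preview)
Your proof is correct and follows essentially the same approach as the paper's: both reduce the almost complex condition to $I^2=-\mathrm{id}$ and $I_M^2=-\mathrm{id}$ via block-diagonality, then expand the integrability condition using the semi-direct bracket and split it into its $\mathfrak{g}$- and $M$-components. Your explicit regrouping of the $M$-component as $F(x,n)-F(y,m)$ and the specialization $y=0$, $m=0$ to deduce $F\equiv 0$ is a detail the paper leaves implicit, but the underlying argument is the same.
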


\begin{proof}
The linear map $I \oplus I_M$ is a complex structure on the semi-direct product $\mathfrak{g} \ltimes M$ if and only if $(I \oplus I_M)^2 = - \mathrm{id}$ (equivalently, $I^2 = -\mathrm{id}$ and $I_M^2 = - \mathrm{id})$ and
\begin{align*}
[ (Ix, I_M m), (Iy, I_M m)] - [(x,m), (y, n)] - (I \oplus I_M ) ( [ (Ix, I_M m), (y, n)] + [(x, m), (Iy, I_M n)] ) = 0,
\end{align*}
or, equivalently, 
\begin{align*}
\big( [Ix, Iy] - [x, y] - I ([Ix, y] + [x, Iy]), ~& Ix \bullet I_M n - Iy \bullet I_M n - (x \bullet n - y \bullet m) \\ &- I_M (Ix \bullet n - y \bullet I_M m + x \bullet I_M n - Iy \bullet m) \big) = 0.
\end{align*}
The last identity is equivalent to the fact that $I$ is a complex structure on $\mathfrak{g}$ and the identity (\ref{iden-s}) holds. In other words $(I, I_M)$ is a complex structure on $M$ over the Lie algebra $\mathfrak{g}$.
\end{proof}

\begin{exam}
Let $(I, I_M)$ be a complex structure on $M$ over the Lie algebra $\mathfrak{g}$. Then $J = \left( \begin{array}{cc}
I   &      0\\
0     &   I_M
\end{array}
\right)$ is a generalized complex structure on $M$ over $\mathfrak{g}$. Note that here $S= - I_M.$
\end{exam}

%Let $J$ be a generalized complex structure on $M$ over the Lie algebra $\mathfrak{g}$. For any $1$-cocycle $B : \mathfrak{g} \rightarrow M$, we define a linear map $exp (B) = \left( \begin{array}{cc}
%\mathrm{id}    &      0\\
%B     &    \mathrm{id}
%\end{array}
%\right)$ on the direct sum $\mathfrak{g} \oplus M$. Then it is straightforward to verify that $\tau_B ( J) = exp (B) \circ J \circ exp (-B)$ is also a generalized complex structure on $M$ over the Lie algebra $\mathfrak{g}$. This follows as $exp (B)$ preserves the Lie bracket on $\mathfrak{g} \oplus M$. Motivated from generalized complex geometry, we call the generalized complex structure $\tau_B ( J)$ as the $B$-field transformation of $J$.

Next we consider generalized complex structures on Lie algebras and show that they are related to generalized complex structures on coadjoint modules.
Let $\mathfrak{g}$ be a Lie algebra. Consider the coadjoint representation $\mathfrak{g}^*$ and the corresponding semi-direct Lie algebra $\mathfrak{g} \ltimes \mathfrak{g}^*$ with the bracket
\begin{align*}
[(x, \alpha), (y, \beta)] = ([x, y], \mathrm{ad}^*_x \beta - \mathrm{ad}^*_y \alpha).
\end{align*}
The direct sum $\mathfrak{g} \oplus \mathfrak{g}^*$ also carries a non-degenerate inner product given by
%\begin{align*}
$\langle (x, \alpha), (y, \beta) \rangle = \frac{1}{2} ( \alpha (y) + \beta (x)).$
%\end{align*}
\begin{defn}
A generalized complex structure on $\mathfrak{g}$ consists of a linear map $J : \mathfrak{g} \oplus \mathfrak{g}^* \rightarrow \mathfrak{g} \oplus \mathfrak{g}^*$ satisfying
\begin{itemize}
\item[(0)] orthogonality: ~~ $\langle Ju, Jv \rangle = \langle u, v \rangle,$
\item[(i)] almost complex: ~~ $J^2 = - \mathrm{id}$,
\item[(ii)] integrability: ~~
%\begin{align*}
$[Ju, Jv ] - [u, v] - J ( [Ju, v] + [u, Jv]) = 0, ~\text{for } u, v \in \mathfrak{g} \oplus \mathfrak{g}^*.$
%\end{align*}
\end{itemize}
\end{defn}

Note that the orthogonality condition (0) implies that $J$ must be of the form 
\begin{align}\label{gcs-l}
J = \left( \begin{array}{cc}
N    &      {\bf r}^\sharp \\
\sigma_\flat    &    - N^*
\end{array}
\right),
\end{align}
for some $N \in \mathrm{End}(\mathfrak{g}), ~ {\bf r} \in \wedge^2 \mathfrak{g}$ and $\sigma \in \wedge^2 \mathfrak{g}^*$. However the conditions (i) and (ii) of the definition imposes some relations between structure components of $J$ which are listed in \cite{crainic}.

Thus, a generalized complex structure on $\mathfrak{g}$ can also be considered as a triple $(N, {\bf r}, \sigma)$ such that the linear map $J$ of the form (\ref{gcs-l}) is almost complex and satisfies the integrability condition.
If a Lie algebra $\mathfrak{g}$ admits a generalized complex structure then $\mathfrak{g}$ must be even dimensional. See \cite{gual} for the argument.
%\end{lemma}

\begin{prop}\label{gcs-o-gcs-l}
Let $\mathfrak{g}$ be a Lie algebra. A triple $(N, {\bf r}, \sigma)$  is a generalized complex structure on $\mathfrak{g}$ if and only if the linear map $J = \left( \begin{array}{cc}
N    &      {\bf r}^\sharp \\
\sigma_\flat    &    - N^*
\end{array}
\right)$ is a generalized complex structure on the coadjoint module $\mathfrak{g}^*$ over the Lie algebra $\mathfrak{g}$.
\end{prop}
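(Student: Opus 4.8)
The plan is to compare the two notions — a generalized complex structure $(N,{\bf r},\sigma)$ on $\mathfrak g$ and a generalized complex structure $J$ of the form (\ref{gcs-l}) on the coadjoint module $\mathfrak g^*$ over $\mathfrak g$ — by checking that they translate into exactly the same system of identities on the data $N\in\mathrm{End}(\mathfrak g)$, ${\bf r}\in\wedge^2\mathfrak g$, $\sigma\in\wedge^2\mathfrak g^*$. Since both definitions already fix the matrix shape of $J$ (the orthogonality condition (0) forces the block form (\ref{gcs-l}) with lower-right entry $-N^*$, lower-left $\sigma_\flat$, upper-right ${\bf r}^\sharp$, by the argument cited from \cite{crainic, gual}), the only thing to verify is that the conditions $J^2=-\mathrm{id}$ and the integrability condition agree on the two sides. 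For the module side I would invoke Theorem~\ref{gcs-o-char}: a $J$ of the form (\ref{gcs-o}) with $T={\bf r}^\sharp$, $\sigma=\sigma_\flat$, $S=N^*$ is a generalized complex structure on $\mathfrak g^*$ over $\mathfrak g$ if and only if the ten identities (\ref{gcs-equiv-1})--(\ref{gcs-equiv-10}) hold, with $M=\mathfrak g^*$ and $\bullet$ the coadjoint action.

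First I would record the dictionary: $T={\bf r}^\sharp$ is an $\mathcal O$-operator on $\mathfrak g^*$ iff ${\bf r}$ is a classical $r$-matrix (Lemma~\ref{lemma-r-mat}), so (\ref{gcs-equiv-5}) is the CYBE for ${\bf r}$; the bracket $[\alpha,\beta]^{{\bf r}^\sharp}$ on $\mathfrak g^*$ is the usual one induced by the $r$-matrix. Next I would show that with $S=N^*$ the pairing $\langle\,,\,\rangle$ on $\mathfrak g\oplus\mathfrak g^*$ is preserved by $J$ precisely because of the block structure, so condition (0) is automatic once the shape (\ref{gcs-l}) is assumed and requires no extra identity — this is why the ``orthogonality'' condition in the Lie-algebra definition and the mere ``form (\ref{gcs-o})'' assumption on the module side are matched. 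Then the almost-complex condition $J^2=-\mathrm{id}$ on each side unwinds to: $N^2+{\bf r}^\sharp\sigma_\flat=-\mathrm{id}$, $N^*\sigma_\flat=\sigma_\flat N$ (equivalently the $\wedge^2$ condition relating $\sigma$ and $N$), $N{\bf r}^\sharp={\bf r}^\sharp N^*$, and $(N^*)^2+\sigma_\flat{\bf r}^\sharp=-\mathrm{id}$; these are literally (\ref{gcs-equiv-1})--(\ref{gcs-equiv-4}) with $T={\bf r}^\sharp,S=N^*,\sigma=\sigma_\flat$. Finally I would match the integrability conditions term by term: evaluating the module integrability criterion on pairs from $M=\mathfrak g^*$, from $\mathfrak g\times\mathfrak g^*$, and from $\mathfrak g$ gives (\ref{gcs-equiv-5})--(\ref{gcs-equiv-6}), (\ref{gcs-equiv-7})--(\ref{gcs-equiv-8}), and (\ref{gcs-equiv-9})--(\ref{gcs-equiv-10}) respectively, while the Lie-algebra integrability of $J$ on $\mathfrak g\oplus\mathfrak g^*$, since the ambient bracket there is exactly the semidirect-product bracket $\mathfrak g\ltimes\mathfrak g^*$, produces the same equations; the Courant-type conditions in \cite{crainic} for $(N,{\bf r},\sigma)$ are the coordinate form of these. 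So the ``only if'' and ``if'' directions are a single biconditional once the translation table is in place.

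The main obstacle is bookkeeping rather than conceptual: one must be careful that the integrability conditions listed in \cite{crainic} for a generalized complex structure on a Lie algebra are stated in terms of $N$, the $r$-matrix ${\bf r}$, the $2$-form $\sigma$ and the Nijenhuis/Schouten-type brackets among them, and these have to be identified with the ten identities of Theorem~\ref{gcs-o-char} after substituting $T={\bf r}^\sharp$, $S=N^*$, $\sigma=\sigma_\flat$ and using $\langle\alpha\bullet x,\beta\rangle=-\langle\alpha,[x,\beta^\sharp]\rangle$-type relations to pass between operators on $\mathfrak g$ and their adjoints on $\mathfrak g^*$. In particular, identities (\ref{gcs-equiv-6}), (\ref{gcs-equiv-8}) and (\ref{gcs-equiv-10}) carry the $S=N^*$ substitution and must be dualized to become the skew-symmetry/$\wedge^2$ statements about $\sigma$ and $N$ that appear on the Lie-algebra side; checking that dualization goes through without a sign error is the delicate part. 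Once that is confirmed, the proposition follows by concatenating: $(N,{\bf r},\sigma)$ is a g.c.s. on $\mathfrak g$ $\iff$ the ten conditions hold $\iff$ (by Theorem~\ref{gcs-o-char}) $J$ is a g.c.s. on $\mathfrak g^*$ over $\mathfrak g$.
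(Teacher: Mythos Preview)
The paper states this proposition without proof, so there is nothing to compare against directly; your plan is correct and would succeed, but it is considerably more elaborate than what the situation requires.

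The essential point, which you do mention but then partially bury under the ten-identity translation via Theorem~\ref{gcs-o-char}, is that the two definitions use \emph{the same} ambient bracket. The integrability condition (ii) for a generalized complex structure on $\mathfrak{g}$ is phrased using the bracket $[(x,\alpha),(y,\beta)] = ([x,y],\mathrm{ad}^*_x\beta-\mathrm{ad}^*_y\alpha)$ on $\mathfrak{g}\oplus\mathfrak{g}^*$, while the integrability in Definition~\ref{defn-gcs-o} uses the semidirect product bracket $\mathfrak{g}\ltimes M$; for $M=\mathfrak{g}^*$ with the coadjoint action these coincide verbatim. Hence conditions (i) and (ii) are literally identical on the two sides, without any need to unpack them into (\ref{gcs-equiv-1})--(\ref{gcs-equiv-10}) and then re-match against Crainic's list. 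The only remaining issue is the orthogonality condition (0), and here your observation is the right one: a $J$ of block shape (\ref{gcs-l}) with ${\bf r}\in\wedge^2\mathfrak{g}$, $\sigma\in\wedge^2\mathfrak{g}^*$ and lower-right block $-N^*$ is automatically skew for the pairing $\langle\,,\,\rangle$, and skewness together with $J^2=-\mathrm{id}$ gives $\langle Ju,Jv\rangle=-\langle u,J^2v\rangle=\langle u,v\rangle$. So (0) is a consequence of the block form plus (i), and the biconditional follows immediately.

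In short: your proof works, but the ``bookkeeping obstacle'' you flag (dualizing (\ref{gcs-equiv-6}), (\ref{gcs-equiv-8}), (\ref{gcs-equiv-10}) and matching against \cite{crainic}) is self-imposed. You can delete the entire passage through Theorem~\ref{gcs-o-char} and simply observe that the two integrability conditions are the same equation on the same Lie algebra, and that orthogonality comes for free from the block form once $J^2=-\mathrm{id}$.
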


%\medskip

%In the classical case, there is a one-to-one correspondence between generalized complex structures whose underlying Poisson structure is non-degenerate and Hitchin pairs \cite{crainic}. We extend this correspondence in our context.

%\begin{defn}
%Let $\mathfrak{g}$ be a Lie algebra and $M$ be a $\mathfrak{g}$-module. A Hitchin structure on $M$ over the Lie algebra $\mathfrak{g}$ is a pair/triple .... \textcolor{red}{left}.
%\end{defn}

%\begin{prop}
%There is a one-to-one correspondence between generalized complex structures $J$ of the form (\ref{gcs-o}) with $T$ non-degenerate and Hitchin structures.
%\end{prop}

%\begin{prop}
%Let $J = \left( \begin{array}{cc}
%N   &      T\\
%\sigma     &   -S
%\end{array}
%\right)$ be a generalized complex structure on $M$ over $\mathfrak{g}$. Then $(T, N, S, d_{\mathrm{CE}} \sigma)$ is an $\mathcal{O}q\mathcal{N}$-structure on $M$ over $\mathfrak{g}$.
%\end{prop}

\section{Holomorphic r-matrices and holomorphic $\mathcal{O}$-operators}\label{sec-holo-o}
Let $\mathfrak{g}$ be a Lie algebra and $J : \mathfrak{g} \rightarrow \mathfrak{g}$ be a complex structure on $\mathfrak{g}$. Consider the complexified vector space $\mathfrak{g}_\mathbb{C} = \mathfrak{g} \otimes \mathbb{C}$. The map $J$ extends to $\mathfrak{g}_\mathbb{C}$ linearly by
$J ( x \otimes c ) = J(x) \otimes c$, for $x \in \mathfrak{g}$, $c \in \mathbb{C}$. Note that $J$ satisfies $J^2 = - \mathrm{id}$. Hence it has eigen values $\pm i$. The corresponding eigen spaces are
\begin{align*}
&(+i)\text{-eigen space} = \mathfrak{g}^{(1,0)} = \{ v \in \mathfrak{g}_\mathbb{C} ~|~ J(v) = + i v \} = \{ x \otimes 1 - Jx \otimes i ~|~ x \in \mathfrak{g} \}, \\
&(-i)\text{-eigen space} = \mathfrak{g}^{(0, 1)} = \{ v \in \mathfrak{g}_\mathbb{C} ~|~ J(v) = - i v \} = \{ x \otimes 1 + Jx \otimes i ~|~ x \in \mathfrak{g} \}.
\end{align*}
Note that the Lie bracket on $\mathfrak{g}$ induce induce Lie brackets on both $\mathfrak{g}^{(1,0)}$ and $\mathfrak{g}^{(0,1)}$.

\begin{defn}
A holomorphic r-matrix on a complex Lie algebra $(\mathfrak{g}, J)$ is a holomorphic bisection ${\bf r }$ (i.e. ${\bf r } \in \wedge^2 \mathfrak{g}^{(1,0)}$ such that $\overline{\partial} {\bf r} = 0 )$ satisfying $[{\bf r} , {\bf r} ] = 0$.
\end{defn}

Since $\wedge^2 \mathfrak{g}_\mathbb{C} = \wedge^2 \mathfrak{g} + i \wedge^2 \mathfrak{g}$, we may write any element ${\bf r} \in \wedge^2 \mathfrak{g}_\mathbb{C}$ by ${\bf r} = {\bf r}_R + i~ {\bf r }_I$. Here ${\bf r}_R$ and  ${\bf r }_I$ are bisections in the real Lie algebra $\mathfrak{g}$ by forgetting the complex structure. Then it has been shown in \cite{geng-st-xu} that ${\bf r } \in \wedge^2 \mathfrak{g}^{(1,0)}$ if and only if ${\bf r}_R^\sharp = {\bf r}_I^\sharp \circ J^*.$ Moreover, the proves the following.

\begin{thm}\cite[Theorem 2.7]{geng-st-xu}\label{holo-thm}
Let $(\mathfrak{g}, J)$ be a complex Lie algebra. Then the followings are equivalent:
\begin{itemize}
\item[(i)] ${\bf r } = {\bf r}_R + i~ {\bf r }_I \in \wedge^2 \mathfrak{g}^{(1,0)}$ is a holomorphic r-matrix,
\item[(ii)] $({\bf r}_I, J)$ is a PN-structure on $\mathfrak{g}$ and ${\bf r }_R^\sharp = {\bf r}^\sharp \circ J^*,$
\item[(iii)] the endomorphism $J = \left( \begin{array}{cc}
J   &      {\bf r}_I^\sharp \\
0    &    -J^*
\end{array}
\right)$ is a generalized complex structure on $\mathfrak{g}$ and ${\bf r }_R^\sharp = {\bf r}^\sharp \circ J^*.$
\end{itemize} 
\end{thm}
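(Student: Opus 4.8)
The plan is to route both equivalences through the component characterisation of generalized complex structures given by Theorem \ref{gcs-o-char} and Proposition \ref{gcs-o-gcs-l}. For the equivalence (ii)$\Leftrightarrow$(iii): by Proposition \ref{gcs-o-gcs-l} the endomorphism displayed in item (iii) is a generalized complex structure on $\mathfrak{g}$ exactly when it is one on the coadjoint module $\mathfrak{g}^*$ over $\mathfrak{g}$, so I would substitute the structure components $(N,T,\sigma,S)=(J,{\bf r}_I^\sharp,0,J^*)$ into the identities (\ref{gcs-equiv-1})--(\ref{gcs-equiv-10}). Since $J$ is assumed to be a complex structure on $\mathfrak{g}$ --- in particular $J^2=-\mathrm{id}$, $J$ is a Nijenhuis operator, and dualising its integrability makes $(J,-J^*)$ a complex structure on $\mathfrak{g}^*$ over $\mathfrak{g}$ --- the identities (\ref{gcs-equiv-2}), (\ref{gcs-equiv-3}), (\ref{gcs-equiv-4}), (\ref{gcs-equiv-8}), (\ref{gcs-equiv-9}), (\ref{gcs-equiv-10}) hold automatically, and only (\ref{gcs-equiv-1}), (\ref{gcs-equiv-5}), (\ref{gcs-equiv-6}) and (\ref{gcs-equiv-7}) remain to be analysed.

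The next step is to match these surviving identities with the data in (ii). Identity (\ref{gcs-equiv-1}), $J\circ{\bf r}_I^\sharp={\bf r}_I^\sharp\circ J^*$, is at once the first PN-structure axiom for $({\bf r}_I,J)$ and --- since it forces $J\circ{\bf r}_I^\sharp$ to be skew, defining a bivector ${\bf r}_R$ --- the statement ${\bf r}_R^\sharp={\bf r}_I^\sharp\circ J^*$, i.e. ${\bf r}={\bf r}_R+i\,{\bf r}_I\in\wedge^2\mathfrak{g}^{(1,0)}$. Identity (\ref{gcs-equiv-5}) says, via Lemma \ref{lemma-r-mat}, that ${\bf r}_I$ is a classical r-matrix, and (\ref{gcs-equiv-6}), after inserting the definition of the deformed bracket $[~,~]^{{\bf r}_I^\sharp}_{J^*}$ and using (\ref{gcs-equiv-1}), rearranges into the second PN axiom $[\alpha,\beta]^{J\circ{\bf r}_I^\sharp}=[\alpha,\beta]^{{\bf r}_I^\sharp}_{J^*}$. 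Hence (\ref{gcs-equiv-1}), (\ref{gcs-equiv-5}), (\ref{gcs-equiv-6}) together say exactly that $({\bf r}_I,J)$ is a PN-structure with ${\bf r}_R^\sharp={\bf r}_I^\sharp\circ J^*$, which is (ii); this already gives (iii)$\Rightarrow$(ii). For (ii)$\Rightarrow$(iii) one must still produce identity (\ref{gcs-equiv-7}), and I would derive it from (\ref{gcs-equiv-5}), (\ref{gcs-equiv-1}) and (\ref{gcs-equiv-6}) --- it is immediate when the free argument lies in $\mathrm{im}({\bf r}_I^\sharp)$, by the $\mathcal{O}$-operator equation together with the second PN axiom --- supplemented for the remaining arguments by the vanishing Nijenhuis torsion of $J$ and $J^2=-\mathrm{id}$.

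Finally, for (i)$\Leftrightarrow$(ii) I would pass to the complexification $\mathfrak{g}_{\mathbb{C}}=\mathfrak{g}^{(1,0)}\oplus\mathfrak{g}^{(0,1)}$, viewed as a matched pair of Lie algebras, and decompose the holomorphic r-matrix equations into real and imaginary parts. Writing ${\bf r}={\bf r}_R+i\,{\bf r}_I$ with ${\bf r}_R^\sharp={\bf r}_I^\sharp\circ J^*=J\circ{\bf r}_I^\sharp$, one has $[{\bf r},{\bf r}]=[{\bf r}_R,{\bf r}_R]-[{\bf r}_I,{\bf r}_I]+2i\,[{\bf r}_R,{\bf r}_I]$; using the classical identity that expresses the Schouten bracket of the twisted bivector ${\bf r}_N$ (with $({\bf r}_N)^\sharp=N\circ{\bf r}^\sharp$) through the Nijenhuis torsion of $N$ and the Magri--Morosi concomitant of $({\bf r},N)$, the condition $[{\bf r},{\bf r}]=0$ becomes equivalent to $[{\bf r}_I,{\bf r}_I]=0$ together with the vanishing of the concomitant of $({\bf r}_I,J)$, i.e. to identities (\ref{gcs-equiv-5}) and (\ref{gcs-equiv-6}). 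Likewise, computing $\bar\partial{\bf r}(\bar v)=\mathrm{pr}_{\wedge^2\mathfrak{g}^{(1,0)}}[\bar v,{\bf r}]$ for $\bar v\in\mathfrak{g}^{(0,1)}$ and unwinding the projections, the holomorphicity $\bar\partial{\bf r}=0$ becomes identity (\ref{gcs-equiv-7}). Combining the three steps closes the cycle (i)$\Leftrightarrow$(ii)$\Leftrightarrow$(iii). The main obstacle is this last step: correctly expanding the complexified Schouten bracket and the operator $\bar\partial$ in terms of the real pair $({\bf r}_I,J)$ and reconciling them with the Magri--Morosi concomitant and with (\ref{gcs-equiv-7}) is the one genuinely delicate computation, sensitive to the sign conventions in the isomorphisms $\mathfrak{g}\cong\mathfrak{g}^{(1,0)}$ and $\mathfrak{g}\cong\mathfrak{g}^{(0,1)}$ and in the extension of the bracket to $\wedge^\bullet\mathfrak{g}$; by comparison the reduction of the ten component identities above is essentially bookkeeping.
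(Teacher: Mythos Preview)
The paper does not supply its own proof of this statement: it is quoted from \cite[Theorem~2.7]{geng-st-xu} and used only to motivate the definition of holomorphic $\mathcal{O}$-operators that immediately follows. There is therefore no in-paper argument to compare your proposal against.

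That said, your plan is the natural one and essentially parallels how the cited source argues. For (ii)$\Leftrightarrow$(iii), routing through Theorem~\ref{gcs-o-char} and Proposition~\ref{gcs-o-gcs-l} is exactly right, and your bookkeeping --- that with $\sigma=0$ and $J$ already a complex structure the four algebraic identities coming from $J^2=-\mathrm{id}$ together with (\ref{gcs-equiv-8})--(\ref{gcs-equiv-10}) collapse automatically, while (\ref{gcs-equiv-1}), (\ref{gcs-equiv-5}), (\ref{gcs-equiv-6}) recover precisely the PN axioms and the relation ${\bf r}_R^\sharp={\bf r}_I^\sharp\circ J^*$ --- checks out. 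The one soft spot is identity (\ref{gcs-equiv-7}): your derivation is clean when $x\in\mathrm{im}({\bf r}_I^\sharp)$, but the phrase ``supplemented for the remaining arguments by the vanishing Nijenhuis torsion of $J$ and $J^2=-\mathrm{id}$'' is a promise rather than an argument. This identity is the Lie-algebra analogue of the Magri--Morosi $(2,1)$-concomitant, and showing it vanishes from the PN data plus $J^2=-\mathrm{id}$ is a genuine computation you should actually write out rather than gesture at. For (i)$\Leftrightarrow$(ii), decomposing $[{\bf r},{\bf r}]$ and $\bar\partial{\bf r}$ into real and imaginary parts under ${\bf r}_R^\sharp=J\circ{\bf r}_I^\sharp={\bf r}_I^\sharp\circ J^*$ is exactly the route taken in \cite{geng-st-xu}, and the sign-sensitive unwinding you flag as the main obstacle is indeed where the work in that reference lives.
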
 

The above characterizations of holomorphic r-matrices allows us to introduce holomorphic $\mathcal{O}$-operators.
Let $(\mathfrak{g}, J)$ be a complex Lie algebra and $(M, J_M)$ be a representation over it.

\begin{defn}
A holomorphic $\mathcal{O}$-operator on $M$ over the complex Lie algebra $(\mathfrak{g}, J)$ consists of a pair of linear maps $(T_R, T_I) : M \rightarrow \mathfrak{g}$ satisfying the properties that $(T_I, J, J_M)$ is an $\mathcal{ON}$-structure on $M$ over the Lie algebra $\mathfrak{g}$ and $T_R = T_I \circ J_M$.
\end{defn}

\begin{remark}
It follows from the above definition that both $T_R$ and $T_I$ are $\mathcal{O}$-operators in real sense and they are related by $T_R = T_I \circ J_M = J \circ T_I$.
\end{remark}

Finally, by Theorem \ref{holo-thm}, we have following.
\begin{prop}
Let $(\mathfrak{g}, J)$ be a complex Lie algebra. Then ${\bf r } = {\bf r}_R + i~ {\bf r}_I$ is a holomorphic r-matrix if and only if $({\bf r}_R^\sharp , {\bf r}_I^\sharp )$ is a holomorphic $\mathcal{O}$-operator on $\mathfrak{g}^*$ over the complex Lie algebra $(\mathfrak{g}, J).$
\end{prop}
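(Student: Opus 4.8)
The statement is a formal consequence of three biconditionals already at hand: Theorem \ref{holo-thm}, the proposition identifying PN-structures on $\mathfrak{g}$ with $\mathcal{ON}$-structures on the coadjoint module $\mathfrak{g}^*$, and the definition of a holomorphic $\mathcal{O}$-operator. So the plan is to line these up and chain them, with care needed only for matching notation; no new computation is required.

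First I would apply the equivalence (i) $\Leftrightarrow$ (ii) of Theorem \ref{holo-thm}: the element ${\bf r} = {\bf r}_R + i\,{\bf r}_I$ is a holomorphic r-matrix if and only if the pair $({\bf r}_I, J)$ is a PN-structure on $\mathfrak{g}$ and ${\bf r}_R^\sharp = {\bf r}_I^\sharp \circ J^*$ (the latter relation being, as recalled just before Theorem \ref{holo-thm}, exactly the condition that places ${\bf r}$ in $\wedge^2 \mathfrak{g}^{(1,0)}$). Next I would invoke the proposition relating PN-structures and $\mathcal{ON}$-structures, applied with $N = J$ and ${\bf r} = {\bf r}_I$: the pair $({\bf r}_I, J)$ is a PN-structure on $\mathfrak{g}$ if and only if the triple $({\bf r}_I^\sharp, J, J^*)$ is an $\mathcal{ON}$-structure on the coadjoint representation $\mathfrak{g}^*$ over the Lie algebra $\mathfrak{g}$. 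Combining these, ``${\bf r}$ is a holomorphic r-matrix'' is equivalent to ``$({\bf r}_I^\sharp, J, J^*)$ is an $\mathcal{ON}$-structure on $\mathfrak{g}^*$ over $\mathfrak{g}$ and ${\bf r}_R^\sharp = {\bf r}_I^\sharp \circ J^*$''.

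Finally I would read off the definition of a holomorphic $\mathcal{O}$-operator with $M = \mathfrak{g}^*$, $J_M = J^*$, $T_I = {\bf r}_I^\sharp$ and $T_R = {\bf r}_R^\sharp$: by definition $({\bf r}_R^\sharp, {\bf r}_I^\sharp)$ is a holomorphic $\mathcal{O}$-operator on $\mathfrak{g}^*$ over $(\mathfrak{g}, J)$ precisely when $({\bf r}_I^\sharp, J, J^*)$ is an $\mathcal{ON}$-structure on $\mathfrak{g}^*$ over $\mathfrak{g}$ and $T_R = T_I \circ J_M$, i.e.\ ${\bf r}_R^\sharp = {\bf r}_I^\sharp \circ J^*$. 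This is exactly the condition obtained in the previous step, so the two notions coincide. The only part that is not purely mechanical is the bookkeeping: one must identify the relation ``${\bf r}_R^\sharp = {\bf r}^\sharp \circ J^*$'' appearing in Theorem \ref{holo-thm} with the defining relation ``$T_R = T_I \circ J_M$'' of a holomorphic $\mathcal{O}$-operator, and confirm that the $S$-component $J^*$ of the $\mathcal{ON}$-structure $({\bf r}_I^\sharp, J, J^*)$ is precisely the complex structure $J_M = J^*$ on the coadjoint module $\mathfrak{g}^*$ used in that definition. Once this dictionary is fixed there is no further obstacle.
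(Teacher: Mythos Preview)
Your proposal is correct and matches the paper's intended argument: the paper gives no explicit proof, stating only that the proposition follows ``by Theorem \ref{holo-thm}'', and your chain of biconditionals (Theorem \ref{holo-thm}(i)$\Leftrightarrow$(ii), then the PN/$\mathcal{ON}$-correspondence proposition for the coadjoint module, then the definition of holomorphic $\mathcal{O}$-operator) is exactly how that indication unpacks. Your bookkeeping is right, including reading the relation in Theorem \ref{holo-thm} as ${\bf r}_R^\sharp = {\bf r}_I^\sharp \circ J^*$ and identifying $J_M = J^*$ on $\mathfrak{g}^*$.
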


%Holo on 

%\section{PN-structures, generalized complex structures and holomorphic r-matrices on Lie algebras}

%The quasi analog of PN-structure is given by the following.

%\begin{defn}
%A triple $({\bf r}, N, \phi)$ consisting of a classical r-matrix ${\bf r} \in \wedge^2 \mathfrak{g}$, an $(1,1)$-tensor $N : \mathfrak{g} \rightarrow \mathfrak{g}$ and a closed $3$-cochain $\phi \in \wedge^3 \mathfrak{g}^*$ (i.e. $d_{\mathrm{CE}} \phi = 0$) is called a PqN-structure on $\mathfrak{g}$ if the following conditions hold:
%\begin{itemize}
%\item[$\triangleright$] ${\bf r}, N$ are compatible,
%\item[$\triangleright$] $[Nx, Ny] - N ([Nx, y] + [x, Ny] - N[x, y]) = {\bf r}^\sharp ( i_{x \wedge y} \phi )$, ~ for $x, y \in \mathfrak{g}$.
%\end{itemize}
%\end{defn}

%Thus, in a PqN-structure, the $(1,1)$-tensor $N$ may not be Nijenhuis. However, the deviator is governed by a closed $3$-cochain $\phi$. When $\phi = 0$, we get PN-structure on $\mathfrak{g}$.

%\begin{prop}
%Let $({\bf r}, N, \phi)$ be a triple consisting of a classical r-matrix ${\bf r} \in \wedge^2 \mathfrak{g}$, an $(1,1)$-tensor $N : \mathfrak{g} \rightarrow \mathfrak{g}$ and a closed $3$-cochain $\phi \in \wedge^3 \mathfrak{g}^*$. Then $({\bf r}, N, \phi)$ is a PqN-structure on $\mathfrak{g}$ if and only if $({\bf r}^\sharp, N, N^*, \phi^\sharp)$ is an $\mathcal{O}q\mathcal{N}$-structure on the coadjoint representation $\mathfrak{g}^*$ over the Lie algebra $\mathfrak{g}$.
%\end{prop}

\medskip

%\vspace*{5cm}
\noindent {\em Acknowledgements.} The research is supported by the fellowship of Indian Institute of Technology (IIT) Kanpur. The author thanks the Institute for support.

\end{document}